\newtheorem{thm}{Theorem}
\newtheorem{con}[thm]{Conjecture}
\newtheorem{que}[thm]{Question}
\newtheorem{prop}[thm]{Proposition}
\newtheorem{obs}[thm]{Observation}
\newtheorem{fact}[thm]{Fact}
\newtheorem{cor}[thm]{Corollary}
\tikzstyle{none}=[inner sep=0pt]
\definecolor{hexcolor0xf81e1c}{rgb}{0.973,0.118,0.110}
\definecolor{hexcolor0x3c00ff}{rgb}{0.235,0.000,1.000}
\tikzstyle{whitevertex}=[circle,fill=white,draw=black, scale = 0.7]
\tikzstyle{redvertex}=[circle,fill=hexcolor0xf81e1c,draw=black, scale = 0.7]
\tikzstyle{bluevertex}=[circle,fill=hexcolor0x3c00ff,draw=black, scale = 0.7]
\tikzstyle{greenvertex}=[circle,fill=green,draw=black, scale=0.7]
\tikzstyle{purplevertex}=[circle,fill=magenta,draw=black, scale=0.7]
\tikzstyle{grayvertex}=[circle,fill=white,draw=gray, scale=0.7]
\tikzstyle{blackvertex}=[circle,fill=black,draw=black, scale=0.7]
\tikzstyle{textbox}=[rectangle,fill=none,draw=none]
\tikzstyle{box}=[rectangle,fill=none,draw=black]
\tikzstyle{grayedge}=[draw=gray]
\tikzstyle{blueedge}=[draw=blue]
\tikzstyle{rededge}=[draw=red]
\tikzstyle{edge}=[draw=black]
\tikzstyle{vertex}=[circle, ,fill=white,draw=black, scale=0.66]
\tikzstyle{10circle}=[circle, scale=10.0,draw=black] 
\tikzstyle{10oval}=[ellipse, scale=10.0,draw=black]
\title{Eternal Domination and Clique Covering}
\author{Gary MacGillivray \thanks{Funded by a Discovery Grant from the Natural Sciences and Engineering Research Council of Canada.} \thanks{Department of Mathematics and Statistics, University of Victoria, Victoria BC, Canada. \newline \indent \hspace{4pt} E-mail addresses: \texttt{gmacgill@uvic.ca, kieka@uvic.ca, virgilev@uvic.ca}} \and C. M. Mynhardt $^{* \dagger}$ \and Virgélot Virgile $^\dagger$}
\date{\today}
\begin{document}

\maketitle

\begin{abstract}
We study the relationship between the eternal domination number of a graph and its clique covering number using both large-scale computation and analytic methods. In doing so, we answer two open questions of Klostermeyer and Mynhardt. We show that the smallest graph having its eternal domination number less than its clique covering number has $10$ vertices. We determine the complete set of $10$-vertex and $11$-vertex graphs having eternal domination numbers less than their clique covering numbers. We show that the smallest triangle-free graph with this property has order $13$, as does the smallest circulant graph. We describe a method to generate an infinite family of triangle-free graphs and an infinite family of circulant graphs with eternal domination numbers less than their clique covering numbers. We also consider planar graphs and cubic graphs. Finally, we show that for any integer $k \geq 2$ there exist infinitely many graphs having domination number and eternal domination number equal to $k$ containing dominating sets which are not eternal dominating sets. \newline

\noindent \textbf{Keywords:} Dominating sets, eternal dominating sets, independent sets, clique covering, graph protection.\newline

\noindent \textbf{AMS MSC 2020}: 05C69

\end{abstract}


\section{Introduction}


The eternal domination game, played on graphs, was introduced by Burger, Cockayne, Grundlingh, Mynhardt, Van Vuuren and Winterbach \cite{burger2004infinite}. The game is played between two players that alternate turns: a defender who controls a set of guards, and an attacker. To start the game, the defender chooses a dominating set $D_0 \subseteq V$ such that $|D_0|=k$ on which to place the guards (at most one guard on each vertex). At each time $t=1, 2, 3, \ldots$ the attacker selects a vertex $v$ on which there is no guard; we say the attacker \textit{attacks} $v$. The defender responds by moving a guard on a neighbour of $v$ to $v$; we say the defender \textit{defends} $v$. The guards (or the defender) win if they are able to respond to the sequence of attacks, that is, if they can maintain a dominating set throughout the game; otherwise, the attacker wins. In other words, the attacker wins if at some time $t$ there is no guard in the neighbourhood of some vertex. The eternal domination number of a graph $G$, denoted by $\gamma^\infty(G)$, is the minimum number of guards necessary to respond to any sequence of attacks on $G$. For a survey on the eternal domination game and its variants, see \cite{klostermeyer2016protecting} or \cite{klostermeyer2020eternal}.

For graphs belonging to certain families, the eternal domination number is closely related to another well-studied parameter: the clique covering number. The \emph{clique covering number} of a graph $G$, denoted by $\theta(G)$, is the minimum cardinality of a clique covering of $G$, that is, a partition $\{V_1, V_2, \ldots, V_k\}$ of $V(G)$ such that each $V_i$ induces a clique. Observe that, as stated in \cite{burger2004infinite}, if we consider a minimum clique covering of $G$, place a guard in each clique of the covering and play the game independently in each clique, each guard is able to respond to any sequence of attacks on its respective clique. This strategy shows that the clique covering number of a graph is an upper bound on its eternal domination number. In fact, $\gamma^\infty(G)=\theta(G)$ if $G$ is a perfect graph \cite{burger2004infinite}, an outerplanar graph \cite{anderson2007maximum}, or a graph with $\theta(G) \leq 3$ \cite{burger2004infinite}; a complete characterization of the graphs $G$ with $\gamma^\infty(G)=\theta(G)$ is yet to be found. Goddard, Hedetniemi and Hedetniemi \cite{goddard2005eternal} showed that there exist graphs with $\gamma^\infty<\theta$; they gave an $11$-vertex graph as example: the complement of the Gr\"otzsch graph. Klostermeyer and Mynhardt \cite{klostermeyer2016protecting} asked whether that $11$-vertex graph is the smallest graph, in terms of the number of vertices, with this property. They further asked whether there exist graphs with $\gamma^\infty < \theta$ in some particular graph classes, for example planar graphs.

This paper is organised as follows. In Sections $2$ and $3$, we review the necessary background. In Section $4$, we describe a large-scale computation that shows, in Section $5.1$, that the smallest graph with eternal domination number less than its clique covering number has $10$ vertices and further determines the complete set of $10$-vertex and $11$-vertex graphs having eternal domination numbers less than their clique covering numbers. The computational results are supported by an analytic proof. In Section \ref{Section:GraphClasses}, we restrict our attention to triangle-free graphs, circulant graphs, planar graphs and cubic graphs. Using computation, we found that the smallest triangle-free graph with eternal domination number less than its clique covering number has $13$ vertices and that the smallest circulant graph with eternal domination number less than its clique covering number has $13$ vertices. We also consider a question (Question \ref{Conjecture:AlphaTheta}) of Klostermeyer and Mynhardt regarding triangle-free graphs. We verify that the smallest triangle-free graph with the described properties in Question \ref{Conjecture:AlphaTheta}, if it exists, has at least $15$ vertices. Our computations also show that all planar graphs on fewer than $12$ vertices, all $3$-connected planar graphs on fewer than $14$ vertices and all cubic graphs on fewer than $18$ vertices have eternal domination numbers equal to their clique covering numbers. Finally, in Section \ref{Section:DominationEternalDomination}, we consider another question (Question \ref{Question:GammaSetsGeneral}) and a conjecture (Conjecture \ref{Conjecture:GammaTheta}) of Klostermeyer and Mynhardt. We show that the answer to Question \ref{Question:GammaSetsGeneral} is no and verify Conjecture \ref{Conjecture:GammaTheta} for all graphs up to order $11$.


\section{Definitions}
The \emph{domination number} of a graph $G$, denoted by $\gamma(G)$, is the minimum cardinality of a dominating set of $G$, that is, a set $D \subseteq V(G)$ such that any vertex $v \in V(G) \backslash D$ has a neighbour in $D$. This means that, in the eternal domination game, the guards must always be located on the vertices of a dominating set of $G$ in order to defend a sequence of attack on $G$. A dominating set from which the guards can defend any sequence of attack is known as an \textit{eternal dominating set}.

The \emph{independence number} of $G$, denoted by $\alpha(G)$, is the maximum cardinality of an independent set of $G$, that is, a set $S \subseteq V(G)$ such that for any $u, v \in S$, $uv \not\in E(G)$. The \emph{clique number} of $G$, denoted by $\omega(G)$, is the maximum cardinality of a clique of $G$, where a clique is the complement of an independent set. This implies that $\alpha(G)=\omega(\overline{G})$ for any graph $G$ and its complement $\overline{G}$.

The \emph{chromatic number of $G$}, denoted by $\chi(G)$, is the minimum number of colours required to colour the vertices of $G$ so that no two adjacent vertices have the same colour. Observe that such a colouring with $k$ colours is obtained by partitionning $V(G)$ into $\{V_1, V_2, \ldots, V_k\}$ where each $V_i$ induces an independent set. For this reason, the clique covering number of a graph is equal to the chromatic number of its complement.	When the graph $G$ is clear from context, we use $n, \alpha, \gamma^\infty, \theta$ to denote respectively $|V(G)|$, $\alpha(G)$, $\gamma^\infty(G)$, $\theta(G)$.

A graph $G$ is said to be \emph{vertex critical} with respect to $\theta$ if $\theta(G-\{v\})=\theta(G)-1$ for any vertex $v \in V(G)$. A graph $G$ is said to be \emph{edge critical} with respect to $\theta$ if $\theta(G+\{uv\})=\theta(G)-1$ for any edge $uv \not\in E(G)$, where $G+\{uv\}$ is the graph obtained from $G$ by adding the missing edge $uv$. Since we only consider criticality with respect to $\theta$, we simply refer to $G$ being vertex or edge critical without explicitly referring to $\theta$. We say that a graph $G$ is \textit{critical} if $G$ is vertex critical and edge critical.

The \emph{circulant graph} $C_n[k_1, k_2, \ldots, k_l]$, where $\{k_1, k_2, \ldots, k_l\} \subseteq \mathbb{Z}^+$ and $1 \leq k_1 < k_2 < \ldots < k_l \leq \lfloor \frac{n}{2} \rfloor$, is the graph with vertex set $\{v_0, v_1, v_2, \ldots, v_{n-1}\}$ such that two vertices $v_i$ and $v_j$ are adjacent if and only if $i-j \equiv \pm k_p \pmod n$ for some $p \in \{1, 2, \ldots, l\}$. A \emph{perfect graph} is a graph such that the chromatic number of any of its induced subgraphs is equal to the clique number of that subgraph.

The \emph{bow tie product} of a graph $G$ with a graph $H$, denoted by $G \bowtie H$, is the graph with vertex set $\{(v_i, v_j): v_i \in V(G), v_j \in V(H)\}$, where two vertices $(v_i, v_j)$ and $(v_i', v_j')$ are adjacent if and only if one of the following conditions holds: $v_i v_i' \in E(G)$ and $v_j = v_j'$, or $v_i v_i' \in E(G)$ and $v_j v_j' \in E(H)$ (see Figure \ref{Figure:BowtieProduct}). As shown in Figure \ref{Figure:BowtieProduct}, $G \bowtie H \ncong H \bowtie G$ in general.
	
	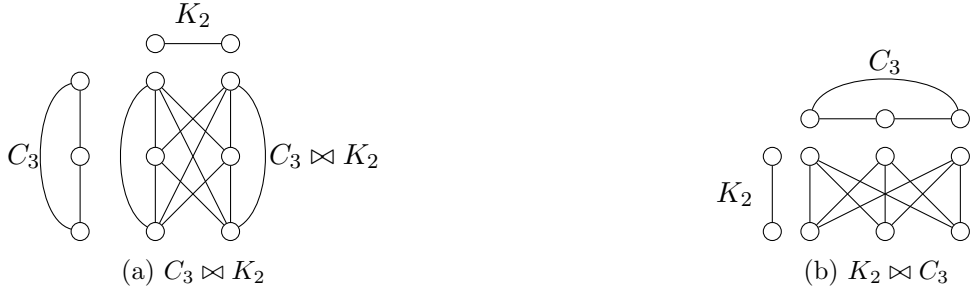
\begin{figure}[h!]
		\centering
		\begin{subfigure}[b]{0.45\textwidth}
			\centering
			\begin{tikzpicture}
				\begin{pgfonlayer}{nodelayer}
					\node [style=vertex] (0) at (-0.5, 1.5) {};
					\node [style=vertex] (1) at (0.5, 1.5) {};
					\node [style=vertex] (2) at (-1.5, 1) {};
					\node [style=vertex] (3) at (-1.5, 0) {};
					\node [style=vertex] (4) at (-1.5, -1) {};
					\node [style=vertex] (6) at (-0.5, 1) {};
					\node [style=vertex] (7) at (-0.5, 0) {};
					\node [style=vertex] (8) at (-0.5, -1) {};
					\node [style=vertex] (10) at (0.5, 1) {};
					\node [style=vertex] (11) at (0.5, 0) {};
					\node [style=vertex] (12) at (0.5, -1) {};
					\node [style=none] (16) at (-2.25, 0) {$C_3$};
					\node [style=none] (17) at (0, 1.875) {$K_2$};
					\node [style=none] (18) at (1.75, 0) {$C_3 \bowtie K_2$};
					\node [style=none] (19) at (2.25, 0) {};
				\end{pgfonlayer}
				\begin{pgfonlayer}{edgelayer}
					\draw (2) to (3);
					\draw (3) to (4);
					\draw (0) to (1);
					\draw (6) to (7);
					\draw (7) to (8);
					\draw (10) to (11);
					\draw (11) to (12);
					\draw (6) to (11);
					\draw (7) to (10);
					\draw (7) to (12);
					\draw (8) to (11);
					\draw [bend right=75, looseness=0.75] (2) to (4);
					\draw [bend right=60, looseness=0.75] (6) to (8);
					\draw [bend left=60, looseness=0.75] (10) to (12);
					\draw (6) to (12);
					\draw (10) to (8);
				\end{pgfonlayer}
			\end{tikzpicture}
			
			\caption{$C_3 \bowtie K_2$}
			\label{Figure:C3K2}	
		\end{subfigure}
		\hfill
		\begin{subfigure}[b]{0.45\textwidth}
			\centering
			\begin{tikzpicture}
			\begin{pgfonlayer}{nodelayer}
			\node [style=vertex] (19) at (-1.5, 1) {};
			\node [style=vertex] (20) at (-1.5, 0) {};
			\node [style=vertex] (21) at (1, 1.5) {};
			\node [style=vertex] (22) at (0, 1.5) {};
			\node [style=vertex] (23) at (-1, 1.5) {};
			\node [style=none] (30) at (0, 2.25) {$C_3$};
			\node [style=none] (31) at (-2, 0.5) {$K_2$};
			\node [style=vertex] (32) at (-1, 1) {};
			\node [style=vertex] (33) at (0, 1) {};
			\node [style=vertex] (34) at (1, 1) {};
			\node [style=vertex] (35) at (-1, 0) {};
			\node [style=vertex] (36) at (0, 0) {};
			\node [style=vertex] (37) at (1, 0) {};
			\node [style=none] (38) at (2, 0.5) {};
			\end{pgfonlayer}
			\begin{pgfonlayer}{edgelayer}
			\draw (21) to (22);
			\draw (22) to (23);
			\draw (19) to (20);
			\draw [bend right=75, looseness=0.75] (21) to (23);
			\draw (32) to (35);
			\draw (33) to (36);
			\draw (34) to (37);
			\draw (32) to (36);
			\draw (33) to (35);
			\draw (33) to (37);
			\draw (34) to (36);
			\draw (32) to (37);
			\draw (34) to (35);
			\end{pgfonlayer}
			\end{tikzpicture}
				
			\caption{$K_2 \bowtie C_3$}
			\label{Figure:K2C3}	
		\end{subfigure}
		\caption{Bow tie product of two graphs.}
		\label{Figure:BowtieProduct}
	\end{figure}

We refer to a graph $G$ as being a \textit{smallest graph having some property $\mathcal{P}$} if no graph $H$ on fewer than $|V(G)|$ vertices has property $\mathcal{P}$.

	
\section{Preliminary results} \label{Section:PreliminaryResults}
	
It is straightforward to see that the eternal domination number of a graph is the sum of the eternal domination number of its components; therefore, we restrict our attention to connected graphs. 
	
We begin with the following observation that we may obtain upper and lower bounds on $\gamma^\infty(G)$ by considering the problem on the induced and spanning subgraphs of $G$.
	
	\begin{fact} \label{Fact:Spanning}
		Let $G$ be a graph and let $H$ be a subgraph of $G$.
		\begin{enumerate}[label=(\alph*)]
			\item \rm{\cite{klostermeyer2009eternal}} \textit{If $H$ is an induced subgraph of $G$, then $\gamma^\infty(G) \geq \gamma^\infty(H)$.}
			\item \rm{\cite{anderson2007maximum}} \textit{If $H$ is a spanning subgraph of $G$, then $\gamma^\infty(G) \leq \gamma^\infty(H)$.}
		\end{enumerate}
	\end{fact}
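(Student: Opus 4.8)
Since the two parts point in opposite directions, I would prove them separately, dispatching the spanning case first because it is essentially immediate. For (b) we have $V(H)=V(G)$ and $E(H)\subseteq E(G)$, and the plan is simply to replay a winning defender strategy from $H$ on $G$ move for move. Two facts make this legal: every dominating set of $H$ is also a dominating set of $G$, since enlarging the edge set only enlarges closed neighbourhoods; and every guard move along an edge $uv\in E(H)$ is also a move along an edge of $G$, as $uv\in E(G)$. Thus $k=\gamma^\infty(H)$ guards that answer every attack on $H$ answer every attack on $G$ by the same responses, which gives $\gamma^\infty(G)\le\gamma^\infty(H)$.

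For (a) I would work from the attacker's side. Set $k=\gamma^\infty(G)$, fix a winning defender strategy on $G$ with $k$ guards, and restrict the attacker to attacking vertices of $V(H)$ only. The quantity to watch is the \emph{inside-count}, the number of guards currently occupying $V(H)$. Because each defender response must place a guard on the attacked vertex and that vertex lies in $V(H)$, every response terminates inside $V(H)$; hence the inside-count never decreases. It is bounded above by $k$, so along a suitable sequence of attacks confined to $V(H)$ it reaches some maximum value $c\le k$ and thereafter remains equal to $c$.

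Once the inside-count is pinned at its maximum $c$, no response can bring in a guard from outside $V(H)$ --- that would raise the count past its maximum --- so every response moves an inside guard along an edge with both endpoints in $V(H)$, which is an edge of $H$ because $H$ is induced; the remaining guards outside $V(H)$ are frozen. I would then claim that the $c$ inside guards by themselves dominate $H$: if some unoccupied vertex $w\in V(H)$ had no inside-guard neighbour in $H$, the attacker could attack $w$, and the only legal reply would import an outside guard, contradicting maximality. The inside configurations reachable at count $c$ therefore form a family of $c$-element dominating sets of $H$, closed under defence and answering every attack using edges of $H$; this is a winning eternal domination strategy on $H$ with $c\le k$ guards, so $\gamma^\infty(H)\le\gamma^\infty(G)$. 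The delicate point --- the main obstacle --- is precisely this last domination check: a vertex of $H$ might in principle be dominated in $G$ only by guards sitting outside $V(H)$, and it is the maximality of the inside-count that rules this out once the guards have settled into $V(H)$.
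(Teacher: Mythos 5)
The paper does not prove this fact; it is quoted with citations to Klostermeyer--MacGillivray and Anderson \emph{et al.}, so there is no in-paper argument to compare against. Your proof is correct and is essentially the standard argument from that literature. Part (b) is exactly the immediate replay: a dominating set of a spanning subgraph dominates the supergraph, and every legal guard move in $H$ is legal in $G$. Part (a) is the standard attacker-side simulation, and you handle its one delicate point properly: fixing a (deterministic) winning strategy for $G$ with $k=\gamma^\infty(G)$ guards, restricting attacks to $V(H)$, observing that the number of guards on $V(H)$ is non-decreasing (each response lands on the attacked vertex, which lies in $V(H)$) and bounded by $k$, hence stabilizes at a maximum $c$; after that, every response moves a guard along an edge with both endpoints in $V(H)$, which is an edge of $H$ precisely because $H$ is induced, and the maximality of $c$ is what forces the inside guards alone to dominate $H$ (otherwise an attack on an undominated vertex of $H$ would import an outside guard and raise the count). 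This yields a winning strategy on $H$ with $c\le\gamma^\infty(G)$ guards, as required.
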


	 The reader can easily check that $\gamma^\infty(K_n)=1$ and $\gamma^\infty(\overline{K_n})=n$. Hence, by considering a minimum clique covering of $G$ and playing the game independently on each clique we obtain an upper bound on $\gamma^\infty(G)$. Likewise, if we consider a maximum independent set $S$ of $G$ and play the game on the subgraph induced by $S$, we obtain a lower bound on $\gamma^\infty(G)$. In consequence, we have the following fact of Burger \textit{et al.}
	
	\begin{fact} [\cite{burger2004infinite}] \label{Fact:Bounds}
		For any graph $G$, $\alpha(G) \leq \gamma^\infty(G) \leq \theta(G)$.
	\end{fact}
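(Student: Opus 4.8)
The plan is to prove the two inequalities separately, drawing on the values $\gamma^\infty(K_n)=1$ and $\gamma^\infty(\overline{K_n})=n$ recorded just above the statement, together with the induced-subgraph monotonicity of Fact~\ref{Fact:Spanning}(a). Both halves are essentially the strategies sketched in the paragraph preceding the statement, so the work is to verify that each strategy really does what is claimed against \emph{every} sequence of attacks.

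For the upper bound $\gamma^\infty(G)\leq\theta(G)$, I would fix a minimum clique covering $\{V_1,V_2,\ldots,V_\theta\}$ of $G$ and place a single guard in each part $V_i$. This initial placement is dominating, since every vertex of $V_i$ lies in the closed neighbourhood of the guard occupying $V_i$. The key point is that the defender can respond to attacks clique by clique: an attacked vertex $v$ belongs to a unique part $V_i$, and because $V_i$ induces a clique the guard currently in $V_i$ is adjacent to $v$ and may move there. After any such move the configuration is again exactly one guard per clique, hence again dominating. Thus the game restricted to each $V_i$ is just the game on a complete graph, and since $\gamma^\infty(K_n)=1$ a single guard defends each clique indefinitely; consequently $\theta$ guards defend $G$ forever, giving $\gamma^\infty(G)\leq\theta(G)$.

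For the lower bound $\alpha(G)\leq\gamma^\infty(G)$, I would take a maximum independent set $S$ of $G$, so that $|S|=\alpha$ and the induced subgraph $G[S]$ is isomorphic to $\overline{K_\alpha}$. Since $G[S]$ is an induced subgraph of $G$, Fact~\ref{Fact:Spanning}(a) yields $\gamma^\infty(G)\geq\gamma^\infty(G[S])$, and the value $\gamma^\infty(\overline{K_\alpha})=\alpha$ then completes the chain $\gamma^\infty(G)\geq\alpha$. Intuitively, the vertices of $S$ are pairwise nonadjacent, so a guard can never move from one of them to another, forcing a separate permanent guard on each of the $\alpha$ vertices.

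No step here is a genuine obstacle; the proof is short precisely because the two extremal graph families $K_n$ and $\overline{K_n}$ have already been analysed. The only care required is in confirming that the per-clique strategy maintains a dominating set at \emph{every} time step, which is immediate from the completeness of each $V_i$, and in applying Fact~\ref{Fact:Spanning}(a) in the correct direction so that restricting to an independent set produces a lower, rather than upper, bound on $\gamma^\infty(G)$.
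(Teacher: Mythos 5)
Your proposal is correct and follows essentially the same route as the paper: the text immediately preceding Fact~\ref{Fact:Bounds} derives the upper bound by placing one guard in each clique of a minimum clique covering (using $\gamma^\infty(K_n)=1$), and the lower bound by playing on the subgraph induced by a maximum independent set (i.e.\ Fact~\ref{Fact:Spanning}(a) together with $\gamma^\infty(\overline{K_n})=n$). Your write-up just makes explicit the per-attack verification that the paper leaves as a sketch.
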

	
	The Weak Perfect Graph Theorem \cite{lovasz1972characterization, lovasz1972normal} states that the complement of a perfect graph is perfect. As a direct consequence we have the following corollary.
	
	\begin{cor} [\cite{burger2004infinite}] \label{Corollary:Perfect}
		For any perfect graph $G$, $\alpha(G)=\gamma^\infty(G)=\theta(G)$.
	\end{cor}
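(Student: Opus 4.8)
The plan is to squeeze $\gamma^\infty(G)$ between two quantities that I will show coincide for perfect graphs. By Fact~\ref{Fact:Bounds} we already have $\alpha(G) \leq \gamma^\infty(G) \leq \theta(G)$ for every graph, so it suffices to prove $\alpha(G) = \theta(G)$ when $G$ is perfect; the sandwich then forces all three parameters to agree, which is exactly the assertion.

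To establish $\alpha(G) = \theta(G)$, I would translate both quantities into parameters of the complement $\overline{G}$. As recorded in the Definitions section, $\theta(G) = \chi(\overline{G})$, since a partition of $V(G)$ into cliques of $G$ is precisely a partition of $V(\overline{G})$ into independent sets, i.e.\ a proper colouring of $\overline{G}$. Likewise $\alpha(G) = \omega(\overline{G})$, since an independent set of $G$ is exactly a clique of $\overline{G}$. Hence the target equality $\alpha(G) = \theta(G)$ is equivalent to $\omega(\overline{G}) = \chi(\overline{G})$.

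Now I would invoke the Weak Perfect Graph Theorem: as $G$ is perfect, so is $\overline{G}$. Applying the definition of perfection to $\overline{G}$ regarded as an induced subgraph of itself yields $\chi(\overline{G}) = \omega(\overline{G})$ outright, which is precisely $\theta(G) = \alpha(G)$. Combining this with Fact~\ref{Fact:Bounds} gives $\alpha(G) = \gamma^\infty(G) = \theta(G)$.

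I do not anticipate a genuine obstacle: the mathematical content is entirely supplied by the Weak Perfect Graph Theorem, which is quoted as a black box. The only point demanding mild care is recalling that the definition of a perfect graph quantifies over \emph{all} induced subgraphs, so in particular it applies to the whole graph; this is what upgrades perfection of $\overline{G}$ from an inequality to the equality $\chi(\overline{G}) = \omega(\overline{G})$ needed to close the argument.
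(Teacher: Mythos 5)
Your proof is correct and matches the paper's own (implicit) argument exactly: the paper derives the corollary as a direct consequence of the Weak Perfect Graph Theorem applied to $\overline{G}$, giving $\alpha(G)=\omega(\overline{G})=\chi(\overline{G})=\theta(G)$, and then squeezes $\gamma^\infty(G)$ between them via Fact~\ref{Fact:Bounds}. No discrepancies to report.
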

	
	The inequalities in Fact \ref{Fact:Bounds} can both be strict. Observe that the $5$-cycle is a graph with $\alpha=2<\gamma^\infty=3$ and is the smallest such graph. As for the second inequality, the first proof of the existence of a graph with $\gamma^\infty<\theta$ is due to Goddard, Hedetniemi and Hedetniemi \cite{goddard2005eternal} and follows from Theorem \ref{Theorem:GoddardSmallest} and its generalisation, Theorem \ref{Theorem:Alpha+1}, which shows that the eternal domination number of a graph is bounded above by a function of the independence number of the graph.

	\begin{thm} [\cite{goddard2005eternal}] \label{Theorem:GoddardSmallest}
		If $G$ is a graph such that $\alpha(G)=2$, then $\gamma^\infty(G) \leq 3$.
	\end{thm}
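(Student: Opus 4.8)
The plan is to exhibit an explicit strategy for three guards and to show that it defends against every sequence of attacks, maintaining a dominating set at all times. First I would reformulate the hypothesis $\alpha(G)=2$ as the statement that $G$ has no independent set of three vertices (equivalently, $\overline{G}$ is triangle-free). The central structural observation I would establish is that \emph{any} non-adjacent pair of vertices is already a dominating set: if $a,b$ with $ab \notin E(G)$ failed to dominate some vertex $w$, then $w$ would be non-adjacent to both $a$ and $b$, so $\{w,a,b\}$ would be an independent set of size three, contradicting $\alpha(G)=2$. In particular, any set of three vertices that contains a non-edge is a dominating set of $G$.

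With this in hand I would place three guards and maintain throughout the game the invariant that the three occupied vertices contain a non-adjacent pair. Since $\alpha(G)=2$ there is at least one non-edge $\{a,b\}$, so I would start the game with guards on $a$, $b$, and an arbitrary third vertex $c$; this is a dominating set by the observation above (and the claimed bound is trivial when $n \leq 2$). The presence of a non-edge guarantees that the current configuration dominates, hence every attacked vertex $v$ is adjacent to at least one guard and can be defended; the real work is to choose \emph{which} guard moves so that the invariant survives the response.

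To preserve the invariant I would split into two cases according to the ``free'' guard $c$, the one not in the anchoring non-edge $\{a,b\}$. If the attacked vertex $v$ is adjacent to $c$, move $c$ to $v$; the configuration becomes $\{a,b,v\}$, which still contains the non-edge $ab$. If $v$ is not adjacent to $c$, then since $\{a,b\}$ dominates, $v$ is adjacent to $a$ or to $b$, say to $a$; moving $a$ to $v$ yields $\{v,b,c\}$, which contains the non-edge $vc$ precisely because $v$ is non-adjacent to $c$. In either case a guard adjacent to $v$ has moved onto $v$, and the new triple again contains a non-edge, so the configuration stays dominating and the invariant is restored. Iterating shows that three guards can respond forever, giving $\gamma^\infty(G) \leq 3$. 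The main obstacle to anticipate is exactly the danger that the three guards collapse onto a triangle, which under $\alpha(G)=2$ need not be a dominating set; the device of tracking a single anchoring non-edge, and always leaving both endpoints of some non-edge (or creating a fresh one with the attacked vertex) in place, is what rules this out.
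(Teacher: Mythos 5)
Your proof is correct. Note that the paper does not contain its own proof of Theorem \ref{Theorem:GoddardSmallest}: the result is stated as a citation of \cite{goddard2005eternal}, so there is no internal argument to compare against. Your strategy --- maintain the invariant that the three occupied vertices contain a non-edge, which by the $\alpha(G)=2$ hypothesis forces the configuration to be dominating, and restore the invariant by moving the free guard when it is adjacent to the attacked vertex and otherwise moving an anchor guard (creating a fresh non-edge with the attacked vertex) --- is sound in every detail: the initial placement exists since $G$ is not complete, both cases of the response rule move a guard along an edge onto the attacked vertex, and the new triple always contains a non-edge, so the argument closes correctly and is essentially the standard proof of this result.
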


	\begin{thm} [\cite{klostermeyer2007eternal}] \label{Theorem:Alpha+1}
		For any graph $G$, $\gamma^\infty(G) \leq {\alpha(G)+1 \choose 2}$.
	\end{thm}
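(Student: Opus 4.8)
The plan is to induct on $\alpha = \alpha(G)$. The base cases are immediate: if $\alpha = 1$ then $G$ is complete and $\gamma^\infty(G) = 1 = {2 \choose 2}$, while the case $\alpha = 2$ is exactly Theorem \ref{Theorem:GoddardSmallest}, giving $\gamma^\infty(G) \le 3 = {3 \choose 2}$. For the inductive step I would exploit the identity ${\alpha+1 \choose 2} = {\alpha \choose 2} + \alpha$, which suggests splitting the vertex set into two pieces: one defended recursively with ${\alpha \choose 2}$ guards and one defended with $\alpha$ guards.

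The structural device is a closed-neighbourhood reduction. For any vertex $v$, write $W = V(G) \setminus N[v]$. Since every vertex of $W$ is a non-neighbour of $v$, appending $v$ to any independent set of $G[W]$ yields an independent set of $G$; hence $\alpha(G[W]) \le \alpha - 1$, and by the induction hypothesis $G[W]$ can be defended eternally with ${\alpha \choose 2}$ guards. The idea is then to defend $N[v]$ with the remaining $\alpha$ guards and to run the two games independently: each attack is answered inside the part containing it, responses of an induced subgame never leave that part, and since $V(G) = W \sqcup N[v]$ with each part kept dominated by its own guards, $G$ remains dominated throughout. This composition is exactly the clique-cover strategy of Burger \textit{et al.}\ generalised from cliques to arbitrary eternally-defensible parts, so it is routine once the two budgets are in place.

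The main obstacle is the second piece: showing that $N[v]$ can be defended with only $\alpha$ guards, i.e.\ that some vertex $v$ satisfies $\gamma^\infty(G[N[v]]) \le \alpha(G)$. This \emph{cannot} hold for an arbitrary $v$ — if $v$ is the hub of a wheel $W_5$ sitting inside $G$, then $G[N[v]]$ contains a $5$-cycle and needs $3 > 2$ guards — so the choice of $v$ is essential. I would take $v$ in a maximum independent set $I$ and attempt to cover $N[v]$ by $\alpha$ cliques (equivalently, bound $\theta(G[N[v]])$ and invoke $\gamma^\infty \le \theta$ from Fact \ref{Fact:Bounds}), leveraging that the other $\alpha - 1$ vertices of $I$ lie outside $N[v]$ and that $I$ dominates $G$. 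Iterating the reduction peels $V(G)$ into shells $N[v_1], N[v_2], \ldots$ whose budgets $\alpha, \alpha - 1, \ldots, 1$ sum to ${\alpha+1 \choose 2}$, so the whole theorem rests on proving that at each stage such a vertex exists.

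Establishing this existence — or, failing a clean clique cover, designing a coordinated strategy in which guards of the two groups may temporarily assist one another across the $W$/$N[v]$ interface — is where the real difficulty lies, and it is the step I expect to be hardest. The delicate point is that, in the single-guard-movement model, any genuine cross-help forces a guard to abandon its post, so one must either prove the separation is lossless via a careful choice of $v$ or control the interface explicitly; I would first invest effort in the clique-covering route for $v \in I$, since it keeps the two subgames fully independent and makes the inductive bookkeeping transparent.
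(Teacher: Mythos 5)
First, a point of comparison: the paper does not prove Theorem \ref{Theorem:Alpha+1} at all --- it quotes it from \cite{klostermeyer2007eternal} --- so there is no internal proof to measure your argument against, and I judge the proposal on its own. It contains a fatal gap, and it is exactly the step you flagged as hardest: the existence of a vertex $v$ with $\gamma^\infty(G[N[v]]) \le \alpha(G)$ (or the stronger $\theta(G[N[v]]) \le \alpha(G)$). This claim is not merely unproven; it is false, and a counterexample already appears in this paper. Take $G = \overline{M_4}$, the complement of the Gr\"otzsch graph (Figure \ref{Figure:GrotzschComplement}), which has $\alpha = 2$, $\gamma^\infty = 3$ and $\theta = 4$. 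For the Mycielski hub $w$, the closed neighbourhood $N[w]$ in $\overline{M_4}$ consists of $w$ together with the five original cycle vertices and induces the wheel $W_5$, whose rim is an induced $C_5$; a direct check of the other two vertex orbits shows that in fact $N[v]$ contains an induced $C_5$ for \emph{every} vertex $v$ of $\overline{M_4}$. By Fact \ref{Fact:Spanning}(a) and $\gamma^\infty(C_5) = 3$, every vertex satisfies $\gamma^\infty(G[N[v]]) \ge 3 > 2 = \alpha$, and a fortiori $\theta(G[N[v]]) \ge 3$. Choosing $v$ inside a maximum independent set does not help, since every vertex of $\overline{M_4}$ lies in one.

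Worse, the flaw is architectural and cannot be repaired by a cleverer choice of $v$: no argument that fixes a partition of $V(G)$ into parts defended independently can establish the bound. For $\overline{M_4}$, any partition into $m \ge 2$ parts costs $\sum_i \gamma^\infty(G[V_i]) \ge 4$ guards. Indeed, a total cost of $3$ would force the parts to be three cliques, or a clique plus a part $V_1$ with $\gamma^\infty(G[V_1]) = 2$; in the latter case $\alpha(G[V_1]) = 2$ and $|V_1| \le 10$, so $\theta(G[V_1]) \le 3$ by the corollary to Chv\'atal's theorem in Section \ref{Section:PreliminaryResults}, and then $\theta(G[V_1]) = \gamma^\infty(G[V_1]) = 2$ because $\gamma^\infty = \theta$ whenever $\theta \le 3$; either way one gets $\theta(\overline{M_4}) \le 3$, contradicting $\theta(\overline{M_4}) = 4$. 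Yet the theorem promises ${3 \choose 2} = 3$ guards for this graph, which is exactly $\gamma^\infty(\overline{M_4})$. So the cross-interface cooperation you deferred to the last paragraph is not a technical refinement but the entire content of the theorem: the cited proof of Klostermeyer and MacGillivray does induct on $\alpha$, but the guards' division of labour there is dynamic, re-organized as attacks arrive, and no fixed split of the vertex set is maintained. Your base cases and the observation that two independently defensible parts compose are fine; everything beyond that still needs to be invented.
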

	
	It is known that for any integer $k \geq 2$ there exists a triangle-free graph $G$ with chromatic number $k$. The first known construction of a family containing such graphs with arbitrarily large chromatic numbers is due to Blanche Descartes \cite{descartes1947three, descartes1954solution}. Mycielski \cite{mycielski1955coloriage} described the construction of a family $\mathcal{F}=\{M_2, M_3, M_4, \ldots\}$ of triangle-free graphs starting with $M_2 = K_2$, where, for each $k \geq 3$, the graph $M_k$ is obtained from the graph $M_{k-1}$ and has chromatic number $k$ (see $M_4$ in Figure \ref{Figure:Grotzsch}).
	
	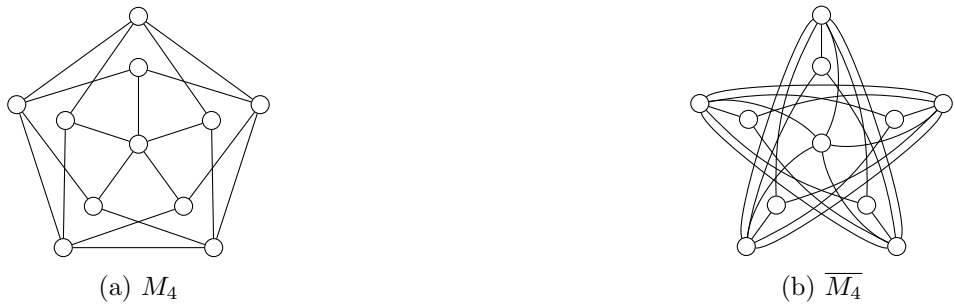
\begin{figure}[h!]
		\centering
		\begin{subfigure}[b]{0.45\textwidth}
			\centering
			\begin{tikzpicture}
			\begin{pgfonlayer}{nodelayer}
			\node [style=vertex] (0) at (0, 3.0776) {};
			\node [style=vertex] (1) at (1.618, 1.9021) {};
			\node [style=vertex] (2) at (1, 0) {};
			\node [style=vertex] (3) at (-1, 0) {};
			\node [style=vertex] (4) at (-1.618, 1.9021) {};
			\node [style=vertex] (5) at (0, 2.3971) {};
			\node [style=vertex] (6) at (0.9708, 1.6918) {};
			\node [style=vertex] (7) at (0.6, 0.5505) {};
			\node [style=vertex] (8) at (-0.6, 0.5505) {};
			\node [style=vertex] (9) at (-0.9708, 1.6918) {};
			\node [style=vertex] (10) at (0, 1.3763) {};
			\end{pgfonlayer}
			\begin{pgfonlayer}{edgelayer}
			\draw (0) to (1);
			\draw (1) to (2);
			\draw (2) to (3);
			\draw (3) to (4);
			\draw (4) to (0);
			\draw (5) to (10);
			\draw (6) to (10);
			\draw (7) to (10);
			\draw (8) to (10);
			\draw (9) to (10);
			\draw (0) to (6);
			\draw (0) to (9);
			\draw (1) to (5);
			\draw (1) to (7);
			\draw (2) to (6);
			\draw (2) to (8);
			\draw (3) to (7);
			\draw (3) to (9);
			\draw (4) to (8);
			\draw (4) to (5);
			\end{pgfonlayer}
			\end{tikzpicture}
			
			\caption{$M_4$}
			\label{Figure:Grotzsch}	
		\end{subfigure}
		\hfill
		\begin{subfigure}[b]{0.45\textwidth}
			\centering
			\begin{tikzpicture}
			\begin{pgfonlayer}{nodelayer}
			\node [style=vertex] (0) at (0, 3.0776) {};
			\node [style=vertex] (1) at (1.618, 1.9021) {};
			\node [style=vertex] (2) at (1, 0) {};
			\node [style=vertex] (3) at (-1, 0) {};
			\node [style=vertex] (4) at (-1.618, 1.9021) {};
			\node [style=vertex] (5) at (0, 2.3971) {};
			\node [style=vertex] (6) at (0.9708, 1.6918) {};
			\node [style=vertex] (7) at (0.6, 0.5505) {};
			\node [style=vertex] (8) at (-0.6, 0.5505) {};
			\node [style=vertex] (9) at (-0.9708, 1.6918) {};
			\node [style=vertex] (10) at (0, 1.3763) {};
			\end{pgfonlayer}
			\begin{pgfonlayer}{edgelayer}
			\draw [bend right=15] (0) to (8);
			\draw [bend right=45, looseness=0.25] (0) to (3);
			\draw [bend left=15] (0) to (7);
			\draw [bend left=45, looseness=0.25] (0) to (2);
			\draw [bend left=345] (1) to (9);
			\draw [bend left=315, looseness=0.25] (1) to (4);
			\draw [bend left=15] (1) to (8);
			\draw [bend left=45, looseness=0.25] (1) to (3);
			\draw [bend left=15] (2) to (9);
			\draw [bend right=315, looseness=0.25] (2) to (4);
			\draw [bend right=15] (2) to (5);
			\draw [bend right=15] (3) to (6);
			\draw [bend left=15, looseness=0.75] (3) to (5);
			\draw [bend left=15] (4) to (6);
			\draw [bend right=15] (4) to (7);
			\draw [bend right, looseness=0.75] (10) to (0);
			\draw [bend right, looseness=0.75] (10) to (4);
			\draw [bend right, looseness=0.75] (10) to (3);
			\draw [bend right, looseness=0.75] (10) to (2);
			\draw [bend right, looseness=0.75] (10) to (1);
			\draw (0) to (5);
			\draw (4) to (9);
			\draw (3) to (8);
			\draw (2) to (7);
			\draw (1) to (6);
			\end{pgfonlayer}
			\end{tikzpicture}
			
			\caption{$\overline{M_4}$}
			\label{Figure:GrotzschComplement}	
		\end{subfigure}
		\caption{The Grötzsch graph (a) and its complement (b).}
		\label{Figure:GrotzschGraph}
	\end{figure}

	\begin{cor} [\cite{goddard2005eternal}]
		For any integer $k \geq 4$, $\gamma^\infty(\overline{M_k})<\theta(\overline{M_k})$.
	\end{cor}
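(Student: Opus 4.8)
The plan is to reduce the claim to the two parameters that are already under control for complements of triangle-free graphs, namely $\theta$ and $\alpha$, and then invoke Theorem \ref{Theorem:GoddardSmallest}. First I would compute $\theta(\overline{M_k})$. Since the clique covering number of a graph equals the chromatic number of its complement, $\theta(\overline{M_k}) = \chi(\overline{\overline{M_k}}) = \chi(M_k) = k$, the last equality being exactly the defining property of the Mycielski construction recalled just above the statement.

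Next I would pin down $\alpha(\overline{M_k})$. Using the identity $\alpha(G) = \omega(\overline{G})$, we obtain $\alpha(\overline{M_k}) = \omega(M_k)$. Because $M_k$ is triangle-free and, for $k \geq 2$, contains at least one edge, we have $\omega(M_k) = 2$; hence $\alpha(\overline{M_k}) = 2$.

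With $\alpha(\overline{M_k}) = 2$ established, Theorem \ref{Theorem:GoddardSmallest} immediately yields $\gamma^\infty(\overline{M_k}) \leq 3$. Combining the two computations, for every integer $k \geq 4$ we have
\[
\gamma^\infty(\overline{M_k}) \leq 3 < 4 \leq k = \theta(\overline{M_k}),
\]
which is precisely the desired strict inequality.

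There is essentially no hard step here: the entire argument is an assembly of the identities $\theta(G) = \chi(\overline{G})$ and $\alpha(G) = \omega(\overline{G})$ together with Theorem \ref{Theorem:GoddardSmallest}. The only points requiring care are verifying the two inputs from the Mycielski construction — that $\chi(M_k) = k$ and that $M_k$ is triangle-free with at least one edge — both of which are guaranteed by the construction. The hypothesis $k \geq 4$ is exactly what is needed to push $\theta(\overline{M_k}) = k$ strictly above the bound $3$ coming from $\alpha(\overline{M_k}) = 2$, so the inequality would fail to be strict for $k = 3$ and the corollary is sharp in that sense.
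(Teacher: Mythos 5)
Your proof is correct and follows exactly the route the paper intends: since $M_k$ is triangle-free, $\alpha(\overline{M_k})=\omega(M_k)=2$, so Theorem \ref{Theorem:GoddardSmallest} gives $\gamma^\infty(\overline{M_k})\leq 3$, while $\theta(\overline{M_k})=\chi(M_k)=k\geq 4$. Your closing remark about sharpness at $k=3$ (where $\overline{M_3}=\overline{C_5}\cong C_5$ has $\gamma^\infty=\theta=3$) is also accurate.
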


	Our primary purpose in this paper is to study the relationship between the eternal domination number and the clique covering number for some specific graphs; however, Theorem \ref{Theorem:Alpha+1} implies a nice result on the problem regarding general random graphs that is worth mentioning.
	
	\begin{thm} [\cite{alon1992probabilistic}, Theorem $3.1$] \label{Theorem:RandomGraphs}
		For almost all graphs $G$, $\alpha(G) \leq (2+o(1)) \log_2 n$ and $\theta(G) \geq (1+o(1)) \frac{n}{2 \log_2 n}$.
	\end{thm}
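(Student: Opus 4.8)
The plan is to work in the uniform random graph model $G(n,1/2)$, in which every labeled graph on $n$ vertices receives probability $2^{-\binom{n}{2}}$; the phrase ``almost all graphs'' then means that the stated properties hold with probability tending to $1$ as $n \to \infty$. The two assertions are linked, so I would establish the bound on $\alpha$ first by a first-moment argument and then deduce the bound on $\theta$ from it, using the complementation symmetry of the model together with the elementary inequality $\theta(G) \ge n/\omega(G)$.

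For the independence number, let $X_k$ denote the number of independent sets of size $k$ in $G \sim G(n,1/2)$. A fixed $k$-set is independent precisely when all $\binom{k}{2}$ possible edges are absent, so linearity of expectation gives
\[
\mathbb{E}[X_k] = \binom{n}{k}\, 2^{-\binom{k}{2}}.
\]
I would set $k = \lceil (2+\varepsilon)\log_2 n \rceil$ for a fixed $\varepsilon > 0$ and estimate, via $\binom{n}{k} \le (en/k)^k$, that $\log_2 \mathbb{E}[X_k] \le k\big(\log_2(en/k) - (k-1)/2\big)$. With this choice of $k$ the bracket is asymptotic to $-(\varepsilon/2)\log_2 n$, so $\mathbb{E}[X_k] \to 0$. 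By Markov's inequality $\Pr[\alpha(G) \ge k] = \Pr[X_k \ge 1] \le \mathbb{E}[X_k] \to 0$, and since $\varepsilon > 0$ was arbitrary this yields $\alpha(G) \le (2+o(1))\log_2 n$ with high probability.

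For the clique covering number, I would observe that any clique cover partitions $V(G)$ into cliques, each of size at most $\omega(G)$, whence $\theta(G) \ge n/\omega(G)$. Now $\omega(G) = \alpha(\overline{G})$, and because complementation is a measure-preserving bijection on labeled graphs, $\overline{G}$ is itself distributed as $G(n,1/2)$; the first-moment bound just established therefore applies verbatim to $\alpha(\overline{G})$, giving $\omega(G) \le (2+o(1))\log_2 n$ with high probability. Combining the two bounds yields
\[
\theta(G) \;\ge\; \frac{n}{(2+o(1))\log_2 n} \;=\; (1+o(1))\,\frac{n}{2\log_2 n},
\]
as required.

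The main technical obstacle is the asymptotic estimate of $\mathbb{E}[X_k]$: one must take $k$ just above the true threshold $2\log_2 n$ so that the lower-order contributions of $\binom{n}{k}$ and $2^{-\binom{k}{2}}$ (which cancel to leading order) tip the expectation to zero, and care is needed to keep the $\log\log n$ corrections from spoiling the sign of the bracket. Everything else is structural: the reduction of the $\theta$ bound to the $\alpha$ bound through $\theta \ge n/\omega$ and the self-duality of $G(n,1/2)$ under complementation is immediate once the first-moment computation is in hand.
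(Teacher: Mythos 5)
Your proposal is correct: the paper does not prove this statement itself but cites it from Alon and Spencer, and your argument (first-moment bound on $\mathbb{E}[X_k]=\binom{n}{k}2^{-\binom{k}{2}}$ for $k=(2+\varepsilon)\log_2 n$, then $\theta(G)\ge n/\omega(G)$ combined with the self-complementarity of $G(n,1/2)$) is exactly the standard proof given in that reference. The only point worth polishing is the final step for $\alpha$: passing from ``for each fixed $\varepsilon>0$'' to the stated $(2+o(1))\log_2 n$ form requires either a diagonalization over $\varepsilon_n\to 0$ or simply taking $k=\lceil 2\log_2 n\rceil+2$ directly (with $\binom{n}{k}\le n^k$ the bracket is already $\le -1/2$), which you gloss over but which is routine.
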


	\begin{cor}
		For almost all graphs $G$, $\gamma^\infty(G) < \theta(G)$.
	\end{cor}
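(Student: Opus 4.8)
The plan is to combine Theorem~\ref{Theorem:Alpha+1} with the asymptotic estimates of Theorem~\ref{Theorem:RandomGraphs}. The only facts I need are the universal upper bound $\gamma^\infty(G) \leq {\alpha(G)+1 \choose 2}$, valid for every graph, together with the statement that, for almost all $G$, the independence number $\alpha(G)$ is only logarithmically large while the clique covering number $\theta(G)$ is almost linear in $n$. Since a polylogarithmic upper bound on $\gamma^\infty$ must eventually be dwarfed by a near-linear lower bound on $\theta$, the strict inequality should follow by a direct asymptotic comparison.

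First I would isolate the two ``almost all'' events supplied by Theorem~\ref{Theorem:RandomGraphs}: the event $A$ that $\alpha(G) \leq (2+o(1))\log_2 n$, and the event $B$ that $\theta(G) \geq (1+o(1))\frac{n}{2\log_2 n}$. Each holds with probability tending to $1$, so by a union bound on their complements the intersection $A \cap B$ also has probability tending to $1$. It therefore suffices to verify $\gamma^\infty(G) < \theta(G)$ on $A \cap B$ for all sufficiently large $n$.

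On the event $A$, Theorem~\ref{Theorem:Alpha+1} yields
\[
\gamma^\infty(G) \leq {\alpha(G)+1 \choose 2} = \frac{\alpha(G)\bigl(\alpha(G)+1\bigr)}{2} \leq (2+o(1))(\log_2 n)^2,
\]
so $\gamma^\infty(G)$ grows at most like $(\log_2 n)^2$. On the event $B$, the clique covering number grows like $n/\log_2 n$. Comparing the two, the ratio satisfies
\[
\frac{\gamma^\infty(G)}{\theta(G)} = O\!\left(\frac{(\log_2 n)^3}{n}\right) \longrightarrow 0,
\]
so for all sufficiently large $n$ the strict inequality $\gamma^\infty(G) < \theta(G)$ holds throughout $A \cap B$, hence for almost all $G$.

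I do not anticipate any genuine obstacle: the argument is a substitution of the quoted bounds followed by an elementary growth comparison. The only mild care required is to track the $o(1)$ terms so that they do not corrupt the final comparison, and to observe that intersecting two events of asymptotic probability one is harmless. If one wished to be pedantic, the single subtlety worth flagging is that the two estimates of Theorem~\ref{Theorem:RandomGraphs} are asserted ``for almost all graphs'' separately, and the proof quietly relies on their simultaneous validity, which the union bound above makes rigorous.
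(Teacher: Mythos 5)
Your proof is correct and follows essentially the same route as the paper: apply Theorem~\ref{Theorem:Alpha+1} to the independence bound of Theorem~\ref{Theorem:RandomGraphs} to get $\gamma^\infty(G) \leq (2+o(1))(\log_2 n)^2$ almost surely, then compare this against the lower bound $\theta(G) \geq (1+o(1))\frac{n}{2\log_2 n}$ and note the ratio tends to $0$. Your explicit union-bound remark about intersecting the two almost-sure events is a small rigor refinement the paper leaves implicit, but it is not a different argument.
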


	\begin{proof}
		We know from Theorem \ref{Theorem:RandomGraphs} that $\alpha(G) \leq (2+o(1)) \log_2 n$ for almost all graphs $G$. Together with Theorem \ref{Theorem:Alpha+1}, this implies that $\gamma^\infty(G) \leq (2+o(1)) (\log_2 n)^2 $ for almost all graphs $G$. Since $\lim_{n \to \infty} \frac{2 (\log_2 n)^2}{n/(2 \log_2 n)} =0$, we conclude that $\gamma^\infty(G)<\theta(G)$ for almost all graphs $G$.
	\end{proof}
	
	\begin{thm} [\cite{chvatal1974minimality}]
		The Grötzsch graph (Figure \ref{Figure:Grotzsch}) is the unique smallest triangle-free graph with chromatic number at least $4$.
	\end{thm}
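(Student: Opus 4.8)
The plan is to split the statement into three parts: that the Grötzsch graph is itself triangle-free with chromatic number $4$; that no triangle-free graph on fewer than $11$ vertices has chromatic number exceeding $3$; and that every triangle-free graph on exactly $11$ vertices with $\chi \geq 4$ is isomorphic to it. The first part is immediate from the Mycielski construction reviewed above: the Grötzsch graph (Figure \ref{Figure:Grotzsch}) is $M_4$, the Mycielskian of $C_5$, so it is triangle-free and satisfies $\chi(M_4) = 4$. It therefore remains to prove minimality and uniqueness, and for both I would argue via a minimal example.

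For minimality and uniqueness it suffices to consider graphs that are vertex-critical with respect to the chromatic number, since any graph $G$ with $\chi(G) \geq 4$ contains a $4$-critical subgraph $H$ on at most $|V(G)|$ vertices, and $H$ inherits triangle-freeness. In a $4$-critical graph every vertex has degree at least $3$; triangle-freeness forces the neighbourhood of each vertex to be independent; and in any proper $3$-colouring of $H - v$ the neighbours of $v$ must collectively use all three colours, since otherwise the colouring would extend to $H$. These three facts are the workhorses of the argument.

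The first use of these facts is to bound the independence number. Because each neighbourhood is independent we have $\Delta(H) \leq \alpha(H)$. If $\alpha(H) \leq 3$, then $\Delta(H) \leq 3$, and since $H$ contains no $K_4$, Brooks' theorem gives $\chi(H) \leq 3$, a contradiction; hence $\alpha(H) \geq 4$. The Ramsey numbers $R(3,3) = 6$, $R(3,4) = 9$ and $R(3,5) = 14$ give the complementary estimates for triangle-free graphs, namely $\alpha \leq 4 \Rightarrow n \leq 13$, so that together with $n \leq 4\alpha(H)$ (four colour classes, each of size at most $\alpha$) the problem is confined to a finite range of orders. This reduces minimality to excluding $n \in \{9, 10\}$ and reduces uniqueness to the single value $n = 11$.

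The crux — and the step I expect to be the main obstacle — is the finite but delicate structural analysis that eliminates $n \leq 10$ and forces uniqueness at $n = 11$. Here I would exploit the degree-$3$ vertices: if $v$ has neighbours $\{a,b,c\}$, this triple is independent and receives all three colours in every $3$-colouring of $H - v$, which, combined with $\delta(H) \geq 3$, $\Delta(H) \leq \alpha(H)$, and triangle-freeness, propagates constraints that cannot be jointly satisfied on $10$ or fewer vertices. For $n = 11$ I would reconstruct the Mycielskian structure directly: the analysis locates a vertex of degree $5$ serving as the \emph{apex}, whose five neighbours form an independent \emph{shadow} set, sitting above a $5$-cycle on the degree-$4$ vertices, which pins the adjacencies down to those of $M_4$ and yields the isomorphism. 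Carrying out this reconstruction while handling every branch of the degree and independence-number case split is the hard part; the large-scale computation described elsewhere in the paper furnishes an independent confirmation of the final list.
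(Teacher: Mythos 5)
The paper offers no proof of this statement to compare against: it is imported verbatim as a known theorem of Chv\'atal \cite{chvatal1974minimality}, and the paper only uses its corollary (in complement form) about the Gr\"otzsch graph's complement. Judged on its own terms, your proposal has a genuine gap. Your preliminary reductions are correct and standard: pass to a $4$-critical triangle-free subgraph $H$, note $\delta(H)\geq 3$, that neighbourhoods are independent (hence $\Delta(H)\leq\alpha(H)$), that $\alpha(H)\geq 4$ by Brooks' theorem, and that $R(3,5)=14$ forces $n\leq 13$ when $\alpha\leq 4$. But everything that makes the theorem true is deferred: the non-existence of $4$-critical triangle-free graphs on $9$ or $10$ vertices, and the forcing of the Mycielskian structure at $n=11$, are described only as intentions (``propagates constraints that cannot be jointly satisfied,'' ``the analysis locates a vertex of degree $5$''), with no case analysis actually carried out. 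That case analysis \emph{is} Chv\'atal's theorem; what you have actually proved is only that a minimal counterexample is a $4$-critical triangle-free graph with $\alpha\geq 4$ on at most $13$ vertices.

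Three further problems. First, even the claimed reduction ``to excluding $n\in\{9,10\}$'' does not follow from what you state: $\alpha(H)\geq 4$, $n\leq 4\alpha(H)$ and $n\leq 13$ give no lower bound on $n$ at all. (A lower bound $n\geq 9$ can be obtained by deleting a maximum independent set and observing that the remaining triangle-free graph has chromatic number at least $3$, hence contains an odd cycle on at least $5$ vertices --- but no such argument appears in your write-up.) Second, the fallback to ``the large-scale computation described elsewhere in the paper'' is not available: those computations determine eternal domination numbers versus clique covering numbers, not the chromatic numbers of all triangle-free graphs on at most $11$ vertices, and the paper itself relies on Chv\'atal's result rather than re-deriving it, so invoking it would be circular as support for this theorem. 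Third, for uniqueness you need one more (easy) step you omit: the $4$-critical subgraph $H$ of an $11$-vertex triangle-free $G$ with $\chi(G)\geq 4$ is a priori only a spanning subgraph of $G$; one recovers $G\cong M_4$ because the Gr\"otzsch graph is maximal triangle-free, so no edge of $G$ can be missing from $H$.
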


	\begin{cor}
		The complement of the Grötzsch graph is the unique smallest graph with independence number $2$ and clique covering number at least $4$.
	\end{cor}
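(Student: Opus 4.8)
The plan is to reduce the statement to Chvátal's theorem (stated immediately above) by complementation, exploiting the two dualities already recorded in the Definitions section: $\alpha(G)=\omega(\overline{G})$ for every graph $G$, and $\theta(G)=\chi(\overline{G})$ (the clique covering number of a graph equals the chromatic number of its complement). Since complementation is an involution that fixes the number of vertices, it sets up a bijection between the two families of graphs in question, under which both ``smallest'' and ``unique'' are preserved.

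First I would translate the defining property. Let $G$ be any graph. Then $\alpha(G)=2$ is equivalent to $\omega(\overline{G})=2$, that is, to $\overline{G}$ being triangle-free and containing at least one edge; and $\theta(G)\geq 4$ is equivalent to $\chi(\overline{G})\geq 4$. Hence $G$ has independence number $2$ and clique covering number at least $4$ if and only if $\overline{G}$ is a triangle-free graph with chromatic number at least $4$, the ``at least one edge'' requirement being automatic since a graph with $\chi\geq 4$ certainly has edges.

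Next I would invoke Chvátal's theorem: among triangle-free graphs with chromatic number at least $4$, the Grötzsch graph $M_4$ is the unique one of smallest order, and it has $11$ vertices; in particular no triangle-free graph on fewer than $11$ vertices has chromatic number at least $4$, and on $11$ vertices the only such graph is $M_4$. Because $|V(\overline{G})|=|V(G)|$, the equivalence established above transports this verbatim: no graph on fewer than $11$ vertices has independence number $2$ and clique covering number at least $4$, and on $11$ vertices the only such graph is $\overline{M_4}$. This yields both minimality and uniqueness, so $\overline{M_4}$ is the unique smallest graph with the stated property.

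I do not expect a genuine obstacle here; the only point requiring care is making the dictionary exact, namely checking that $\omega(\overline{G})=2$ corresponds precisely to triangle-freeness together with the presence of an edge, and that this side condition is harmless because it is forced by $\chi(\overline{G})\geq 4$.
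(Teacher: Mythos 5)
Your proof is correct and matches the paper's intended argument exactly: the paper states this as an immediate corollary of Chv\'atal's theorem, relying on the same dualities $\alpha(G)=\omega(\overline{G})$ and $\theta(G)=\chi(\overline{G})$ recorded in its Definitions section, with complementation preserving order and hence both minimality and uniqueness. Your extra care about the ``at least one edge'' side condition being forced by $\chi(\overline{G})\geq 4$ is a nice touch but introduces nothing beyond what the paper leaves implicit.
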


	Klostermeyer and Mynhardt \cite{klostermeyer2016protecting, klostermeyer2020eternal} posed the following questions.

	\begin{que} [\cite{klostermeyer2016protecting}] \label{Question:Smallest}
		Is the complement of the Grötzsch graph, a graph of order $11$, the smallest graph with eternal domination number less than its clique covering number?
	\end{que}

	\begin{que} [\cite{klostermeyer2020eternal}] \label{Question:GammaSets}
		Let $G$ be a graph with $\gamma(G)=\gamma^\infty(G)$. Is any minimum dominating set of $G$ an eternal dominating set of $G$?
	\end{que}

	We answer Question \ref{Question:Smallest} in Section \ref{Section:GeneralGraphs} by finding two graphs of order $10$ with this property. We also show that the answer to Question \ref{Question:GammaSets} is no by constructing an infinite family of counterexamples.


	\section{Computational methods} \label{Section:Algorithms}
	Consider the following decision problem.
	\begin{center}
		\fbox{\parbox{6in}{
				ETERNAL DOMINATION \\
				INSTANCE: A graph $G$ and an integer $k>0$. \\
				QUESTION: Is $\gamma^\infty(G) \leq k?$}}
	\end{center}
	
	Although the precise complexity of ETERNAL DOMINATION is not known for general graphs, Klostermeyer and MacGillivray \cite{klostermeyer2016dynamic} showed that the problem can be solved in time exponential in $n$. Their algorithm is described for a variant of the eternal domination game. Algorithm $1$ is adapted to the version of the game considered in this paper. Klostermeyer also studied the complexity of the problem under narrow assumptions on the sequence of attacks \cite{klostermeyer2007complexity}.
	
	The precise complexity of ETERNAL DOMINATION is known for graphs belonging to certain graph classes, for example perfect graphs. We know from Corollary \ref{Corollary:Perfect} that the eternal domination number of a perfect graph is equal to its independence number and hence its clique covering number. It is well known \cite{grtschel1988geometric} that the independence number of a perfect graph can be found in polynomial time in $n$. As a result, ETERNAL DOMINATION is in P for perfect graphs.
	
	\begin{table}[h]
		\begin{center}
			\begin{tabular}{p{15cm}}
				\hline
				\textbf{Algorithm 1} (\cite{klostermeyer2016dynamic})\textbf{.} Determine whether $\gamma^\infty(G) \leq k$. \\
				\hline
				Given $G$, construct an arc-coloured digraph $D$ as follows.
				\begin{enumerate}
					\item The vertex set of $D$ is the set of $k$-vertex dominating sets of $G$. The set of colours is $V(G)$. There is an arc from $X$ to $Y$ of colour $v$ when $Y-X=\{v\}$, and $v$ is adjacent in $G$ to the unique vertex $w \in X-Y$.
					\item Delete any vertex $X$ which is not the origin of an arc coloured $x$ for some $x \in V(G) \backslash X$.
					\item Repeat Step $2$ until no further vertices can be deleted.
				\end{enumerate}
				If $D$ is the null graph, then $\gamma^\infty(G) > k$; otherwise, $\gamma^\infty(G) \leq k$. \\
				\hline
			\end{tabular}
		\end{center}
	\end{table}

	\vspace{-12pt}
	In the following section, we report on a large-scale computation performed on a PowerEdge R$7425$ server equipped with two AMD EPYC processors and totalling $64$ threaded CPU cores for the purposes of verifying some of the propositions in the paper. The computations were done in Python using NetworkX and Algorithm $1$ along with a few basic graph algorithms. 

	In our search, we often needed to generate the set of all graphs belonging to some particular class; to this end, we used NAUTY \cite{mckay2014practical} (version $2.7001$) and PLANTRI \cite{brinkmann2007fast, brinkmann2007plantri} (version $5.2$). On one hand, using NAUTY, we were able to generate the set of all graphs, the set of all triangle-free graphs, and the set of all cubic graphs of order $n$ for some given values of $n$. On the other hand, using PLANTRI, we were able to generate the set of all planar graphs of order $n$ for some given values of $n$.
	
	Since we studied the relationship between the eternal domination number and other graph parameters such as the domination number, the independence number and the clique covering number, we also found the value of each of these parameters for each graph in our computation. 
	
	We computed the independence number of each graph by computing the clique number of its complement using the built-in functions \verb|complement()| and \verb|graph_clique_number()| from the Python package NetworkX. As for the clique covering number, we computed this value using three different approaches. For general graphs, we started by enumerating the set of maximal cliques of the graph using the built-in function \verb|find_cliques()| from NetworkX. Using a naive method and an integer programming method, we found the size of the smallest subset of the set of maximal cliques which covers all the vertices of the graph. To solve the integer programs, we used the Python packages \verb|PuLP| and \verb|MIP| and they both agreed on the results. Our naive method is on average faster than the integer programming method on the set of graphs of up to order $11$. When it is known in advance that the graph is triangle-free, we first found the size of a maximum matching in the graph using the function \verb|max_weight_matching()| from NetworkX and then substracted this number from the order of the graph. As for the domination number of the graph, we found this value by first generating the $k$-vertex subsets of the $n$-vertex set. Then, using the built-in function \verb|is_dominating_set| from NetworkX, we checked whether the graph had a dominating set of that size.
	
	Now, to find the the eternal domination number of the graph, we first compared its independence number to its clique covering number using the algorithms described above. If these parameters are equal, then they are also equal to the eternal domination number. Otherwise, we computed the eternal domination number using Algorithm $1$. The largest single computation took approximately $216$ CPU days before the results (Table $2$) were found. The class of graphs on which our computations were the slowest is the set of maximal triangle-free graphs with independence numbers less than their clique covering numbers.
	
	
	\section{Main results} \label{Section:MainResults}

	\subsection{General graphs} \label{Section:GeneralGraphs}
	
	In this section, we show that the graphs $G_1$ and $G_2$ depicted in Figure \ref{Figure:SmallestCounterExample} are the smallest graphs having their eternal domination numbers less than their clique covering numbers. Observe that $G_2$ is obtained from $G_1$ by adding the edge $(67)$.
	
	\begin{figure}[h!]
		\centering
		\begin{subfigure}[b]{0.45\textwidth}
			\centering
			\begin{tikzpicture}
				\begin{pgfonlayer}{nodelayer}
					\node [style=vertex] (9) at (0, 3.2859) {$0$};
					\node [style=vertex] (4) at (-1.3514, 2.6351) {$6$};
					\node [style=vertex] (6) at (-0.8639, 1.7489) {$4$};
					\node [style=vertex] (1) at (-1.6852, 1.1727) {$1$};
					\node [style=vertex] (7) at (-0.75, 0) {$2$};
					\node [style=vertex] (3) at (0.75, 0) {$7$};
					\node [style=vertex] (10) at (1.6852, 1.1727) {$8$};
					\node [style=vertex] (5) at (0.8639, 1.7489) {$5$};
					\node [style=vertex] (8) at (1.3514, 2.6351) {$3$};
					\node [style=vertex] (2) at (0, 2.2) {$9$};
				\end{pgfonlayer}
				\begin{pgfonlayer}{edgelayer}
					\draw (1) to (4);
					\draw (1) to (5);
					\draw (1) to (6);
					\draw (1) to (7);
					\draw (1) to (10);
					\draw (2) to (3);
					\draw (2) to (5);
					\draw (2) to (6);
					\draw (2) to (7);
					\draw (2) to (9);
					\draw (3) to (6);
					\draw (3) to (7);
					\draw (3) to (8);
					\draw (3) to (10);
					\draw (4) to (6);
					\draw (4) to (7);
					\draw (4) to (8);
					\draw (4) to (9);
					\draw (5) to (7);
					\draw (5) to (8);
					\draw (5) to (9);
					\draw (5) to (10);
					\draw (6) to (9);
					\draw (6) to (10);
					\draw (8) to (9);
					\draw (8) to (10);
				\end{pgfonlayer}
			\end{tikzpicture}
				
			\caption{$G_1$}
			\label{Figure:G1}	
		\end{subfigure}
		\hfill
		\begin{subfigure}[b]{0.45\textwidth}
			\centering
			\begin{tikzpicture}
			\begin{pgfonlayer}{nodelayer}
			\node [style=vertex] (9) at (0, 3.2859) {$0$};
			\node [style=vertex] (4) at (-1.3514, 2.6351) {$6$};
			\node [style=vertex] (6) at (-0.8639, 1.7489) {$4$};
			\node [style=vertex] (1) at (-1.6852, 1.1727) {$1$};
			\node [style=vertex] (7) at (-0.75, 0) {$2$};
			\node [style=vertex] (3) at (0.75, 0) {$7$};
			\node [style=vertex] (10) at (1.6852, 1.1727) {$8$};
			\node [style=vertex] (5) at (0.8639, 1.7489) {$5$};
			\node [style=vertex] (8) at (1.3514, 2.6351) {$3$};
			\node [style=vertex] (2) at (0, 2.2) {$9$};
			\end{pgfonlayer}
			\begin{pgfonlayer}{edgelayer}
			\draw (1) to (4);
			\draw (1) to (5);
			\draw (1) to (6);
			\draw (1) to (7);
			\draw (1) to (10);
			\draw (2) to (3);
			\draw (2) to (5);
			\draw (2) to (6);
			\draw (2) to (7);
			\draw (2) to (9);
			\draw [bend right=15, looseness=0.50] (3) to (4);
			\draw (3) to (6);
			\draw (3) to (7);
			\draw (3) to (8);
			\draw (3) to (10);
			\draw (4) to (6);
			\draw (4) to (7);
			\draw (4) to (8);
			\draw (4) to (9);
			\draw (5) to (7);
			\draw (5) to (8);
			\draw (5) to (9);
			\draw (5) to (10);
			\draw (6) to (9);
			\draw (6) to (10);
			\draw (8) to (9);
			\draw (8) to (10);
			\end{pgfonlayer}
			\end{tikzpicture}
						
			\caption{$G_2$}
			\label{Figure:G2}	
		\end{subfigure}
		\caption{Smallest graphs with $\gamma^\infty<\theta$.}
		\label{Figure:SmallestCounterExample}
	\end{figure}
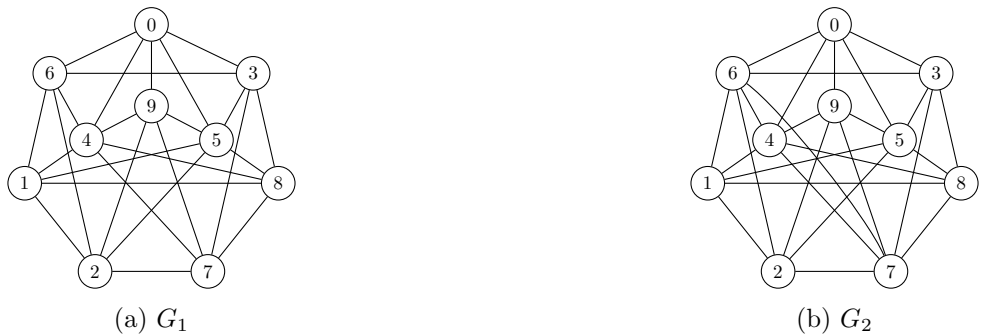

	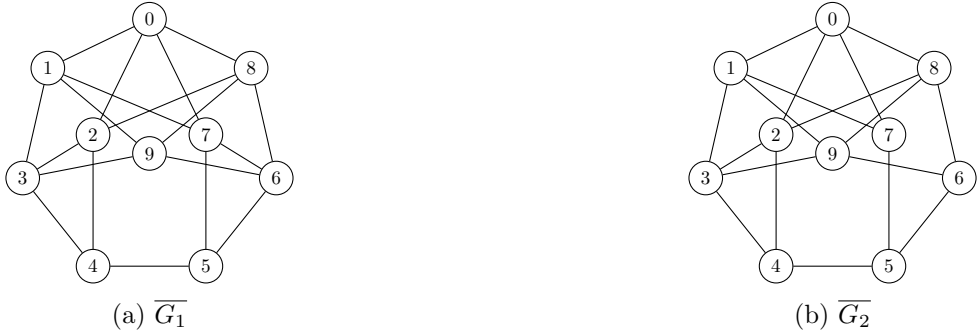
\begin{figure}[h!]
		\centering
		\begin{subfigure}[b]{0.45\textwidth}
			\centering
			\begin{tikzpicture}
				\begin{pgfonlayer}{nodelayer}
					\node [style=vertex] (10) at (0, 1.5) {$9$};
					\node [style=vertex] (1) at (0, 3.2859) {$0$};
					\node [style=vertex] (2) at (-1.3514, 2.6351) {$1$};
					\node [style=vertex] (3) at (-0.75, 1.75) {$2$};
					\node [style=vertex] (5) at (-0.75, 0) {$4$};
					\node [style=vertex] (9) at (1.3514, 2.6351) {$8$};
					\node [style=vertex] (8) at (0.75, 1.75) {$7$};
					\node [style=vertex] (6) at (0.75, 0) {$5$};
					\node [style=vertex] (4) at (-1.6852, 1.1727) {$3$};
					\node [style=vertex] (7) at (1.6852, 1.1727) {$6$};
				\end{pgfonlayer}
				\begin{pgfonlayer}{edgelayer}
					\draw (1) to (2);
					\draw (1) to (3);
					\draw (1) to (8);
					\draw (1) to (9);
					\draw (2) to (4);
					\draw (2) to (8);
					\draw (2) to (10);
					\draw (3) to (4);
					\draw (3) to (5);
					\draw (3) to (9);
					\draw (4) to (5);
					\draw (4) to (10);
					\draw (5) to (6);
					\draw (6) to (7);
					\draw (6) to (8);
					\draw (7) to (8);
					\draw (7) to (9);
					\draw (7) to (10);
					\draw (9) to (10);
				\end{pgfonlayer}
			\end{tikzpicture}
					
			\caption{$\overline{G_1}$}
			\label{Figure:G1Complement}	
		\end{subfigure}
		\hfill
		\begin{subfigure}[b]{0.45\textwidth}
			\centering
			\begin{tikzpicture}
				\begin{pgfonlayer}{nodelayer}
					\node [style=vertex] (10) at (0, 1.5) {$9$};
					\node [style=vertex] (1) at (0, 3.2859) {$0$};
					\node [style=vertex] (2) at (-1.3514, 2.6351) {$1$};
					\node [style=vertex] (3) at (-0.75, 1.75) {$2$};
					\node [style=vertex] (5) at (-0.75, 0) {$4$};
					\node [style=vertex] (9) at (1.3514, 2.6351) {$8$};
					\node [style=vertex] (8) at (0.75, 1.75) {$7$};
					\node [style=vertex] (6) at (0.75, 0) {$5$};
					\node [style=vertex] (4) at (-1.6852, 1.1727) {$3$};
					\node [style=vertex] (7) at (1.6852, 1.1727) {$6$};
				\end{pgfonlayer}
				\begin{pgfonlayer}{edgelayer}
					\draw (1) to (2);
					\draw (1) to (3);
					\draw (1) to (8);
					\draw (1) to (9);
					\draw (2) to (4);
					\draw (2) to (8);
					\draw (2) to (10);
					\draw (3) to (4);
					\draw (3) to (5);
					\draw (3) to (9);
					\draw (4) to (5);
					\draw (4) to (10);
					\draw (5) to (6);
					\draw (6) to (7);
					\draw (6) to (8);
					\draw (7) to (9);
					\draw (7) to (10);
					\draw (9) to (10);
				\end{pgfonlayer}
			\end{tikzpicture}
			
			\caption{$\overline{G_2}$}
			\label{Figure:G2Complement}	
		\end{subfigure}
		\caption{Complements of the graphs in Figure \ref{Figure:SmallestCounterExample}.}
		\label{Figure:SmallestComplement}
	\end{figure}
	
	We first show that these graphs have eternal domination numbers less than their clique covering numbers. To this end, we begin with the following facts; the proofs are easy and left to the reader. 
	
	\begin{fact} \label{Fact:IndependenceCliqueCovering}
		For the graphs $G_1, G_2, \overline{G_1}$ and $\overline{G_2}$ depicted in Figures \ref{Figure:SmallestCounterExample} and \ref{Figure:SmallestComplement},
		\begin{enumerate} [label=(\alph*)]
			\item $\alpha(G_1)=\alpha(G_2)=\omega(\overline{G_1})=\omega(\overline{G_2})=3$.
			\item $\theta(G_1)=\theta(G_2)=\chi(\overline{G_1})=\chi(\overline{G_2})=4$.
		\end{enumerate}
	\end{fact}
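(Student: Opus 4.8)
The plan is to reduce both statements to the two identities already recorded in the paper, namely $\alpha(G)=\omega(\overline{G})$ and $\theta(G)=\chi(\overline{G})$, so that it suffices to establish the claimed values for $G_1$ and $G_2$ and then transfer them to the complements for free. I would prove each statement for $G_1$ directly and obtain the corresponding statement for $G_2$ by a monotonicity argument, since $G_2=G_1+\{67\}$ differs from $G_1$ by a single extra edge.

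For part (a), the lower bound $\alpha(G_1)\geq 3$ is witnessed by a single independent triple, for instance $\{0,1,7\}$, which one checks is pairwise non-adjacent in $G_1$. For the upper bound $\alpha(G_1)\leq 3$ I would rule out any independent set of size $4$ by a short elimination. The two degree-$6$ vertices $4$ and $5$ each have exactly three non-neighbours, namely $\{2,3,5\}$ and $\{4,6,7\}$ respectively, and in each case those three already contain an edge ($25$ and $47$); hence neither $4$ nor $5$ can lie in a $4$-element independent set. This confines the search to the subgraph induced on the remaining eight vertices, where vertex $6$ (adjacent to $0,1,2,3$) and vertex $7$ (adjacent to $2,3,8,9$) can each be dispatched because their non-neighbours admit no independent triple extending them. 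What remains is the subgraph of $G_1$ induced on $\{0,1,2,3,8,9\}$, which is the single $6$-cycle $0\,3\,8\,1\,2\,9\,0$ and so has independence number $3$. Thus $\alpha(G_1)=3$, and $\omega(\overline{G_1})=3$ by $\alpha(G)=\omega(\overline{G})$. Since adding the edge $67$ cannot increase the independence number and $\{0,1,7\}$ stays independent in $G_2$, the value $3$ carries over to $G_2$ and $\overline{G_2}$.

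For part (b), I would first establish $\omega(G_1)=3$: a triangle exists (e.g. $\{0,3,5\}$), and no $K_4$ exists because the neighbourhood of every vertex induces a triangle-free graph, each being a path, a $5$-cycle, or a $6$-cycle. Since every clique then has at most three vertices, any clique cover of the ten vertices of $G_1$ uses at least $\lceil 10/3\rceil = 4$ cliques, giving $\theta(G_1)\geq 4$. For the matching upper bound I would exhibit the explicit partition $\{0,4,6\},\{1,2,5\},\{3,7,8\},\{9\}$, each part readily verified to be a clique, so $\theta(G_1)=4$. For $G_2$, adding $67$ keeps the clique number equal to $3$, since the only new triangles are $\{6,7,2\},\{6,7,3\},\{6,7,4\}$ and no two of $2,3,4$ are adjacent, so no $K_4$ is created; meanwhile the four-clique partition above is still a clique cover of the larger graph, giving $\theta(G_2)=4$. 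The identity $\theta(G)=\chi(\overline{G})$ transfers both values to $\overline{G_1}$ and $\overline{G_2}$.

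The only genuinely non-automatic steps are the absence of a $4$-element independent set in $G_1$ and the triangle-freeness of every neighbourhood; both are finite checks, and the main care lies in ordering the case analysis so that the degree-$6$ vertices $4,5$ and the vertex $6$ are removed first, collapsing the independence problem onto the $6$-cycle on $\{0,1,2,3,8,9\}$, where the conclusion is immediate.
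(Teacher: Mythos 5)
Your proof is correct, and every concrete claim checks out against the edge list of Figure~\ref{Figure:SmallestCounterExample}: the triple $\{0,1,7\}$ is independent; the non-neighbourhoods of the degree-$6$ vertices $4$ and $5$ are $\{2,3,5\}$ and $\{4,6,7\}$ and contain the edges $25$ and $47$; after dispatching $6$ and $7$ the graph induced on $\{0,1,2,3,8,9\}$ is exactly the $6$-cycle $0\,3\,8\,1\,2\,9\,0$; the sets $\{0,4,6\},\{1,2,5\},\{3,7,8\},\{9\}$ do form a clique partition; every vertex neighbourhood of $G_1$ induces a path, $5$-cycle or $6$-cycle, so $\omega(G_1)=3$ and the counting bound $\theta(G_1)\geq\lceil 10/3\rceil=4$ applies; and the common neighbourhood $\{2,3,4\}$ of $6$ and $7$ is independent, so adding $67$ creates no $K_4$. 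There is, however, nothing to compare your argument against: the paper explicitly declines to prove this fact, saying ``the proofs are easy and left to the reader,'' so your write-up is a completion of that exercise rather than an alternative to an existing proof. Two remarks on the structure of your argument. First, your reduction to the complements via $\alpha(G)=\omega(\overline{G})$ and $\theta(G)=\chi(\overline{G})$ uses exactly the identities recorded in the paper's definitions section, which is surely the intended route. Second, your handling of $G_2$ is careful in the one place where care is needed: monotonicity under edge addition gives the upper bounds $\alpha(G_2)\leq\alpha(G_1)$ and $\theta(G_2)\leq\theta(G_1)$ for free, but the lower bound $\theta(G_2)\geq 4$ does \emph{not} transfer from $G_1$ (adding edges can only decrease $\theta$), and you correctly close this gap by verifying $\omega(G_2)=3$ directly and reusing the counting bound. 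Your triangle-freeness observation about the neighbourhoods of $G_1$ is also a pleasant complement-side counterpart of the paper's Fact~\ref{Fact:Neighbourhood}, which records the analogous structure for $\overline{G_1}$.
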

	
	\begin{fact} \label{Fact:Triangles}
		The graph $\overline{G_1}$ contains exactly six triangles, namely $\{2, 3, 4\}$, $\{5, 6, 7\}$, $\{0, 2, 8\}$, $\{0, 1, 7\}$, $\{1, 3, 9\}$ and $\{6, 8, 9\}$, any two of which share at most one vertex.
	\end{fact}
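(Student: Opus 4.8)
The plan is to prove the statement by directly examining the adjacency structure of $\overline{G_1}$, read off from Figure \ref{Figure:G1Complement}. The key observation is that a triangle of any graph is exactly an edge $uv$ together with a vertex $w$ adjacent to both $u$ and $v$; consequently the number of triangles containing a fixed edge $uv$ equals the number of common neighbours of $u$ and $v$. I would therefore first record the neighbourhood of each of the ten vertices, and then work edge by edge through the $19$ edges of $\overline{G_1}$, computing for each one the set of common neighbours of its two endpoints.

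The heart of the argument is the single claim that \emph{every pair of adjacent vertices of $\overline{G_1}$ has at most one common neighbour}. Granting this, the final clause of the statement is immediate: an edge lying in two distinct triangles would be a pair of adjacent vertices with two common neighbours, so no edge can belong to two triangles, which is precisely the assertion that any two of the triangles meet in at most one vertex. The same claim also reduces the counting to simple bookkeeping: since no edge lies in more than one triangle, each triangle accounts for three distinct edges, so the number of triangles is exactly one third of the number of edges whose endpoints have a common neighbour. Carrying out the common-neighbour computation shows that exactly one edge, namely $\{4,5\}$, has no common neighbour, while each of the remaining $18$ edges has precisely one; matching these common neighbours against the vertices then exhibits exactly the six triangles listed, and $18/3=6$ confirms the count.

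There is no genuine conceptual obstacle here: the statement is a finite verification, and the only real difficulty is carrying out the neighbourhood and common-neighbour computations over all ten vertices without error. To shorten and to cross-check the casework I would exploit the degree sequence of $\overline{G_1}$---vertices $4$ and $5$ are its only two vertices of degree $3$, every other vertex having degree $4$---together with the structural fact that each of the eight vertices other than $4$ and $5$ ends up lying in exactly two of the six triangles, while $4$ and $5$ each lie in exactly one. This incidence pattern provides an independent consistency check ensuring that no triangle has been overlooked or spuriously introduced.
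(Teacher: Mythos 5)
Your proposal is correct: the paper leaves this fact as an easy direct verification, and your edge-by-edge common-neighbour check (all 19 edges of $\overline{G_1}$, with $\{4,5\}$ the unique edge in no triangle and each of the other $18$ edges in exactly one, giving $18/3=6$ triangles and the at-most-one-shared-vertex property) is exactly that verification, just organized cleanly. All of your stated computations, including the degree sequence and the triangle-incidence pattern used as a cross-check, are accurate.
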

	
	\begin{fact} \label{Fact:Neighbourhood}
		The subgraph induced by the vertices in the open neighbourhood of each vertex of $\overline{G_1}$ is isomorphic to either $2 K_2$ or to $K_1 \cup K_2$.
	\end{fact}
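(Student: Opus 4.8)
The plan is to reduce the claim to a short count that leverages the triangle structure already recorded in Fact \ref{Fact:Triangles}. The key observation is that for any vertex $v$, an edge of the induced subgraph $\overline{G_1}[N(v)]$ is exactly a pair $\{x,y\} \subseteq N(v)$ with $xy \in E(\overline{G_1})$, and this is the same thing as a triangle $\{v,x,y\}$ of $\overline{G_1}$ passing through $v$. Hence the number of edges of $\overline{G_1}[N(v)]$ equals the number of triangles containing $v$, and the endpoints of those edges are precisely the neighbours of $v$ that lie on such a triangle.

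First I would read off the degree sequence of $\overline{G_1}$ from Figure \ref{Figure:G1Complement}: every vertex has degree $4$ except the two vertices $4$ and $5$, which have degree $3$. Next, using the explicit list of the six triangles in Fact \ref{Fact:Triangles}, I would tally the triangles through each vertex, obtaining that each of $0,1,2,3,6,7,8,9$ lies in exactly two of them while each of $4$ and $5$ lies in exactly one. Combined with the previous paragraph, this says that $\overline{G_1}[N(v)]$ has exactly two edges when $\deg(v)=4$ and exactly one edge when $\deg(v)=3$.

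It then remains to argue that the two edges in a degree-$4$ neighbourhood are vertex-disjoint. This is immediate from the ``any two triangles share at most one vertex'' part of Fact \ref{Fact:Triangles}: if $v$ lies on triangles $\{v,a,b\}$ and $\{v,c,d\}$ and the edges $ab$ and $cd$ shared an endpoint, then those two triangles would share that endpoint together with $v$, contradicting the bound. Consequently a degree-$4$ neighbourhood, having four vertices and exactly two vertex-disjoint edges, induces $2K_2$, while a degree-$3$ neighbourhood, having three vertices and exactly one edge, induces $K_1 \cup K_2$.

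The only point that needs care is matching the two ingredients correctly: the edge count of $\overline{G_1}[N(v)]$ is governed by the triangle count while the vertex count is governed by $\deg(v)$, so that two disjoint edges are forced to occupy all four neighbours rather than leaving a spurious isolated vertex. Everything else is a finite, mechanical verification, which could equally well be carried out by directly inspecting the adjacencies in Figure \ref{Figure:G1Complement}; that direct inspection is the natural fallback should one prefer not to route the argument through Fact \ref{Fact:Triangles}.
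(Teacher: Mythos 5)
Your proof is correct, and it takes a different route from the paper: the paper offers no argument at all for this fact (it is one of the facts declared ``easy and left to the reader''), with the implied method being direct inspection of all ten open neighbourhoods in Figure \ref{Figure:G1Complement}. Your derivation instead routes the claim through Fact \ref{Fact:Triangles}: the bijection between edges of $\overline{G_1}[N(v)]$ and triangles through $v$ is sound, the tallies (two triangles through each degree-$4$ vertex, one through each of the degree-$3$ vertices $4$ and $5$) match the listed six triangles, and the ``share at most one vertex'' clause correctly forces the two edges in a degree-$4$ neighbourhood to be disjoint, yielding $2K_2$ versus $K_1 \cup K_2$ exactly as claimed. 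What this buys is an explanation rather than a check: it shows the neighbourhood structure is a formal consequence of the triangle structure plus the degree sequence, and it shrinks the case analysis from ten induced-subgraph inspections to reading off degrees and tallying a list. The one thing to keep in mind is that your argument leans on the exhaustiveness claim in Fact \ref{Fact:Triangles} (``exactly six triangles''), which the paper also leaves unverified; so the total verification burden is not reduced but relocated, and within the paper's logical structure that is perfectly legitimate.
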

	
	The following result can be verified by computer, but we include a proof as well.
	
	\begin{prop} \label{Proposition:SmallestGraph}
		For the graphs $G_1$ and $G_2$ depicted in Figure \ref{Figure:SmallestCounterExample}, $\gamma^\infty(G_1)=\gamma^\infty(G_2)<\theta(G_1)=\theta(G_2)$.
	\end{prop}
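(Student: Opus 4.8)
The plan is to reduce everything to the single claim $\gamma^\infty(G_1)\le 3$ and read off all the remaining (in)equalities from facts already recorded. By Fact~\ref{Fact:IndependenceCliqueCovering} we have $\alpha(G_1)=\alpha(G_2)=3$ and $\theta(G_1)=\theta(G_2)=4$, so Fact~\ref{Fact:Bounds} already confines both eternal domination numbers to $\{3,4\}$. Since $G_2=G_1+\{67\}$, the graph $G_1$ is a spanning subgraph of $G_2$, so Fact~\ref{Fact:Spanning}(b) gives $\gamma^\infty(G_2)\le\gamma^\infty(G_1)$; together with $\gamma^\infty(G_2)\ge\alpha(G_2)=3$ this shows that once $\gamma^\infty(G_1)\le 3$ is established we immediately obtain $\gamma^\infty(G_1)=\gamma^\infty(G_2)=3<4=\theta(G_1)=\theta(G_2)$, which is the statement. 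Thus the lower bounds, the $\theta$-values, and the transfer from $G_1$ to $G_2$ are all routine, and the entire content lies in the upper bound $\gamma^\infty(G_1)\le 3$.

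To prove $\gamma^\infty(G_1)\le 3$ I would exhibit a winning strategy for three guards, that is, a nonempty family $\mathcal{F}$ of $3$-vertex dominating sets of $G_1$ that is closed under defence: for every $D\in\mathcal{F}$ and every $v\notin D$ there is a guard $w\in D\cap N(v)$ with $(D\setminus\{w\})\cup\{v\}\in\mathcal{F}$. Such a family is exactly a set of vertices surviving the pruning of Algorithm~1 restricted to the $3$-vertex dominating sets, so producing one certifies $\gamma^\infty(G_1)\le 3$. The natural seed is the collection of the six maximum independent sets of $G_1$, which by Fact~\ref{Fact:Triangles} are precisely the six triangles of $\overline{G_1}$. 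These are the most robust configurations, but they do not by themselves form a closed family: the two vertices of degree $6$ lie in only one of the six sets each, so an attack on one of them cannot in general be answered by a move into another maximum independent set. I would therefore enlarge $\mathcal{F}$ by adjoining the additional $3$-vertex dominating sets (necessarily containing an edge) needed to answer those attacks, and iterate until closure.

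Two structural features keep the closure check finite and checkable by hand. First, $G_1$ admits the commuting automorphisms $(0\,9)(2\,6)(3\,7)(4\,5)$ and $(0\,9)(1\,8)(2\,3)(6\,7)$, so it suffices to verify the closure condition for one representative from each orbit of configurations. Second, Facts~\ref{Fact:Triangles} and~\ref{Fact:Neighbourhood} control how a single guard move can break domination: the bounded pairwise overlap of the six independent sets, together with the fact that each neighbourhood in $\overline{G_1}$ is $2K_2$ or $K_1\cup K_2$ (equivalently, each non-neighbourhood in $G_1$ induces $C_4$ or $P_3$), restricts which vertex can become undominated after a move, so that for each configuration only a few attacks are genuinely constraining.

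The main obstacle is precisely this closure verification. The delicate cases are attacks on a vertex adjacent to a single guard, where that guard's move is forced; one must confirm that no such forced move leaves a vertex undominated, since a single dead end would force $\mathcal{F}$ to be pruned and could collapse it to the empty family. Representative checks are encouraging — from $\{2,3,4\}$ every attack has a dominating response, and from an enlarged set such as $\{0,2,4\}$ the forced moves land back on maximum independent sets like $\{2,3,4\}$ and $\{0,2,8\}$ — but turning this into a complete proof means writing out the full family $\mathcal{F}$ and confirming every forced move, which is exactly where the symmetry reduction and the neighbourhood structure of Fact~\ref{Fact:Neighbourhood} do the real work.
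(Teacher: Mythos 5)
Your reduction is sound and matches the paper's: Facts~\ref{Fact:IndependenceCliqueCovering}, \ref{Fact:Bounds} and \ref{Fact:Spanning}(b) do confine everything to the single claim $\gamma^\infty(G_1)\le 3$, your identification of the six maximum independent sets of $G_1$ with the six triangles of $\overline{G_1}$ is correct, and the two permutations you name are indeed commuting automorphisms of $G_1$. The certificate framework is also legitimate in principle: a nonempty family $\mathcal{F}$ of $3$-vertex dominating sets closed under defence is exactly what Algorithm~$1$ certifies, so exhibiting one would prove the bound.

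The genuine gap is that you never exhibit such a family: the proof stops exactly where the content of the proposition begins. You do not write down $\mathcal{F}$, you do not verify closure, and you say so yourself (``turning this into a complete proof means writing out the full family $\mathcal{F}$ and confirming every forced move''). This verification is not a routine afterthought that can be waved at with symmetry: your own analysis shows the six independent sets are not closed (attacks on the degree-$6$ vertices $4$ and $5$ force the defender off the independent-set family), so $\mathcal{F}$ must be enlarged, and nothing you prove rules out the possibility that the enlargement cascades and the pruning of Algorithm~$1$ collapses the family to empty --- which is precisely what \emph{would} happen if $\gamma^\infty(G_1)$ were $4$. Two encouraging spot checks therefore establish nothing; the whole proposition rides on the table you have not produced. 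Note that the paper sidesteps this enumeration entirely by arguing backwards: it assumes three guards lose, places them at time $t=k-1$ on $b,c,d$ with the attacked vertex $a$ undominated, uses Fact~\ref{Fact:Neighbourhood} to pin down the colouring of $\{a,b,c,d\}$, and then a short case analysis on the guard's previous move yields either two triangles of $\overline{G_1}$ sharing two vertices (contradicting Fact~\ref{Fact:Triangles}) or a $7$-vertex graph that would have to be an induced subgraph of $\overline{G_1}$ but is not. If you wish to keep your forward approach, the missing work is concrete: list the sets of $\mathcal{F}$ (the six triangles of $\overline{G_1}$ plus the needed dominating sets containing an edge, up to your symmetry group) and tabulate, for every set in $\mathcal{F}$ and every vertex outside it, a response landing back in $\mathcal{F}$; until that table exists, the upper bound, and hence the proposition, is unproven.
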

	
	\begin{proof}
		Since $G_1$ is a spanning subgraph of $G_2$, it suffices to show that $\gamma^\infty(G_1)=3$. Fact \ref{Fact:Spanning} will then imply that $\gamma^\infty(G_2)=3$. We do this by contradiction and assume that $\gamma^\infty(G_1) \geq 4$. We may also assume without loss of generality that three guards are initially located on the vertices of an independent set of $G_1$. This is because the attacker may sequentially attack all the vertices of a maximum independent set of $G_1$ and force a guard to be located on each of them. Let $k$ be the smallest integer such that there exists a sequence of attacks of length $k$ that the three guards cannot defend. Suppose the guards respond optimally to that sequence of attacks. At time  $t=k-1$, they will be located on the vertices $b, c$ and $d$, none of which is adjacent to vertex $a$ (see Figure \ref{Figure:GuardsConfiguration0} where a thick black edge corresponds to an edge in $G_1$ and a thin blue edge corresponds to an edge in $\overline{G_1}$). 
		
		\begin{figure}[h!]
			\centering
			\begin{subfigure}[b]{0.3\textwidth}
				\centering
				\begin{tikzpicture}[scale=0.5]
					\begin{pgfonlayer}{nodelayer}
						\node [style=vertex] (1) at (0, 2) {$a$};
						\node [style=vertex] (2) at (-2, 0) {$c$};
						\node [style=vertex] (3) at (0, 0.5) {$b$};
						\node [style=vertex] (4) at (2, 0) {$d$};
					\end{pgfonlayer}
					\begin{pgfonlayer}{edgelayer}
						\draw [style=blueedge] (1) to (2);
						\draw [style=blueedge] (1) to (3);
						\draw [style=blueedge] (1) to (4);
					\end{pgfonlayer}
				\end{tikzpicture}
			\end{subfigure}
			
			\caption{Configuration of the guards at time $t=k-1$.}
			\label{Figure:GuardsConfiguration0}
		\end{figure}
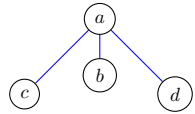
			
		Following Fact \ref{Fact:Neighbourhood}, we may assume that vertex $c$ is not adjacent to vertex $d$ and vertex $b$ is adjacent to both of $c$ and $d$. So, we colour the edge $cd$ blue and the edges $bc$ and $bd$ black (see Figure \ref{Figure:GuardsConfiguration01}). 

		\begin{figure}[h!]
			\centering
			\begin{subfigure}[b]{0.3\textwidth}
				\centering
				\begin{tikzpicture}[scale=0.5]
						\begin{pgfonlayer}{nodelayer}
							\node [style=vertex] (1) at (0, 2) {$a$};
							\node [style=vertex] (2) at (-2, 0) {$c$};
							\node [style=vertex] (3) at (0, 0.5) {$b$};
							\node [style=vertex] (4) at (2, 0) {$d$};
						\end{pgfonlayer}
						\begin{pgfonlayer}{edgelayer}
							\draw [style=blueedge] (1) to (2);
							\draw [style=blueedge] (1) to (3);
							\draw [style=blueedge] (1) to (4);
							\draw [style=ultra thick] (2) to (3);
							\draw [style=ultra thick] (3) to (4);
							\draw [style=blueedge, bend right=15] (2) to (4);
						\end{pgfonlayer}
					\end{tikzpicture}
			\end{subfigure}
			
			\caption{Configuration of the guards at time $t=k-1$.}
			\label{Figure:GuardsConfiguration01}
		\end{figure}
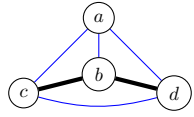		
		
		Note that in the previous time ($t=k-2$), the guards dominated all the vertices of $G$, in particular vertex $a$ was dominated. We now consider two cases depending on the location of the guards at time $t=k-2$.
		
		\begin{itemize}
			\item Case $1$: In the previous turn, a guard moved from a vertex $e$ to vertex $c$ (see Figure \ref{Figure:Loss2}). In this case, $e$ is adjacent to $a$ because $a$ was dominated at time $t=k-2$ but is undominated at time $t=k-1$, and $b$ has a neighbour $f$ which is non-adjacent to all of $c, d$ and $e$ (otherwise the guards would still be on a dominating set if the guard on $b$ moved to $c$ instead; see Figure \ref{Figure:Loss3}). This contradicts Fact \ref{Fact:Triangles} since the thin blue triangles $\{a, c, d\}$ and $\{f, c, d\}$ share two vertices ($c$ and $d$; see Figure \ref{Figure:Loss3}).
			
			\begin{figure}[h!]
				\centering
				\begin{subfigure}[b]{0.3\textwidth}
					\centering
					\begin{tikzpicture}[scale=0.5]
						\begin{pgfonlayer}{nodelayer}
							\node [style=vertex] (1) at (0, 2) {$a$};
							\node [style=vertex] (2) at (-2, 0) {$c$};
							\node [style=vertex] (3) at (0, 0.5) {$b$};
							\node [style=vertex] (4) at (2, 0) {$d$};
						\end{pgfonlayer}
						\begin{pgfonlayer}{edgelayer}
							\draw [style=blueedge] (1) to (2);
							\draw [style=blueedge] (1) to (3);
							\draw [style=blueedge] (1) to (4);
							\draw [style=ultra thick] (2) to (3);
							\draw [style=ultra thick] (3) to (4);
							\draw [style=blueedge, bend right=15] (2) to (4);
						\end{pgfonlayer}
					\end{tikzpicture}
					
					\caption{}
					\label{Figure:Loss1}
				\end{subfigure}
				\hfill
				\begin{subfigure}[b]{0.3\textwidth}
					\centering
					\begin{tikzpicture}[scale=0.5]
						\begin{pgfonlayer}{nodelayer}
							\node [style=vertex] (1) at (0, 2) {$a$};
							\node [style=vertex] (2) at (-2, 0) {$c$};
							\node [style=vertex] (3) at (0, 0.5) {$b$};
							\node [style=vertex] (4) at (2, 0) {$d$};
							\node [style=vertex] (5) at (-2, -2) {$e$};
						\end{pgfonlayer}
						\begin{pgfonlayer}{edgelayer}
							\draw [style=blueedge] (1) to (2);
							\draw [style=blueedge] (1) to (3);
							\draw [style=blueedge] (1) to (4);
							\draw [style=ultra thick] (2) to (3);
							\draw [style=ultra thick] (3) to (4);
							\draw [style=blueedge, bend right=15] (2) to (4);
							\draw [style=ultra thick] (2) to (5);
							\draw [style=ultra thick] (1) to (5);
						\end{pgfonlayer}
					\end{tikzpicture}
					
					\caption{}
					\label{Figure:Loss2}
				\end{subfigure}
				\hfill
				\begin{subfigure}[b]{0.3\textwidth}
					\centering
					\begin{tikzpicture}[scale=0.5]
						\begin{pgfonlayer}{nodelayer}
							\node [style=vertex] (1) at (0, 2) {$a$};
							\node [style=vertex] (2) at (-2, 0) {$c$};
							\node [style=vertex] (3) at (0, 0.5) {$b$};
							\node [style=vertex] (4) at (2, 0) {$d$};
							\node [style=vertex] (5) at (-2, -2) {$e$};
							\node [style=vertex] (6) at (0, -2) {$f$};
						\end{pgfonlayer}
						\begin{pgfonlayer}{edgelayer}
							\draw [style=blueedge] (1) to (2);
							\draw [style=blueedge] (1) to (3);
							\draw [style=blueedge] (1) to (4);
							\draw [style=ultra thick] (2) to (3);
							\draw [style=ultra thick] (3) to (4);
							\draw [style=blueedge, bend right=15] (2) to (4);
							\draw [style=ultra thick] (2) to (5);
							\draw [style=ultra thick] (1) to (5);
							\draw [style=ultra thick] (3) to (6);
							\draw [style=blueedge, bend right=15] (2) to (6);
							\draw [style=blueedge, bend left=15] (4) to (6);
							\draw [style=blueedge] (6) to (5);
						\end{pgfonlayer}
					\end{tikzpicture}
					
					\caption{}
					\label{Figure:Loss3}
				\end{subfigure}
				
				\caption{Configuration of the guards in Case $1$ of the proof of Proposition \ref{Proposition:SmallestGraph}.}
				\label{Figure:GuardsConfiguration}
			\end{figure}
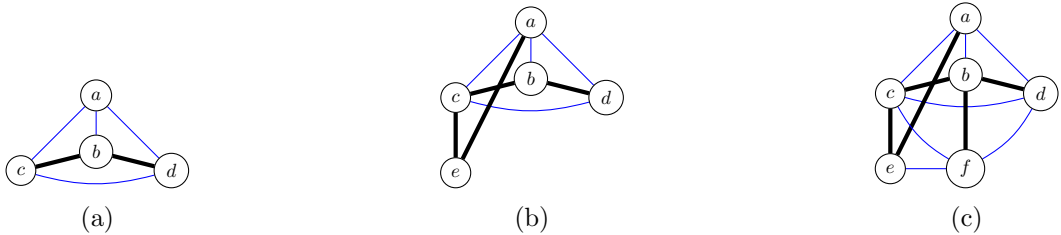
			
			\item Case $2$: In the previous turn, a guard moved from vertex $e$ to vertex $b$ (see Figure \ref{Figure:2Loss2}). In this case, $c$ has a neighbour $f$ which is non-adjacent to $b, d$ and $e$ (otherwise the guards would still be on a dominating set if the guard on $c$ moved to $b$ instead). For the same reason, $d$ has a neighbour $g$ which is not adjacent to $b, c$ and $e$, as shown in Figure \ref{Figure:2Loss3}. Following Fact \ref{Fact:Triangles}, $f$ must be adjacent to $g$, otherwise $\{f,b,g\}$ and $\{f,e,g\}$ would be two thin blue triangles sharing two vertices. By the same fact, $a$ must be adjacent to $f$ as $\{a,c,d\}$ and $\{a,f,d\}$ would share two vertices otherwise. For a similar reason, $a$ must be adjacent to $g$ as $\{a,c,g\}$ and $\{a,c,d\}$ would be two thin blue triangles sharing two vertices otherwise (see Figure \ref{Figure:2Loss4}).
			
			Now, the edges $ce$ and $de$ cannot be both thin blue, otherwise the thin blue triangles $\{a,c,d\}$ and $\{e,c,d\}$ would share two vertices. We assume without loss of generality that the edge $ce$ is black and we consider the two choices for the colour of the edge $de$. 
			
			\begin{figure}[h!]
				\centering
				\begin{subfigure}[b]{0.19\textwidth}
					\centering
					\begin{tikzpicture}[scale=0.45]
						\begin{pgfonlayer}{nodelayer}
							\node [style=vertex] (1) at (0, 2) {$a$};
							\node [style=vertex] (2) at (-2, 0) {$c$};
							\node [style=vertex] (3) at (0, 0.5) {$b$};
							\node [style=vertex] (4) at (2, 0) {$d$};
						\end{pgfonlayer}
						\begin{pgfonlayer}{edgelayer}
							\draw [style=blueedge] (1) to (2);
							\draw [style=blueedge] (1) to (3);
							\draw [style=blueedge] (1) to (4);
							\draw [style=ultra thick] (2) to (3);
							\draw [style=ultra thick] (3) to (4);
							\draw [style=blueedge, bend right=15] (2) to (4);
						\end{pgfonlayer}
					\end{tikzpicture}
					\caption{}
					\label{Figure:2Loss1}
				\end{subfigure}
				\hfill
				\begin{subfigure}[b]{0.19\textwidth}
					\centering
					\begin{tikzpicture}[scale=0.45]
						\begin{pgfonlayer}{nodelayer}
							\node [style=vertex] (1) at (0, 2) {$a$};
							\node [style=vertex] (2) at (-2, 0) {$c$};
							\node [style=vertex] (3) at (0, 0.5) {$b$};
							\node [style=vertex] (4) at (2, 0) {$d$};
							\node [style=vertex] (5) at (0, -2) {$e$};
						\end{pgfonlayer}
						\begin{pgfonlayer}{edgelayer}
							\draw [style=blueedge] (1) to (2);
							\draw [style=blueedge] (1) to (3);
							\draw [style=blueedge] (1) to (4);
							\draw [style=ultra thick] (2) to (3);
							\draw [style=ultra thick] (3) to (4);
							\draw [style=blueedge, bend right=15] (2) to (4);
							\draw [style=ultra thick] (3) to (5);
							\draw [style=ultra thick, bend left=30, looseness=1] (1) to (5);
						\end{pgfonlayer}
					\end{tikzpicture}
					\caption{}
					\label{Figure:2Loss2}
				\end{subfigure}
				\hfill
				\begin{subfigure}[b]{0.19\textwidth}
					\centering
					\begin{tikzpicture}[scale=0.45]
						\begin{pgfonlayer}{nodelayer}
							\node [style=vertex] (1) at (0, 2) {$a$};
							\node [style=vertex] (2) at (-2, 0) {$c$};
							\node [style=vertex] (3) at (0, 0.5) {$b$};
							\node [style=vertex] (4) at (2, 0) {$d$};
							\node [style=vertex] (5) at (0, -2) {$e$};
							\node [style=vertex] (6) at (-3, -3) {\tiny $f$};
							\node [style=vertex] (7) at (3, -3) {$g$};
						\end{pgfonlayer}
						\begin{pgfonlayer}{edgelayer}
							\draw [style=blueedge] (1) to (2);
							\draw [style=blueedge] (1) to (3);
							\draw [style=blueedge] (1) to (4);
							\draw [style=ultra thick] (2) to (3);
							\draw [style=ultra thick] (3) to (4);
							\draw [style=blueedge, bend right=15] (2) to (4);
							\draw [style=ultra thick] (3) to (5);
							\draw [style=ultra thick, bend left] (1) to (5);
							\draw [style=ultra thick] (2) to (6);
							\draw [style=blueedge] (3) to (6);
							\draw [style=blueedge] (5) to (6);
							\draw [style=blueedge, bend left=45] (4) to (6);
							\draw [style=ultra thick] (4) to (7);
							\draw [style=blueedge] (5) to (7);
							\draw [style=blueedge] (3) to (7);
							\draw [style=blueedge, bend right=45] (2) to (7);
						\end{pgfonlayer}
					\end{tikzpicture}
					
					\caption{}
					\label{Figure:2Loss3}
				\end{subfigure}
				\hfill
				\begin{subfigure}[b]{0.19\textwidth}
					\centering
					\begin{tikzpicture}[scale=0.45]
						\begin{pgfonlayer}{nodelayer}
							\node [style=vertex] (1) at (0, 2) {$a$};
							\node [style=vertex] (2) at (-2, 0) {$c$};
							\node [style=vertex] (3) at (0, 0.5) {$b$};
							\node [style=vertex] (4) at (2, 0) {$d$};
							\node [style=vertex] (5) at (0, -2) {$e$};
							\node [style=vertex] (6) at (-3, -3) {\tiny $f$};
							\node [style=vertex] (7) at (3, -3) {$g$};
						\end{pgfonlayer}
						\begin{pgfonlayer}{edgelayer}
							\draw [style=blueedge] (1) to (2);
							\draw [style=blueedge] (1) to (3);
							\draw [style=blueedge] (1) to (4);
							\draw [style=ultra thick] (2) to (3);
							\draw [style=ultra thick] (3) to (4);
							\draw [style=blueedge, bend right=15] (2) to (4);
							\draw [style=ultra thick] (3) to (5);
							\draw [style=ultra thick, bend left] (1) to (5);
							\draw [style=ultra thick] (2) to (6);
							\draw [style=blueedge] (3) to (6);
							\draw [style=blueedge] (5) to (6);
							\draw [style=blueedge, bend left=60, looseness=1.25] (4) to (6);
							\draw [style=ultra thick] (4) to (7);
							\draw [style=blueedge] (5) to (7);
							\draw [style=blueedge] (3) to (7);
							\draw [style=blueedge, bend right=60, looseness=1.25] (2) to (7);
							\draw [style=ultra thick, bend left=45] (1) to (7);
							\draw [style=ultra thick, bend right=45] (1) to (6);
							\draw [style=ultra thick, bend right=45, looseness=0.75] (6) to (7);
						\end{pgfonlayer}
					\end{tikzpicture}
					
					\caption{}
					\label{Figure:2Loss4}
				\end{subfigure}
				\hfill
				\begin{subfigure}[b]{0.19\textwidth}
					\centering
					\begin{tikzpicture}[scale=0.45]
						\begin{pgfonlayer}{nodelayer}
							\node [style=vertex] (1) at (0, 2) {$a$};
							\node [style=vertex] (2) at (-2, 0) {$c$};
							\node [style=vertex] (3) at (0, 0.5) {$b$};
							\node [style=vertex] (4) at (2, 0) {$d$};
							\node [style=vertex] (5) at (0, -2) {$e$};
							\node [style=vertex] (6) at (-3, -3) {\tiny $f$};
							\node [style=vertex] (7) at (3, -3) {$g$};
						\end{pgfonlayer}
						\begin{pgfonlayer}{edgelayer}
							\draw [style=blueedge] (1) to (2);
							\draw [style=blueedge] (1) to (3);
							\draw [style=blueedge] (1) to (4);
							\draw [style=ultra thick] (2) to (3);
							\draw [style=ultra thick] (3) to (4);
							\draw [style=blueedge, bend right=15] (2) to (4);
							\draw [style=ultra thick] (3) to (5);
							\draw [style=ultra thick, bend left] (1) to (5);
							\draw [style=ultra thick] (2) to (6);
							\draw [style=blueedge] (3) to (6);
							\draw [style=blueedge] (5) to (6);
							\draw [style=blueedge, bend left=60, looseness=1.25] (4) to (6);
							\draw [style=ultra thick] (4) to (7);
							\draw [style=blueedge] (5) to (7);
							\draw [style=blueedge] (3) to (7);
							\draw [style=blueedge, bend right=60, looseness=1.25] (2) to (7);
							\draw [style=ultra thick, bend left=45] (1) to (7);
							\draw [style=ultra thick, bend right=45] (1) to (6);
							\draw [style=ultra thick, bend right=45, looseness=0.75] (6) to (7);
							\draw [style=ultra thick] (2) to (5);
						\end{pgfonlayer}
					\end{tikzpicture}
					
					\caption{}
					\label{Figure:2Loss5}
				\end{subfigure}
				
				\caption{Configuration of the guards in Case $2$ of the proof of Proposition \ref{Proposition:SmallestGraph}.}
				\label{Figure:GuardsConfiguration2}
			\end{figure}
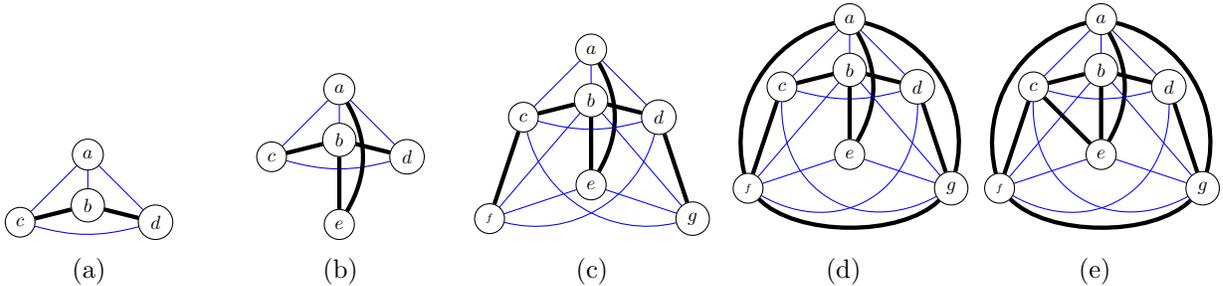			
			
			So, we obtain two blue/black colourings of the edges of the complete graph on $7$ vertices. One of the thin blue subgraphs obtained (Figure \ref{Figure:2Loss6}) must be an induced subgraph of $\overline{G_1}$. 
			
			\begin{figure}[h!]
				\centering
				\begin{tikzpicture}[scale=0.75]
					\begin{pgfonlayer}{nodelayer}
						\node [style=vertex] (1) at (-2.25, 0.5) {$a$};
						\node [style=vertex] (2) at (-3.75, 1.5) {$c$};
						\node [style=vertex] (3) at (-2.25, -0.5) {$b$};
						\node [style=vertex] (4) at (-0.75, 1.5) {$d$};
						\node [style=vertex] (5) at (-2.25, -1.5) {$e$};
						\node [style=vertex] (6) at (-0.75, -1.5) {$f$};
						\node [style=vertex] (7) at (-3.75, -1.5) {$g$};
						\node [style=vertex] (8) at (2.25, 0.5) {$a$};
						\node [style=vertex] (9) at (0.75, 1.5) {$c$};
						\node [style=vertex] (10) at (2.25, -0.5) {$b$};
						\node [style=vertex] (11) at (3.75, 1.5) {$d$};
						\node [style=vertex] (12) at (2.25, -1.5) {$e$};
						\node [style=vertex] (13) at (3.75, -1.5) {$f$};
						\node [style=vertex] (14) at (0.75, -1.5) {$g$};
					\end{pgfonlayer}
					\begin{pgfonlayer}{edgelayer}
						\draw [style=blueedge] (1) to (2);
						\draw [style=blueedge] (1) to (3);
						\draw [style=blueedge] (1) to (4);
						\draw [style=blueedge] (2) to (4);
						\draw [style=blueedge] (3) to (6);
						\draw [style=blueedge] (5) to (6);
						\draw [style=blueedge] (4) to (6);
						\draw [style=blueedge] (5) to (7);
						\draw [style=blueedge] (3) to (7);
						\draw [style=blueedge] (2) to (7);
						\draw [style=blueedge] (8) to (9);
						\draw [style=blueedge] (8) to (10);
						\draw [style=blueedge] (8) to (11);
						\draw [style=blueedge] (9) to (11);
						\draw [style=blueedge] (10) to (13);
						\draw [style=blueedge] (12) to (13);
						\draw [style=blueedge] (11) to (13);
						\draw [style=blueedge] (12) to (14);
						\draw [style=blueedge] (10) to (14);
						\draw [style=blueedge] (9) to (14);
						\draw [style=blueedge] (11) to (12);
					\end{pgfonlayer}
				\end{tikzpicture}
				
				\caption{Induced subgraphs of $\overline{G_1}$.}
				\label{Figure:2Loss6}
			\end{figure}
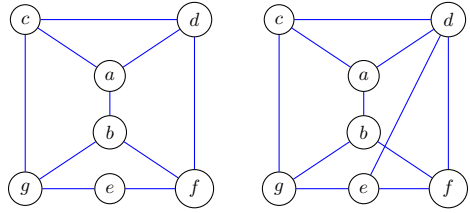
		
			The reader can check that $\overline{G_1}$ does not contain any of those graphs as an induced subgraph. Hence, we obtain a contradiction and conclude that $\gamma^\infty(G_1)=3$.
		\end{itemize}
	\vspace{-12pt}
	\end{proof}
	
	
	In the remaining part of this section, we verify that any graph with fewer than $10$ vertices has eternal domination number equal to its clique covering number. 
	
	\begin{prop} \label{Proposition:GammaInftyEqualTheta}
		For any graph $G$ of order $9$ or less, $\gamma^\infty(G)=\theta(G)$.
	\end{prop}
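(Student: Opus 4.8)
The plan is to establish the equivalent statement that no graph on at most $9$ vertices has $\gamma^\infty < \theta$; together with Fact~\ref{Fact:Bounds}, which gives $\alpha \le \gamma^\infty \le \theta$, this yields the claimed equality. First I would reduce to \emph{critical} graphs. Suppose toward a contradiction that counterexamples on at most $9$ vertices exist, and let $G$ be one on the fewest vertices and, subject to that, with the most edges. If $G$ were not vertex critical, some vertex $v$ would satisfy $\theta(G-v)=\theta(G)$; since $G-v$ is an induced subgraph, Fact~\ref{Fact:Spanning}(a) gives $\gamma^\infty(G-v)\le\gamma^\infty(G)<\theta(G)=\theta(G-v)$, a counterexample on fewer vertices. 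Dually, if $G$ were not edge critical, some non-edge $uv$ would satisfy $\theta(G+uv)=\theta(G)$; as $G$ is a spanning subgraph of $G+uv$, Fact~\ref{Fact:Spanning}(b) gives $\gamma^\infty(G+uv)\le\gamma^\infty(G)<\theta(G)=\theta(G+uv)$, a counterexample with more edges. Hence $G$ is critical, and equivalently $\overline{G}$ is a colour-critical graph (vertex- and edge-critical with respect to $\chi$) with $\chi(\overline{G})=\theta(G)$, and in particular connected.

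Next I would pin down the parameters. Since $\gamma^\infty=\theta$ whenever $\theta\le 3$, we must have $\theta\ge 4$, so $\overline{G}$ is at least $4$-chromatic. Because $\gamma^\infty\ge\alpha$, the strict inequality $\gamma^\infty<\theta$ forces $\alpha<\theta$, i.e.\ $\omega(\overline{G})<\chi(\overline{G})$. Finally, if $\alpha=2$ then $\overline{G}$ is triangle-free with $\chi(\overline{G})\ge 4$, and by the corollary that the complement of the Gr\"otzsch graph is the unique smallest graph with $\alpha=2$ and $\theta\ge 4$, any such graph has at least $11$ vertices; this rules out $\alpha=2$ for $n\le 9$. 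Thus a minimal counterexample $G$ on at most $9$ vertices must satisfy $\alpha\ge 3$ and $\theta\ge 4$, with $\overline{G}$ a connected colour-critical graph containing a triangle but having $\chi>\omega$.

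The remaining task is a finite check: enumerate all connected colour-critical graphs on at most $9$ vertices with $\chi\ge 4$ and $\omega\ge 3$ (equivalently, all critical $G$ with the parameters above) and verify that each has $\gamma^\infty=\theta$. The criticality and parameter reductions shrink the search space dramatically relative to the set of all graphs on at most $9$ vertices, and for each surviving graph one determines $\gamma^\infty$ either directly from the game --- as in the proof of Proposition~\ref{Proposition:SmallestGraph}, by exhibiting an attacker strategy forcing $\theta$ guards --- or via Algorithm~$1$. I expect this last verification to be the main obstacle: $\gamma^\infty$ admits no convenient closed form, so confirming $\gamma^\infty=\theta$ for every residual critical graph is where the real work (and, in practice, the computation described in Section~\ref{Section:Algorithms}) lies, whereas the reductions themselves are routine consequences of Facts~\ref{Fact:Spanning} and~\ref{Fact:Bounds}.
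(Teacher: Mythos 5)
Your proposal is correct and follows essentially the same route as the paper: the paper likewise takes a counterexample of minimum order with the maximum number of edges, uses Facts~\ref{Fact:Spanning} and~\ref{Fact:Bounds} to conclude that such a graph must be critical with $\alpha<\theta$, and then verifies by computer (Algorithm~$1$, applied to the critical graphs enumerated in Table~\ref{Table:Data} and the appendix) that every critical graph on at most $9$ vertices satisfies $\gamma^\infty=\theta$. Your additional pruning steps ($\theta\ge 4$, and $\alpha\ge 3$ via the Gr\"otzsch-complement corollary) are valid but inessential refinements of the same search.
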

	
	Proposition \ref{Proposition:GammaInftyEqualTheta} was verified by computer; the search can be described as follows. Suppose $G$ is a smallest graph such that $\gamma^\infty(G)<\theta(G)$ with the maximum number of edges. Fact \ref{Fact:Bounds} implies that $\alpha(G)<\theta(G)$; moreover, by the choice of $G$ and Fact \ref{Fact:Spanning}, deleting a vertex from the graph or adding a missing edge to the graph decreases its clique covering number by $1$. Consequently, $G$ is a critical graph. Table \ref{Table:Data} shows the number of critical graphs of order $9$ or less and each of these graphs is either drawn in Figures \ref{Figure:Critical7} or \ref{Figure:Critical8}, or listed in Graph6 format in Table \ref{Appendix:Critical} in the appendix. Using the computational methods described in Section \ref{Section:Algorithms} we checked that each of these graphs has eternal domination number equal to its clique covering number.
	
	Using a similar technique, without considering only critical graphs, we found the complete set of $10$-vertex and $11$-vertex graphs with $\gamma^\infty<\theta$. Those graphs are listed in Table \ref{Appendix:GammaInftyTheta} in the appendix in Graph6 format. 

		
		\begin{table}[h!]
		\small
		\begin{center}
			\caption{Number of critical graphs on $n$ vertices with $\gamma^\infty<\theta$.}
			\label{Table:Data}
			\begin{tabularx}{1\textwidth} { 
					| >{\centering\arraybackslash}X 
					| >{\centering\arraybackslash}X 
					| >{\centering\arraybackslash}X
					| >{\centering\arraybackslash}X
					| >{\centering\arraybackslash}X
					| >{\centering\arraybackslash}X | }
				\hline
				$n$ & Total  & $\alpha<\theta$ & Vertex-Critical \& $\alpha<\theta$ & Critical \& $\alpha<\theta$ & Critical \& $\gamma^\infty<\theta$ \\
				\hline
				5 & 21 & 1 & 1 & 1 & 0\\
				6 & 112 & 3 & 0 & 0 & 0\\
				7 & 853 & 33 & 8 & 3 & 0\\
				8 & 11117 & 498 & 7 & 4 & 0\\
				9 & 261080  & 16539 & 353 & 38 & 0\\
				10 & 11716571  & 975676 & 5159 & 290 & 1\\
				\hline
			\end{tabularx}
		\end{center}
	\end{table}

	The graph $C_5$ is the only critical graph of order $5$ with $\alpha<\theta$. We display the critical graphs of order $7$ and $8$ in figures \ref{Figure:Critical7} and \ref{Figure:Critical8}, and list the critical graphs of order $9$ with $\alpha<\theta$ in Graph6 format in the appendix (Table \ref{Appendix:Critical}).

	\begin{figure}[h!]
		\centering
		\begin{subfigure}[b]{0.3\textwidth}
			\centering
			\begin{tikzpicture}
				\begin{pgfonlayer}{nodelayer}
					\node [style=vertex] (0) at (0, 2.3971) {};
					\node [style=vertex] (1) at (0.9708, 1.6918) {};
					\node [style=vertex] (2) at (0.6, 0.5505) {};
					\node [style=vertex] (3) at (-0.6, 0.5505) {};
					\node [style=vertex] (4) at (-0.9708, 1.6918) {};
					\node [style=vertex] (5) at (0.9708, 0) {};
					\node [style=vertex] (6) at (-0.9708, 0) {};
				\end{pgfonlayer}
				\begin{pgfonlayer}{edgelayer}
					\draw (0) to (1);
					\draw (1) to (2);
					\draw (2) to (3);
					\draw (3) to (4);
					\draw (4) to (0);
					\draw (4) to (6);
					\draw (1) to (5);
					\draw (6) to (5);
					\draw (3) to (6);
				\end{pgfonlayer}
			\end{tikzpicture}
			
			\caption{}	
		\end{subfigure}
		\hfill
		\begin{subfigure}[b]{0.3\textwidth}
			\centering
			\begin{tikzpicture}
				\begin{pgfonlayer}{nodelayer}
					\node [style=vertex] (0) at (0, 2.3971) {};
					\node [style=vertex] (1) at (0.9708, 1.6918) {};
					\node [style=vertex] (2) at (0.6, 0.5505) {};
					\node [style=vertex] (3) at (-0.6, 0.5505) {};
					\node [style=vertex] (4) at (-0.9708, 1.6918) {};
					\node [style=vertex] (5) at (0.9708, 0) {};
					\node [style=vertex] (6) at (-0.9708, 0) {};
				\end{pgfonlayer}
				\begin{pgfonlayer}{edgelayer}
					\draw (0) to (1);
					\draw (1) to (2);
					\draw (2) to (3);
					\draw (3) to (4);
					\draw (4) to (0);
					\draw (4) to (6);
					\draw (1) to (5);
					\draw (6) to (5);
					\draw (6) to (2);
					\draw (5) to (3);
				\end{pgfonlayer}
			\end{tikzpicture}

			\caption{}
		\end{subfigure}
		\hfill
		\begin{subfigure}[b]{0.3\textwidth}
			\centering
			\begin{tikzpicture}[scale=0.555]
				\begin{pgfonlayer}{nodelayer}
					\node [style=vertex] (0) at (0, 4.381) {};
					\node [style=vertex] (1) at (1.801, 3.513) {};
					\node [style=vertex] (2) at (2.246, 1.563) {};
					\node [style=vertex] (3) at (1, 0) {};
					\node [style=vertex] (4) at (-1, 0) {};
					\node [style=vertex] (5) at (-2.246, 1.563) {};	
					\node [style=vertex] (6) at (-1.801, 3.513) {};			
				\end{pgfonlayer}
				\begin{pgfonlayer}{edgelayer}
					\draw (0) to (1);
					\draw (1) to (2);
					\draw (2) to (3);
					\draw (3) to (4);
					\draw (4) to (5);
					\draw (5) to (6);
					\draw (6) to (0);
					\draw (0) to (2);
					\draw (1) to (3);
					\draw (2) to (4);
					\draw (3) to (5);
					\draw (4) to (6);
					\draw (5) to (0);
					\draw (6) to (1);
				\end{pgfonlayer}
			\end{tikzpicture}
			\caption{}
		\end{subfigure}
		
		\caption{Critical graphs of order 7 with $\alpha<\theta$.}
		\label{Figure:Critical7}
	\end{figure}


	\begin{figure}[h!]
		\centering
		\begin{subfigure}[b]{0.24\textwidth}
			\centering
			\begin{tikzpicture}
				\begin{pgfonlayer}{nodelayer}
					\node [style=vertex] (6) at (0.9708, 1.1918) {};
					\node [style=vertex] (7) at (0.6, 0.0505) {};
					\node [style=vertex] (8) at (-0.6, 0.0505) {};
					\node [style=vertex] (9) at (-0.9708, 1.1918) {};
					\node [style=vertex] (10) at (0, 0.8763) {};
					\node [style=vertex] (11) at (0, 1.8971) {};
					\node [style=vertex] (14) at (-0.6, 2.3971) {};
					\node [style=vertex] (15) at (0.6, 2.3971) {};
				\end{pgfonlayer}
				\begin{pgfonlayer}{edgelayer}
					\draw (11) to (6);
					\draw (6) to (7);
					\draw (7) to (8);
					\draw (8) to (9);
					\draw (9) to (11);
					\draw (11) to (10);
					\draw (6) to (10);
					\draw (7) to (10);
					\draw (8) to (10);
					\draw (15) to (11);
					\draw (15) to (6);
					\draw [bend left=90, looseness=0.75] (15) to (7);
					\draw (15) to (14);
					\draw [bend right=90, looseness=0.75] (14) to (8);
				\end{pgfonlayer}
			\end{tikzpicture}
			
			\caption{}	
		\end{subfigure}
		\hfill
		\begin{subfigure}[b]{0.24\textwidth}
			\centering
			\begin{tikzpicture}[scale=0.47]
				\begin{pgfonlayer}{nodelayer}
					\node [style=vertex] (0) at (-2.5, 5) {};
					\node [style=vertex] (1) at (2.5, 0) {};
					\node [style=vertex] (2) at (1.5, 1) {};
					\node [style=vertex] (3) at (-1.5, 4) {};
					\node [style=vertex] (4) at (-2.5, 0) {};
					\node [style=vertex] (5) at (1.5, 4) {};
					\node [style=vertex] (6) at (-1.5, 1) {};
					\node [style=vertex] (7) at (2.5, 5) {};
				\end{pgfonlayer}
				\begin{pgfonlayer}{edgelayer}
					\draw (0) to (3);
					\draw (0) to (4);
					\draw (0) to (6);
					\draw (0) to (7);
					\draw (1) to (4);
					\draw (1) to (5);
					\draw (1) to (6);
					\draw (1) to (7);
					\draw (2) to (5);
					\draw (2) to (6);
					\draw (3) to (5);
					\draw (3) to (6);
					\draw (3) to (7);
					\draw (5) to (7);
				\end{pgfonlayer}
			\end{tikzpicture}
			
			\caption{}
		\end{subfigure}
		\hfill
		\begin{subfigure}[b]{0.24\textwidth}
			\centering
			\begin{tikzpicture}[scale=0.775]
				\begin{pgfonlayer}{nodelayer}
					\node [style=vertex] (0) at (-0.5, 1.75) {};
					\node [style=vertex] (1) at (0.5, 1.75) {};
					\node [style=vertex] (2) at (0, 3.0776) {};
					\node [style=vertex] (3) at (-1.618, 1.9021) {};
					\node [style=vertex] (4) at (0, 1.25) {};
					\node [style=vertex] (5) at (1.618, 1.9021) {};
					\node [style=vertex] (6) at (-0.9708, 0) {};
					\node [style=vertex] (7) at (0.9708, 0) {};
				\end{pgfonlayer}
				\begin{pgfonlayer}{edgelayer}
					\draw (0) to (2);
					\draw (2) to (1);
					\draw (1) to (0);
					\draw (0) to (3);
					\draw (3) to (6);
					\draw (1) to (5);
					\draw (5) to (7);
					\draw (7) to (4);
					\draw (4) to (6);
					\draw [bend left=15] (7) to (3);
					\draw [bend left=15] (5) to (6);
					\draw (3) to (2);
					\draw (2) to (5);
					\draw (4) to (0);
					\draw (4) to (1);
				\end{pgfonlayer}
			\end{tikzpicture}
			
			\caption{}			
		\end{subfigure}
		\hfill
		\begin{subfigure}[b]{0.24\textwidth}
			\centering
			\begin{tikzpicture}
				\begin{pgfonlayer}{nodelayer}
					\node [style=vertex] (2) at (0.9708, 0) {};
					\node [style=vertex] (3) at (-0.9708, 0) {};
					\node [style=vertex] (5) at (0, 2.3971) {};
					\node [style=vertex] (6) at (0.9708, 1.6918) {};
					\node [style=vertex] (7) at (0.6, 0.5505) {};
					\node [style=vertex] (8) at (-0.6, 0.5505) {};
					\node [style=vertex] (9) at (-0.9708, 1.6918) {};
					\node [style=vertex] (10) at (0, 1.3763) {};
				\end{pgfonlayer}
				\begin{pgfonlayer}{edgelayer}
					\draw (2) to (3);
					\draw (6) to (10);
					\draw (7) to (10);
					\draw (8) to (10);
					\draw (9) to (10);
					\draw (2) to (6);
					\draw (3) to (9);
					\draw (5) to (6);
					\draw (6) to (7);
					\draw (7) to (8);
					\draw (8) to (9);
					\draw (9) to (5);
					\draw (3) to (8);
					\draw (2) to (7);
				\end{pgfonlayer}
			\end{tikzpicture}
				
			\caption{}
			\label{Figure:G_1}
		\end{subfigure}
		
		\caption{Critical graphs of order 8 with $\alpha<\theta$.}
		\label{Figure:Critical8}
	\end{figure}
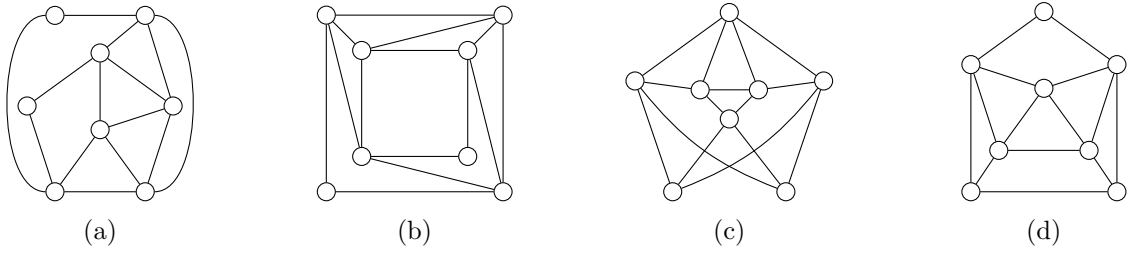


	\newpage
	\subsection{Graph Classes} \label{Section:GraphClasses}
	In this section, we focus on different classes of graphs, starting with triangle-free graphs in Sections \ref{Section:TriangleFree} and \ref{Section:MaximalTriangleFree}, before moving on to circulant graphs in Section \ref{Section:Circulant} and planar graphs in Section \ref{Section:Planar}. We briefly mention claw-free graphs in Section \ref{Section:ClawFree} and cubic graphs in Section \ref{Section:Cubic}.
	
	\subsubsection{Triangle-free graphs} \label{Section:TriangleFree}
	Erdős, Kleitman and Rothschild \cite{erdHos1976asymptotic} showed that almost all triangle-free graphs are bipartite, therefore almost all triangle-free graphs are perfect and satisfy $\gamma^\infty=\theta$. However, Goddard, Hedetniemi and Hedetniemi \cite{goddard2005eternal} showed that there exist triangle-free graphs with $\gamma^\infty<\theta$. They gave the circulant graphs $C_{18}[1, 3, 8]$ and $C_{21}[1, 3, 8]$, which they found using computer assistance, as examples. Using Algorithm $1$, we checked that any triangle-free graph of order $12$ or less has eternal domination number equal to its clique covering number. We found $13$ triangle-free graphs of order $13$ with $\gamma^\infty<\theta$ (see Table \ref{Appendix:TriangleFree}). Previously the smallest known triangle-free graph with this property has $17$ vertices and is a subgraph of the circulant graph $C_{18}[1, 3, 8]$. The following corollary describes a way to generate an infinite family of triangle-free graphs with $\gamma^\infty<\theta$.
	
	\begin{fact} \label{Fact:BowtieTriangleFree}
		Let $G$ be a triangle-free graph. Then $G \bowtie K_2$ is triangle-free.
	\end{fact}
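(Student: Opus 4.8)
The plan is to argue by contradiction, exploiting the fact that in the bow tie product the adjacency of two vertices is governed entirely by their first coordinates as soon as $H = K_2$. First I would record the following simplification of the adjacency rule. Writing $V(K_2) = \{1,2\}$ with the single edge $12$, note that for any two vertices $(u,a)$ and $(u',a')$ of $G \bowtie K_2$ the condition ``$a = a'$ or $a a' \in E(K_2)$'' holds automatically: either the second coordinates coincide, or they are the two distinct vertices of $K_2$, which are adjacent. Consequently $(u,a)$ and $(u',a')$ are adjacent in $G \bowtie K_2$ if and only if $uu' \in E(G)$, so the second coordinate is irrelevant to adjacency. In particular this forces $u \neq u'$, since $G$ has no loops.

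With this reduction in hand, the main step is a projection argument. Suppose, for a contradiction, that $G \bowtie K_2$ contains a triangle on vertices $(u_1, a_1)$, $(u_2, a_2)$, $(u_3, a_3)$. By the simplified adjacency rule, each of the three pairs being adjacent gives $u_1 u_2, \, u_1 u_3, \, u_2 u_3 \in E(G)$. Since edges of $G$ join distinct vertices, the three vertices $u_1, u_2, u_3$ are pairwise distinct and pairwise adjacent, hence they induce a triangle in $G$, contradicting the hypothesis that $G$ is triangle-free.

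There is no genuine obstacle here: the only point requiring (minor) care is verifying that the three projected vertices $u_1, u_2, u_3$ are distinct, which follows immediately from the absence of loops in $G$. I would remark in passing that the same projection argument applies to any clique of $G \bowtie K_2$, whose first coordinates form a clique of $G$; thus in fact $\omega(G \bowtie K_2) = \omega(G)$, but only the triangle-free case is needed for \ref{Fact:BowtieTriangleFree}.
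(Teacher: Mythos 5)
Your proof is correct: the key simplification---that with $H = K_2$ the condition on second coordinates (``equal or adjacent'') is vacuously satisfied, so adjacency in $G \bowtie K_2$ depends only on the first coordinates---is exactly the right observation, and projecting any triangle of $G \bowtie K_2$ onto first coordinates then produces a triangle in $G$. The paper states this fact without proof, and your argument is precisely the routine verification the authors left to the reader, so there is nothing further to compare.
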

	
	\begin{cor} \label{Corollary:TriangleFreeInfinite}
		Let $G$ be a triangle-free graph such that $\gamma^\infty(G)<\theta(G)=\lceil \frac{n}{2} \rceil$. Then, $\gamma^\infty(G \bowtie K_2) < \theta(G \bowtie K_2)$.
	\end{cor}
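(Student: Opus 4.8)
The plan is to compare the two parameters of $H := G \bowtie K_2$ directly with those of $G$, invoking the hypothesis $\theta(G)=\lceil n/2\rceil$ (where $n=|V(G)|$) only at the very end to separate them. Writing $V(K_2)=\{0,1\}$, the definition of the bow tie product gives that $(u,a)$ and $(v,b)$ are adjacent in $H$ exactly when $uv\in E(G)$, for every choice of $a,b\in\{0,1\}$. Thus the two copies $(v,0),(v,1)$ of a vertex are non-adjacent, while for each $a$ the \emph{layer} $L_a=\{(v,a):v\in V(G)\}$ induces a copy of $G$. Note that $|V(H)|=2n$ and that, by Fact \ref{Fact:BowtieTriangleFree}, $H$ is triangle-free.

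The crux is the bound $\gamma^\infty(H)\le 2\gamma^\infty(G)$, which I would prove by letting the defender run two independent copies of an optimal $G$-game, one on each layer. Start with $\gamma^\infty(G)$ guards on $L_0$ and $\gamma^\infty(G)$ guards on $L_1$, each set placed on the copies (within its own layer) of the vertices of an eternal dominating set of $G$; these $2\gamma^\infty(G)$ positions are distinct. Since $L_a$ induces $G$ and the guards in $L_a$ project to a dominating set of $G$, layer $a$ alone dominates every vertex of $L_a$, so the whole configuration dominates $H$. When the attacker attacks $(v,a)$, the vertex $v$ is unoccupied in layer $a$'s game, so that game prescribes moving a guard from some $u$ with $uv\in E(G)$ to $v$; moving the corresponding guard from $(u,a)$ to $(v,a)$ is a legal move in $H$ and keeps $L_a$ dominating. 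Because every attack is answered entirely within one layer, each layer forever maintains an eternal dominating set of $G$, and the guards defend indefinitely.

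The remaining steps are routine. For a lower bound on $\theta(H)$ I would use triangle-freeness alone: every clique of $H$ has at most two vertices and $H$ has $2n$ vertices, so any clique covering needs at least $n$ cliques, giving $\theta(H)\ge n$. Finally I would combine the estimates: since $\gamma^\infty(G)<\theta(G)$ are integers we have $\gamma^\infty(G)\le\theta(G)-1$, and $\theta(G)=\lceil n/2\rceil$ gives $2\theta(G)\le n+1$, whence
\[
\gamma^\infty(H)\le 2\gamma^\infty(G)\le 2\theta(G)-2\le n-1< n\le\theta(H),
\]
which is exactly $\gamma^\infty(G\bowtie K_2)<\theta(G\bowtie K_2)$. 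The only step requiring real work is the two-layer strategy establishing $\gamma^\infty(H)\le 2\gamma^\infty(G)$; notably, no exact value of $\theta(H)$ (nor of the matching number of $H$) is needed, the trivial bound $\theta(H)\ge n$ already sufficing because the hypothesis forces $\theta(G)$ to be as small as possible.
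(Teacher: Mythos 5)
Your proposal is correct and takes essentially the same approach as the paper: triangle-freeness of $G \bowtie K_2$ (via Fact \ref{Fact:BowtieTriangleFree}) gives $\theta(G \bowtie K_2) \geq n$, and the partition into two layers each inducing $G$ gives $\gamma^\infty(G \bowtie K_2) \leq 2\gamma^\infty(G) \leq 2\lceil n/2\rceil - 2 \leq n-1$. Your explicit two-layer guard strategy simply spells out the step the paper states as an observation, namely that $\gamma^\infty$ is subadditive over a vertex partition into induced subgraphs.
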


	\begin{proof}
		Let $G$ be a triangle-free graph such that $\gamma^\infty(G)<\theta(G)=\lceil \frac{n}{2} \rceil$. Fact \ref{Fact:BowtieTriangleFree} implies that $G \bowtie K_2$ is triangle-free; as a result, $\theta(G \bowtie K_2) \geq n$. Observe that the vertices of $G \bowtie K_2$ can be partitionned into two subsets, each of which induces a subgraph isomorphic to $G$. This means that $\gamma^\infty(G \bowtie K_2) \leq 2 \gamma^\infty(G)<2 \lceil \frac{n}{2} \rceil$; consequently, $\gamma^\infty(G \bowtie K_2) \leq n-1$.
	\end{proof}		

	Klostermeyer and Mynhardt \cite{klostermeyer2015domination} posed the following question.
	
	\begin{que} [\cite{klostermeyer2015domination}] \label{Conjecture:AlphaTheta}
		Does there exist a triangle-free graph $G$ such that $\alpha(G)=\gamma^\infty(G)<\theta(G)$?
	\end{que}

	Note that the $13$ triangle-free graphs on $13$ vertices with $\gamma^\infty<\theta$ we found also satisfy $\alpha<\gamma^\infty<\theta$. In Proposition \ref{Proposition:TriangleFree}, we describe a property of a smallest such a graph (if it exists), then we show by using computer assistance that no graph of order $14$ or less satisfies this property.
	
	\begin{prop} \label{Proposition:TriangleFree}
		Suppose there exist triangle-free graphs with $\alpha=\gamma^\infty<\theta$ and let $G$ be such a graph with minimum order. Let $X$ be a maximum independent set of $G$ and let $Y=V(G)-X$. Then $|X|=|Y|-1$.
	\end{prop}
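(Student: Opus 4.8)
The plan is to combine the minimality of $G$ with the elementary fact that, for a triangle-free graph, a minimum clique cover is obtained by taking the edges of a maximum matching as the $2$-element cliques and the remaining vertices as singletons, so that $\theta(G)=n-\mu(G)$, where $\mu(G)$ is the matching number. Note too that, as $X$ is a maximum independent set, $Y$ is a minimum vertex cover with $|Y|=n-\alpha(G)$, and that we may assume $G$ is connected (a component of a disconnected minimal example would itself be a smaller triangle-free example with $\alpha=\gamma^\infty<\theta$). First I would pin down $\mu(G)$. Fix $v\in Y$: since $X$ is still independent in $G-v$ and an induced subgraph has no larger independence number, $\alpha(G-v)=\alpha(G)$, and by Fact \ref{Fact:Bounds} and Fact \ref{Fact:Spanning}(a), $\alpha(G)=\alpha(G-v)\le\gamma^\infty(G-v)\le\gamma^\infty(G)=\alpha(G)$, so $\gamma^\infty(G-v)=\alpha(G)$. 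As $G-v$ is a smaller triangle-free graph, minimality forbids $\alpha(G-v)=\gamma^\infty(G-v)<\theta(G-v)$, forcing $\theta(G-v)=\alpha(G)$, i.e. $\mu(G-v)=(n-1)-\alpha(G)=|Y|-1$. Since $\gamma^\infty(G)=\alpha(G)<\theta(G)=n-\mu(G)$ gives $\mu(G)\le|Y|-1$, while $\mu(G)\ge\mu(G-v)$, I conclude $\mu(G)=|Y|-1$ and $\theta(G)=\alpha(G)+1$; moreover every $v\in Y$ is missed by some maximum matching, so $Y\subseteq D(G)$ in the Gallai--Edmonds decomposition.

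For the upper bound $|Y|\le|X|+1$, observe that $G$ has no perfect matching: otherwise $\mu(G)=n/2$, whereas $\mu(G-v)\le(n-1)/2<n/2$ for $v\in Y$, contradicting the previous step. Hence the number of vertices missed by a maximum matching satisfies $n-2\mu(G)\ge1$; substituting $\mu(G)=|Y|-1$ and $n=|X|+|Y|$ gives $|X|-|Y|+2\ge1$, that is $|Y|\le|X|+1$.

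The hard part is the matching lower bound $|Y|\ge|X|+1$, and I would extract it from the Gallai--Edmonds structure theorem. Writing $D=D(G)$, $A=A(G)$, $C=C(G)$, the containment $Y\subseteq D$ together with the independence of $X$ (so that every vertex of $X$ has all its neighbours in $Y\subseteq D$) forces $C\subseteq X$; but $G[C]$ has a perfect matching while $G[X]$ is edgeless, so $C=\emptyset$, and likewise $A\subseteq X$. In each factor-critical component $D_i$ of $G[D]$, set $X_i=X\cap D_i$ and $Y_i=Y\cap D_i$ (connectedness rules out an isolated $X$-vertex, so $Y_i\neq\emptyset$); deleting any $w\in Y_i$ leaves a graph with a perfect matching in which the independent set $X_i$, whose vertices are adjacent only to $Y_i$, must be matched into $Y_i-w$, giving $|X_i|\le|Y_i|-1$. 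Summing over the $c$ components, using $\sum_i|X_i|=|X|-|A|$ and the deficiency identity $n-2\mu(G)=c-|A|$, I obtain $|Y|\ge(|X|-|A|)+c=|X|+\bigl(n-2\mu(G)\bigr)$. Substituting $n-2\mu(G)=|X|-|Y|+2$ then yields $|Y|\ge2|X|-|Y|+2$, hence $|Y|\ge|X|+1$. Together with the upper bound this gives $|X|=|Y|-1$.

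I expect the Gallai--Edmonds step to be the main obstacle: its role is precisely to show the matching deficiency is at most $1$, and the delicate points are deducing $C=\emptyset$ and $A\subseteq X$ from $Y\subseteq D$, and the per-component inequality $|X_i|\le|Y_i|-1$, where factor-criticality and the independence of $X$ must be used together. The reason a naive shortcut (claiming $G$ itself is factor-critical) fails is exactly the possible presence of $X$-vertices covered by \emph{every} maximum matching, i.e. the set $A$; these need not lie in $D$, so the component-by-component count, rather than a single global statement, is what carries the argument.
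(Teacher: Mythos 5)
Your proof is correct, but it reaches the harder inequality by a genuinely different route than the paper. Both arguments treat the easy direction the same way: delete a vertex $v \in Y$, use Facts \ref{Fact:Spanning} and \ref{Fact:Bounds} plus minimality, and exploit $\theta = n - \mu$ for triangle-free graphs. For the hard direction $|Y| \geq |X|+1$, the paper stays with Hall's theorem: assuming $|X| \geq |Y|$, it forms the bipartite graph $G'$ between $Y$ and $X$, observes that a matching of $G'$ saturating $Y$ would force $\alpha(G)=\theta(G)$, extracts a deficient set $S \subseteq Y$, and shows that $H = G[S \cup N_{G'}(S)]$ is itself a smaller triangle-free counterexample. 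The crucial step there is game-theoretic: since $\gamma^\infty(G)=\alpha(G)=|X|$, the attacker can force all guards onto $X$ and thereafter attack only inside $H$; because guards move only onto attacked vertices and the vertices of $X - N_{G'}(S)$ have no neighbours in $H$, the guards outside $H$ are frozen, whence $\gamma^\infty(H)=\alpha(H)=|N_{G'}(S)| < \theta(H)$, and minimality is invoked a second time. You instead invoke minimality only once --- to obtain $\mu(G-v)=\mu(G)=|Y|-1$ for every $v \in Y$, i.e. $Y \subseteq D(G)$ --- and then finish with pure matching theory via Gallai--Edmonds: $C=\emptyset$ and $A \subseteq X$, the per-component bound $|X_i| \leq |Y_i|-1$ from factor-criticality, and the deficiency formula $n-2\mu(G)=c-|A|$. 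The trade-off is clear: the paper's route needs only Hall's theorem but hinges on the delicate guard-confinement argument (and, as a by-product, locates an explicit smaller counterexample $H$); your route uses the heavier Gallai--Edmonds structure theorem but quarantines all eternal-domination reasoning in the single deletion step, and it yields extra structure for free, namely $\theta(G)=\alpha(G)+1$ and the fact that every maximum matching of a minimal counterexample misses exactly one vertex.
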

	
	\begin{proof}
		It is clear that $|X| \geq |Y|-1$, otherwise, by Fact \ref{Fact:Spanning}, $\alpha(G-\{v\})=\gamma^\infty(G-\{v\})<\theta(G-\{v\})$ for any $v \in Y$, and $G-\{v\}$ would be a smaller triangle-free graph with $\alpha=\gamma^\infty<\theta$. So, it remains to show that $|X| \leq |Y|-1$. Suppose this is false, in other words $|X| \geq |Y|$. Let $G'$ be the spanning bipartite subgraph of $G$ obtained by deleting all edges having both endpoints in $Y$. Observe that the graph $G'$ does not contain a matching that covers all of the vertices in $Y$, otherwise, $G$ would contain a matching that matches each vertex in $Y$ to a vertex in $X$, and this would imply that $\alpha(G)=\theta(G)$ (contradiction). Consequently, by Hall's matching condition, $Y$ contains a subset $S$ such that $|N_{G'}(S)|<|S|$. Now, consider the subgraph $H$ of $G$ induced by $S \cup N_{G'}(S)$. Since there is no edge between a vertex in $H$ and a vertex in $X-V(H)$, we have $\alpha(H)=|N_{G'}(S)|$, otherwise $G$ contains an independent set of size at least $|X|+1$. Moreover, since $\gamma^\infty(G)=|X|$ we must have $\gamma^\infty(H)=|N_{G'}(S)|$: this is true because the attacker may force the $|X|$ guards to be located in $X$ and from there only attacks the vertices in $H$. In this case, only the $|N_{G'}(S)|$ guards are able to respond to that sequence of attacks on $H$. Hence, $H$ would be a smaller triangle-free graph with $\alpha=\gamma^\infty<\theta$.
	\end{proof}
	
	\begin{figure}[h]
		\centering
		\begin{subfigure}[b]{0.45\textwidth}
			\centering
			\begin{tikzpicture}
			\begin{pgfonlayer}{nodelayer}
			\node [style=vertex] (0) at (-3, 3.5) {};
			\node [style=vertex] (1) at (-1.5, 3.5) {};
			\node [style=vertex] (2) at (0, 3.5) {};
			\node [style=vertex] (3) at (1.5, 3.5) {};
			\node [style=vertex] (5) at (0.75, 0.5) {};
			\node [style=vertex] (6) at (-1.5, 0.5) {};
			\node [style=vertex] (7) at (0, 0.5) {};
			\node [style=none] (10) at (-3.5, 3.75) {};
			\node [style=none] (11) at (-3.5, 3.25) {};
			\node [style=none] (12) at (2, 3.75) {};
			\node [style=none] (13) at (2, 3.25) {};
			\node [style=none] (14) at (-3.5, 0.75) {};
			\node [style=none] (15) at (-3.5, 0.25) {};
			\node [style=none] (16) at (2, 0.25) {};
			\node [style=none] (17) at (2, 0.75) {};
			\node [style=none] (21) at (-4, 3.5) {Y};
			\node [style=none] (22) at (-4, 0.5) {X};
			\node [style=vertex] (23) at (-2.25, 3.5) {};
			\node [style=vertex] (24) at (-2.25, 0.5) {};
			\node [style=vertex] (25) at (-0.75, 3.5) {};
			\node [style=vertex] (26) at (-0.75, 0.5) {};
			\node [style=vertex] (27) at (0.75, 3.5) {};
			\end{pgfonlayer}
			\begin{pgfonlayer}{edgelayer}
			\draw (10.center) to (12.center);
			\draw (13.center) to (11.center);
			\draw [bend right=90, looseness=1.25] (10.center) to (11.center);
			\draw [bend left=90, looseness=1.25] (12.center) to (13.center);
			\draw [bend right=90, looseness=1.25] (14.center) to (15.center);
			\draw [bend left=90, looseness=1.25] (17.center) to (16.center);
			\draw (17.center) to (14.center);
			\draw (15.center) to (16.center);
			\draw (1) to (6);
			\draw (2) to (7);
			\draw (23) to (24);
			\draw (25) to (26);
			\draw (27) to (5);
			\end{pgfonlayer}
			\end{tikzpicture}
				
			\caption{}
			\label{Figure:AlphaThetaCase1}	
		\end{subfigure}
		\hfill
		\begin{subfigure}[b]{0.45\textwidth}
			\centering
			\begin{tikzpicture}
			\begin{pgfonlayer}{nodelayer}
			\node [style=vertex] (0) at (-2.25, 3) {};
			\node [style=vertex] (1) at (-0.75, 3) {};
			\node [style=vertex] (2) at (0.75, 3) {};
			\node [style=vertex] (3) at (2.25, 3) {};
			\node [style=vertex] (5) at (-2.25, 0) {};
			\node [style=vertex] (6) at (-0.75, 0) {};
			\node [style=vertex] (7) at (0.75, 0) {};
			\node [style=vertex] (8) at (2.25, 0) {};
			\node [style=none] (10) at (-3.25, 3.25) {};
			\node [style=none] (11) at (-3.25, 2.75) {};
			\node [style=none] (12) at (3.25, 3.25) {};
			\node [style=none] (13) at (3.25, 2.75) {};
			\node [style=none] (14) at (-3.25, 0.25) {};
			\node [style=none] (15) at (-3.25, -0.25) {};
			\node [style=none] (16) at (3.25, -0.25) {};
			\node [style=none] (17) at (3.25, 0.25) {};
			\node [style=none] (21) at (3.75, 3) {Y};
			\node [style=none] (22) at (3.75, 0) {X};
			\node [style=vertex] (23) at (-1.5, 3) {};
			\node [style=vertex] (24) at (-1.5, 0) {};
			\node [style=vertex] (25) at (0, 3) {};
			\node [style=vertex] (26) at (0, 0) {};
			\node [style=vertex] (27) at (1.5, 3) {};
			\node [style=vertex] (28) at (1.5, 0) {};
			\node [style=none] (29) at (-1.75, 3.15) {};
			\node [style=none] (30) at (-1.75, 2.85) {};
			\node [style=none] (31) at (1.75, 3.15) {};
			\node [style=none] (32) at (1.75, 2.85) {};
			\node [style=none] (34) at (-1, 0.15) {};
			\node [style=none] (35) at (-1, -0.15) {};
			\node [style=none] (36) at (1, 0.15) {};
			\node [style=none] (37) at (1, -0.15) {};
			\node [style=none] (38) at (0, 3.5) {S};
			\node [style=none] (39) at (0, 0.5) {$N_{G'}(S)$};
			\end{pgfonlayer}
			\begin{pgfonlayer}{edgelayer}
			\draw (10.center) to (12.center);
			\draw (13.center) to (11.center);
			\draw [bend right=90, looseness=1.25] (10.center) to (11.center);
			\draw [bend left=90, looseness=1.25] (12.center) to (13.center);
			\draw [bend right=90, looseness=1.25] (14.center) to (15.center);
			\draw [bend left=90, looseness=1.25] (17.center) to (16.center);
			\draw (17.center) to (14.center);
			\draw (15.center) to (16.center);
			\draw (0) to (5);
			\draw (1) to (6);
			\draw (2) to (7);
			\draw (3) to (8);
			\draw (25) to (26);
			\draw [bend right=90, looseness=1.25] (29.center) to (30.center);
			\draw [bend left=90, looseness=1.25] (31.center) to (32.center);
			\draw [bend right=90, looseness=1.25] (34.center) to (35.center);
			\draw [bend left=90, looseness=1.25] (36.center) to (37.center);
			\draw (29.center) to (31.center);
			\draw (32.center) to (30.center);
			\draw (34.center) to (36.center);
			\draw (35.center) to (37.center);
			\end{pgfonlayer}
			\end{tikzpicture}
									
			\caption{}
			\label{Figure:AlphaThetaCase2}	
		\end{subfigure}
		\caption{Cases considered in the proof of Proposition \ref{Proposition:TriangleFree}.}
		\label{Figure:AlphaTheta}
	\end{figure}
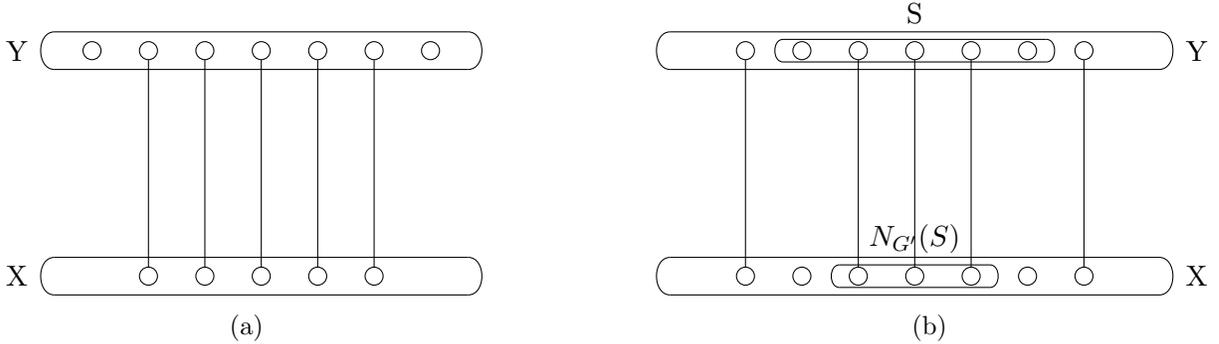
	
	Using the property stated in Proposition \ref{Proposition:TriangleFree}, to find the smallest triangle-free graph $G$ with $\alpha=\gamma^\infty<\theta$ we only need to check the triangle-free graphs of order $2k+1$ with $\alpha=k$ and $\theta=k+1$. Again, using NAUTY, we generated the set of triangle-free graphs of odd orders on fewer than $14$ vertices; then, we computed the independence number and the clique covering number of each of these graphs using the algorithms described above. If $\alpha=\frac{n-1}{2}$ and $\theta=\frac{n+1}{2}$, then we computed the eternal domination number of the graph. We performed the computation listed in Table \ref{Table:AlphaTheta} on a cluster which required approximately $218$ CPU days.
	
	\begin{obs} \label{Observation:AlphaTheta}
		There are no triangle-free graphs on $n \leq 14$ vertices with $\alpha=\gamma^\infty<\theta$.
	\end{obs}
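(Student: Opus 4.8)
The statement is ultimately a finite verification, and the plan is to use Proposition \ref{Proposition:TriangleFree} to make that verification tractable. The key reduction is the following. If the class of triangle-free graphs with $\alpha=\gamma^\infty<\theta$ is nonempty, a graph $G$ of minimum order in it has $|X|=|Y|-1$ for a maximum independent set $X$ and $Y=V(G)\setminus X$, so its order $n=|X|+|Y|$ is odd, say $n=2k+1$, with $\alpha(G)=|X|=k$. Consequently, if any triangle-free graph with $\alpha=\gamma^\infty<\theta$ had order at most $14$, then the class would be nonempty and its minimum-order member would be odd and of order at most $13$. Thus it suffices to rule out the odd orders $n\in\{5,7,9,11,13\}$; the even orders never need to be examined directly.

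Within each odd order I would narrow the search with two cheap filters applied before the expensive eternal-domination computation. First, since $\alpha=\omega(\overline{G})$, I compute the independence number as a clique number in the complement and keep only the graphs with $\alpha=k=(n-1)/2$. Second, because $G$ is triangle-free every clique is a single vertex or an edge, so $\theta(G)=n-\mu(G)$, where $\mu(G)$ is the matching number, which is computable in polynomial time; the requirement $\theta>\alpha$ then reads exactly $\mu(G)<n-\alpha$, and I keep only the graphs satisfying it (the extremal case being $\theta=k+1$, i.e.\ a near-perfect matching of size $k$). For each surviving candidate I run Algorithm $1$ with parameter $k$: since $\gamma^\infty\ge\alpha=k$ always by Fact \ref{Fact:Bounds}, Algorithm $1$ certifies $\gamma^\infty\le k$ precisely when $\gamma^\infty=k$, so a candidate is a counterexample exactly when the algorithm returns this certificate.

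Carrying this out — generating the triangle-free graphs of each odd order at most $13$ with NAUTY, applying the two filters, and running Algorithm $1$ on the survivors — the enumeration produces no graph with $\gamma^\infty=\alpha<\theta$, as recorded in Table \ref{Table:AlphaTheta}. By the reduction above, this shows that the minimum order of a triangle-free graph with $\alpha=\gamma^\infty<\theta$, should such a graph exist at all, is at least $15$; in particular none has order at most $14$, which is the assertion.

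The main obstacle here is computational rather than mathematical: Algorithm $1$ is exponential in $n$ and the number of triangle-free graphs grows rapidly with $n$, so a blind enumeration is hopeless. The feasibility of the approach rests entirely on the two filters — the parity and independence-number restriction inherited from Proposition \ref{Proposition:TriangleFree}, and the matching-number evaluation of $\theta$ available for triangle-free graphs — which together cut the candidate pool down to the handful of graphs on which $\gamma^\infty$ must genuinely be computed. Even after this pruning the surviving computation is heavy (on the order of hundreds of CPU days, as reflected in Table \ref{Table:AlphaTheta}), and order $13$ is near the practical limit of the method.
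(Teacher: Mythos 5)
Your proposal is correct and follows essentially the same route as the paper: Proposition \ref{Proposition:TriangleFree} is used to reduce the search to triangle-free graphs of odd order at most $13$ with $\alpha=(n-1)/2$, the matching formula $\theta=n-\mu$ handles the clique covering number, and Algorithm $1$ decides $\gamma^\infty$ on the surviving candidates, exactly as recorded in Table \ref{Table:AlphaTheta}. The only (harmless) difference is that the paper further restricts the expensive Algorithm $1$ runs to graphs with $\theta=\lceil n/2\rceil$ exactly, whereas your filter $\theta>\alpha$ is automatically satisfied by every triangle-free graph of odd order with $\alpha=(n-1)/2$, so you would test a few more graphs (e.g.\ the three graphs at $n=13$ with $\theta>7$) than the paper did.
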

	
	\begin{table}[h!]
		\small
		\begin{center}
			\caption{Number of connected triangle-free graphs on $n$ vertices with $\alpha=\lfloor \frac{n}{2} \rfloor$ and $\theta=\lceil \frac{n}{2} \rceil$.}
			\label{Table:AlphaTheta}
			\begin{tabularx}{1\textwidth} { 
					| >{\centering\arraybackslash}X 
					| >{\centering\arraybackslash}X 
					| >{\centering\arraybackslash}X
					| >{\centering\arraybackslash}X
					| >{\centering\arraybackslash}X | }
				\hline
				$n$ & Total & $\alpha=\lfloor \frac{n}{2} \rfloor$ & $\alpha= \lfloor \frac{n}{2} \rfloor$ \& $\theta= \lceil \frac{n}{2} \rceil$ & $\alpha=\gamma^\infty=\lfloor \frac{n}{2} \rfloor$ \& $\theta= \lceil \frac{n}{2} \rceil$ \\
				\hline
				5 & 6 & 1 & 1 & 0\\
				7 & 59 & 8 & 8 & 0\\
				9 & 1380 & 276 & 276 & 0\\
				11 & 90842 & 29660 & 29660 & 0\\
				13 & 19425052 & 9606337 & 9606334 & 0\\
				\hline
			\end{tabularx}
		\end{center}
	\end{table}
	
	
	\newpage
	\subsubsection{Maximal triangle-free graphs} \label{Section:MaximalTriangleFree}
	
	A graph $G$ is said to be \textit{maximal triangle-free} if $G$ is triangle-free and the insertion of any missing edge in $G$ creates a triangle. The maximal triangle-free graphs are considered to be an interesting family of graphs since several problems on triangle-free graphs can be studied by restricting attention to maximal triangle-free graphs. Observe that if $G$ is a triangle-free graph with $\lfloor \frac{n}{2} \rfloor = \alpha = \gamma^\infty < \theta = \lceil \frac{n}{2} \rceil$, then $G$ is a subgraph of a maximal triangle-free graph with $\gamma^\infty < \theta = \lceil \frac{n}{2} \rceil$. This follows since the clique covering number of a triangle-free graph $G$ is at least $\lceil \frac{n}{2} \rceil$ and adding any missing edge to $G$ that does not create a triangle increases neither its clique covering number nor its eternal domination number.
	
	\begin{prop} \label{Proposition:MaximalTriangleFree}
		Suppose there exist maximal triangle-free graphs with $\alpha=\gamma^\infty<\theta$ and let $G$ be such a graph with minimum order. Let $X$ be a maximum independent set of $G$ and $Y=V(G)-X$. Then $|X|=|Y|-1$.
	\end{prop}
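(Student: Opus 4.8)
The plan is to mirror the proof of Proposition \ref{Proposition:TriangleFree} and establish the two inequalities $|X| \ge |Y|-1$ and $|X| \le |Y|-1$ separately, so that together they force $|X| = |Y|-1$. The one genuinely new feature, compared with the triangle-free case, is that every smaller graph produced by a reduction must be turned into a \emph{maximal} triangle-free graph in the same class before the minimality of $G$ can be invoked; keeping the reductions clean enough for this to be possible is the heart of the matter.

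For $|X| \ge |Y|-1$ I would argue by contradiction, assuming $|Y| \ge |X|+2$, and delete vertices of $Y$ one at a time. Deleting $v \in Y$ leaves $X$ as a maximum independent set, so $\alpha$ is unchanged; Fact \ref{Fact:Spanning} together with Fact \ref{Fact:Bounds} squeezes $\gamma^\infty$, so $\alpha = \gamma^\infty = |X|$ persists, while the triangle-free bound $\theta \ge \lceil n/2 \rceil$ keeps $\theta > |X|$ as long as at least $|X|+1$ vertices of $Y$ survive. Deleting down to $|Y| = |X|+1$ then yields a triangle-free graph $F$ of odd order $2|X|+1$ with $\lfloor |F|/2\rfloor = \alpha(F) = \gamma^\infty(F) < \theta(F)$.

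For $|X| \le |Y|-1$ I would copy the Hall's-condition argument: assuming $|X| \ge |Y|$, the spanning bipartite subgraph $G'$ obtained by deleting all edges inside $Y$ has no matching saturating $Y$ (such a matching would give $\theta = \alpha$, contradicting $\alpha < \theta$), so some $S \subseteq Y$ violates Hall's condition; choosing $S$ minimal forces $|N_{G'}(S)| = |S|-1$, and $H = G[S \cup N_{G'}(S)]$ is triangle-free of odd order $2|S|-1$ with $\alpha(H) = \gamma^\infty(H) = |S|-1 = \lfloor |V(H)|/2\rfloor < \theta(H)$, the value of $\gamma^\infty(H)$ coming, as in Proposition \ref{Proposition:TriangleFree}, from the attacker strategy that first forces the guards onto $X$ and then attacks only inside $H$. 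Thus in both cases one is left with a triangle-free graph $F$ of odd order carrying $\alpha(F) = \gamma^\infty(F) = \lfloor |F|/2\rfloor < \theta(F)$ on fewer than $n$ vertices.

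The final step is to complete $F$ to a maximal triangle-free graph $F^{\ast}$ on the same vertex set. Since adding triangle-free edges never raises $\theta$ and $F$ is a spanning subgraph of $F^{\ast}$, Fact \ref{Fact:Spanning} gives $\gamma^\infty(F^{\ast}) \le \gamma^\infty(F) = \lfloor |F|/2\rfloor < \lceil |F|/2\rceil \le \theta(F^{\ast})$ automatically, so $F^{\ast}$ is a smaller maximal triangle-free graph with $\gamma^\infty < \theta$. The hard part will be the remaining equality $\alpha(F^{\ast}) = \gamma^\infty(F^{\ast})$: a priori the completion can drop the independence number, producing a maximal triangle-free graph with $\alpha < \gamma^\infty < \theta$ rather than one in our class. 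The main obstacle is therefore to carry out the completion while keeping some maximum independent set $X_F$ of $F$ independent, which reduces to showing that every pair of vertices of $X_F$ can be assigned a common neighbour outside $X_F$ without creating a triangle, so that no edge inside $X_F$ is ever forced; establishing this (or otherwise excluding $\alpha < \gamma^\infty$ in the completion) is exactly what separates this proposition from its triangle-free analogue.
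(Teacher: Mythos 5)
Your first two paragraphs \emph{are} the paper's proof: the paper disposes of Proposition \ref{Proposition:MaximalTriangleFree} with the single sentence ``Same proof as Proposition \ref{Proposition:TriangleFree},'' and the vertex-deletion argument for $|X|\geq|Y|-1$ and the Hall's-condition argument for $|X|\leq|Y|-1$ that you write out (with the harmless refinements of deleting down to $|Y|=|X|+1$ and taking $S$ minimal) are exactly that proof. The problem is your third paragraph, and you have diagnosed it yourself: as submitted, your argument is incomplete, because both reductions produce graphs that are merely triangle-free, and you cannot show they can be completed to \emph{maximal} triangle-free graphs still satisfying $\alpha=\gamma^\infty<\theta$. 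That obstacle is genuine, not a technicality you can wave away: adding edges can only decrease $\alpha$ and $\gamma^\infty$ (Fact \ref{Fact:Spanning}), so $\alpha(F)=\gamma^\infty(F)$ gives no lower bound forcing $\alpha(F^\ast)=\gamma^\infty(F^\ast)$; moreover a completion can genuinely be forced to lose independence number (every maximal triangle-free graph on $n\geq 2$ vertices contains an edge, so every completion of $\overline{K_n}$ drops $\alpha$), and even the greedy strategy of only adding edges with at most one endpoint in a fixed maximum independent set $X_F$ can stall before maximality is reached (in the path $x_1y_1y_2x_2$ the unique addable edge is $x_1x_2$). So your proposal does not prove the proposition.

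For the comparison you were asked to make: the paper never performs, or even mentions, the completion step. Its one-line proof runs the reductions and treats the resulting smaller \emph{triangle-free} witnesses as contradicting the minimality of $G$, which is legitimate only if ``minimum order'' is read as minimum over all triangle-free graphs with $\alpha=\gamma^\infty<\theta$ rather than over maximal ones --- but the latter, stronger reading is what the statement says and what the subsequent computation (Observation \ref{Observation:MTFalphaTheta} via the search in Table \ref{Table:AlphaThetaMaximal}) actually uses, since deleting a vertex from, or taking an induced subgraph of, a maximal triangle-free graph destroys maximality. In other words, the step you could not fill is not filled in the paper either; you have located precisely the point at which ``Same proof as Proposition \ref{Proposition:TriangleFree}'' is too quick. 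To finish along these lines one would need either to prove that some maximal triangle-free completion of $F$ retains an independent set of size $\lfloor |V(F)|/2\rfloor$, or to restate the proposition (and rerun the reduction used to justify the computer search) with minimality taken over all triangle-free graphs with $\alpha=\gamma^\infty<\theta$.
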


	\begin{proof}
		Same proof as Proposition \ref{Proposition:TriangleFree}.
	\end{proof}
	
	\begin{obs} \label{Observation:MTFalphaTheta}
		There are no maximal triangle-free graphs on $n \leq 16$ vertices with $\alpha=\gamma^\infty<\theta$.
	\end{obs}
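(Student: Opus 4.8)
The plan is to reduce the statement to a single finite computation by combining Proposition \ref{Proposition:MaximalTriangleFree} with Observation \ref{Observation:AlphaTheta}. Suppose to the contrary that some maximal triangle-free graph on at most $16$ vertices satisfies $\alpha=\gamma^\infty<\theta$, and let $G$ be one of minimum order; then $|V(G)|\le 16$. By Proposition \ref{Proposition:MaximalTriangleFree}, if $X$ is a maximum independent set of $G$ and $Y=V(G)-X$, then $|X|=|Y|-1$, so $n:=|V(G)|$ is odd and $\alpha(G)=\lfloor n/2\rfloor$. On the other hand, a maximal triangle-free graph is in particular triangle-free, so Observation \ref{Observation:AlphaTheta} forces $n\ge 15$. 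The only odd integer with $15\le n\le 16$ is $n=15$, so the entire claim reduces to showing that no maximal triangle-free graph on $15$ vertices satisfies $\alpha=\gamma^\infty<\theta$.

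First I would record how cheaply the two inequalities can be decided for $n=15$, where $\alpha(G)=7$. Since $G$ is triangle-free, each clique in a clique covering has at most two vertices, hence $\theta(G)=15-\mu(G)\ge 8$, where $\mu(G)$ is the matching number; thus $\theta(G)>7=\alpha(G)$ holds automatically and the inequality $\alpha<\theta$ requires no further work. By Fact \ref{Fact:Bounds} we always have $\gamma^\infty\ge\alpha$, so a graph with $\alpha=7$ is a counterexample precisely when $\gamma^\infty\le 7$. Deciding this is exactly the task performed by Algorithm $1$ with parameter $k=7$: the graph is a counterexample if and only if the pruned arc-coloured digraph on the $7$-element dominating sets is non-null.

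The computation I would run is then, for $n=15$: generate all maximal triangle-free graphs on $15$ vertices with NAUTY; compute $\alpha$ (as the clique number of the complement) and discard every graph with $\alpha\ne 7$; and for each surviving graph execute Algorithm $1$ with $k=7$. The assertion to be verified is that in every case the digraph is reduced to the null graph, so that $\gamma^\infty>7$ for all candidates. This yields no maximal triangle-free graph of order $15$ with $\alpha=\gamma^\infty<\theta$, contradicting the existence of $G$ and establishing Observation \ref{Observation:MTFalphaTheta}.

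The hard part is the scale of this single computation. There are very many maximal triangle-free graphs on $15$ vertices with $\alpha=7$, and for each one Algorithm $1$ must build and then repeatedly prune a digraph whose vertices are the $7$-element dominating sets, of which there can be as many as ${15 \choose 7}=6435$; this exponential step, iterated over the whole candidate list, is precisely the bottleneck identified in Section \ref{Section:Algorithms} as the slowest class of graphs treated in the paper. The feasibility of the $n=15$ case rests on rejecting non-candidates early using the inexpensive parameters $\alpha$ and $\mu$, and on an efficient fixed-point implementation of the deletion step of Algorithm $1$; the smaller odd orders need no attention here, as they are already subsumed by Observation \ref{Observation:AlphaTheta}.
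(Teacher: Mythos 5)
Your proposal is correct and follows essentially the same route as the paper: Proposition \ref{Proposition:MaximalTriangleFree} restricts a minimum-order counterexample to odd order with $\alpha=\lfloor n/2\rfloor$, and the claim is then discharged by a finite search with Algorithm $1$ over the surviving maximal triangle-free graphs (the paper reports exactly this search in Table \ref{Table:AlphaThetaMaximal}, with $0$ counterexamples for each odd $n\le 15$). The refinements you add are logical rather than computational: by citing Observation \ref{Observation:AlphaTheta} and the fact that maximal triangle-free graphs are triangle-free, you collapse the search to the single order $n=15$, whereas the paper re-runs the search over all odd orders $5,7,\ldots,15$; and by filtering only on $\alpha=7$ and letting Fact \ref{Fact:Bounds} together with $\theta=n-\mu\ge 8$ handle the inequality $\alpha<\theta$, you avoid having to justify the paper's additional filter $\theta=\lceil n/2\rceil$ for the minimal counterexample. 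Both reductions are sound and terminate in the same computation; yours is simply a slightly more economical and more explicitly argued version of it.
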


	Observation \ref{Observation:MTFalphaTheta} was verified using a similar computer search as in Observation \ref{Observation:AlphaTheta}.
		
	\begin{table}[h!]
		\small
		\begin{center}
			\caption{Number of maximal triangle-free graphs on $n$ vertices with $\alpha=\lfloor \frac{n}{2} \rfloor$ and $\theta=\lceil \frac{n}{2} \rceil$.}
			\label{Table:AlphaThetaMaximal}
			\begin{tabularx}{1\textwidth} { 
					| >{\centering\arraybackslash}X 
					| >{\centering\arraybackslash}X 
					| >{\centering\arraybackslash}X
					| >{\centering\arraybackslash}X
					| >{\centering\arraybackslash}X | }
				\hline
				$n$ & Total & $\alpha=\lfloor \frac{n}{2} \rfloor$ & $\alpha= \lfloor \frac{n}{2} \rfloor$ \& $\theta= \lceil \frac{n}{2} \rceil$ & $\alpha=\gamma^\infty=\lfloor \frac{n}{2} \rfloor$ \& $\theta= \lceil \frac{n}{2} \rceil$ \\
				\hline
				5 & 3 & 1 & 1 & 0 \\
				7 & 6 & 1 & 1 & 0 \\
				9 & 16 & 5 & 5 & 0 \\
				11 & 61 & 23 & 23 & 0 \\
				13 & 392 & 172 & 172 & 0 \\
				15 & 5036 & 1837 & 1837 & 0 \\
				17 & 164796 & 38606 & 38606 & ? \\
				\hline
			\end{tabularx}
		\end{center}
	\end{table}
	
	
	\subsubsection{Circulant graphs} \label{Section:Circulant}
	
	Circulant graphs are another interesting family of graphs on which we study the problem. Using Algorithm $1$, we found all the circulant graphs of order $20$ or less with $\gamma^\infty<\theta$ (see Table \ref{Table:Circulants}).
	
	\begin{table}[h!]
		\small
		\begin{center}
			\caption{List of small circulant graphs with $\gamma^\infty<\theta$.}
			\label{Table:Circulants}
			\begin{tabular}{|p{0.5cm} || p{13cm}|}
				\hline
				$n$ & List of graphs \\
				\hline
				\hline
				13 & $C_{13}[1, 3, 4]$, $C_{13}[1,2,3,5]$. \\
				\hline
				14 & None \\
				\hline
				15 & $C_{15}[1, 3, 4]$. \\
				\hline
				16 & $C_{16}[1, 2, 4, 5]$, $C_{16}[1, 2, 3, 4, 6]$. \\
				\hline
				17 & $C_{17}[1, 2, 4, 8], C_{17}[1, 2, 3, 5, 6], C_{17}[1, 2, 3, 5, 8]$. \\
				\hline
				18 & $C_{18}[1, 3, 8], C_{18}[1, 2, 4, 5, 6], C_{18}[1, 2, 4, 5, 6, 9]$. \\
				\hline
				19 & $C_{19}[1, 4, 6], C_{19}[1, 3, 5, 6], C_{19}[1, 2, 3, 4, 5, 7], C_{19}[1, 2, 3, 5, 7, 8]$. \\
				\hline
				20 & $C_{20}[1, 5, 8], C_{20}[2, 5, 6], C_{20}[1, 6, 8, 9], C_{20}[1, 2, 4, 5, 6], C_{20}[1, 2, 4, 5, 7]$
				
				$C_{20}[1, 2, 5, 7, 8], C_{20}[1, 2, 3, 4, 5, 7, 8], C_{20}[1, 2, 3, 4, 6, 7, 10], C_{20}[1, 3, 4, 7, 8, 9, 10]$. \\
				\hline
			\end{tabular}
		\end{center}
	\end{table}
	
	\newpage
	\begin{prop} \label{Proposition:BowtieCirculant}
		For any integer $n$, $C_{n}[k_1, k_2, ..., k_l] \bowtie K_2 \cong C_{2n}[2k_1, 2k_2, \ldots, 2k_l, 2k_1+1, 2k_2+1, \ldots, 2k_l+1]$.
	\end{prop}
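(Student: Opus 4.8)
The plan is to realise $G \bowtie K_2$ explicitly as a circulant on $\mathbb{Z}_{2n}$ and then read off its connection set. First I would unpack the bow tie product when the second factor is $K_2$. Writing $V(K_2)=\{0,1\}$, the two defining clauses collapse: since the unique edge of $K_2$ joins its two vertices, the condition ``$v_j=v_j'$ or $v_j v_j'\in E(K_2)$'' holds for every choice of second coordinates. Hence $(v_i,a)$ and $(v_j,b)$ are adjacent in $G\bowtie K_2$ if and only if $v_iv_j\in E(G)$, for all $a,b\in\{0,1\}$. In other words $G\bowtie K_2$ is two disjoint copies of $G$ (the ``layers'' $a=0$ and $a=1$) together with the cross edges joining $(v_i,0)$ to $(v_j,1)$ exactly when $v_iv_j\in E(G)$.

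Next I would fix a bijection $\phi:V(G)\times\{0,1\}\to\mathbb{Z}_{2n}$ and check that adjacency corresponds to the difference lying in the prescribed set $S=\{\pm 2k_p,\ \pm(2k_p+1):1\le p\le l\}$. The natural first candidate is $\phi(v_i,a)=2i+a$, under which even residues index one layer and odd residues the other, so the verification splits by parity. For two vertices in the same layer the difference is $2(i-j)$, and since $v_iv_j\in E(G)$ means $i-j\equiv\pm k_p\pmod n$, this difference ranges over $\pm 2k_p\pmod{2n}$, exactly the even elements of $S$. For vertices in opposite layers the difference is odd, of the form $2(i-j)\pm 1$, and the goal is to show it ranges over $\pm(2k_p+1)\pmod{2n}$. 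I would also record the commuting automorphisms $\rho:(v_i,a)\mapsto(v_{i+1},a)$ and $\tau:(v_i,a)\mapsto(v_i,1-a)$, which provide the vertex-transitivity a circulant requires.

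The step I expect to be the main obstacle is the cross-layer case, since the naive labelling does not immediately present the odd differences symmetrically: handling $a=0,b=1$ and $a=1,b=0$ separately produces differences of the form $2k_p+1$ alongside $-(2k_p-1)$, so one must confirm that, as the base vertex and orientation vary, these assemble into the single symmetric set $\{\pm(2k_p+1)\}$ rather than spuriously introducing the value $2k_p-1$. The clean way to force this symmetry is to label the vertices along one orbit of the cyclic automorphism $\mu=\rho\tau$, $(v_i,a)\mapsto(v_{i+1},1-a)$, after which the circulant structure is manifest and the connection set is determined by which powers of $\mu$ carry $(v_0,0)$ to a neighbour (for odd $n$ the element $\mu$ has order $2n$ and a single orbit covers $V(G\bowtie K_2)$; for even $n$ I would instead read the connection set directly from the parity computation above). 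Finally I would reconcile the set obtained from the orbit labelling with the stated form $\{2k_p,2k_p+1\}$, applying a multiplier automorphism $x\mapsto mx$ with $\gcd(m,2n)=1$ if the two presentations differ, and I would sanity-check the identity against the small circulants of Table \ref{Table:Circulants}, taking care with the boundary values where $2k_l+1$ meets $\lfloor 2n/2\rfloor=n$.
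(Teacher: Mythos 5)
Your reduction of $G\bowtie K_2$ to the blow-up of $G$ by non-adjacent pairs is correct, your parity analysis of the labelling $2i+a$ is correct, and the obstacle you flagged in the cross-layer case is not a technicality you failed to dispatch --- it is fatal, because Proposition~\ref{Proposition:BowtieCirculant} as stated is false. Completing your orbit labelling (or, uniformly in $n$, the labelling $(v_i,a)\mapsto i+an$) shows that two labels are adjacent exactly when their difference is $\equiv\pm k_p\pmod n$, and the residues modulo $2n$ with this property form the symmetric set $\pm\{k_p,\,n-k_p\}$; hence the true identity is
\[
C_n[k_1,\ldots,k_l]\bowtie K_2\;\cong\;C_{2n}[k_1,\ldots,k_l,\,n-k_1,\ldots,n-k_l].
\]
The last step of your plan --- converting this connection set into $\{2k_p,2k_p+1\}$ by a multiplier --- cannot be carried out, and no other isomorphism can take its place. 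Already for $C_3[1]$: the product is the octahedron $K_{2,2,2}\cong C_6[1,2]$ (this is the paper's own Figure~\ref{Figure:C3K2}), which is $4$-regular, while $C_6[2,3]$ is the $3$-regular triangular prism. Nor is the failure confined to the boundary case $2k_l+1=n$ that you flagged: for $C_{13}[1,3,4]$ (from Table~\ref{Table:Circulants}), the product contains non-adjacent twins --- $(v_i,0)$ and $(v_i,1)$ have identical neighbourhoods --- whereas $C_{26}[2,3,6,7,8,9]$ has no two vertices with equal neighbourhoods: in a circulant, $N(x)=N(y)$ forces $\pm S$ to be invariant under translation by $x-y$, so $\pm S$ would be a union of cosets of a nontrivial subgroup of $\mathbb{Z}_{26}$; since $|\pm S|=12$, the only candidate is the subgroup $\{0,13\}$, and $2\in\pm S$ while $2+13=15\notin\pm S$. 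So the two graphs are not isomorphic, and the proposition fails.

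The comparison with the paper is instructive: its proof uses exactly the naive labelling $u_{2i}=(v_i,0)$, $u_{2i+1}=(v_i,1)$, and its third bullet asserts the equivalence you declined to assert, namely that $u_{2i+1}\sim u_{2j}$ if and only if $(2i+1)-2j\equiv\pm(2k_p+1)\pmod{2n}$. That is precisely where it breaks: when $i-j\equiv -k_p\pmod n$ the difference is $1-2k_p$, the ``spurious'' value $-(2k_p-1)$ you predicted, which in general is not $\equiv\pm(2k_q+1)$ for any $q$; under that labelling the difference set is not even symmetric, so it is not a circulant labelling at all. Your instincts were sounder than the source here. Note finally that your corrected identity is all that is needed downstream: the corollary following the proposition uses only the fact that $G\bowtie K_2$ is again a circulant, of order $2n$ (together with Fact~\ref{Fact:BowtieTriangleFree} and Corollary~\ref{Corollary:TriangleFreeInfinite}), so the existence of infinitely many circulant graphs with $\gamma^\infty<\theta$ survives once the proposition is restated with connection set $\{k_1,\ldots,k_l,\,n-k_1,\ldots,n-k_l\}$.
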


	\begin{proof}
		Let $G = C_{n}[k_1, k_2, ..., k_l]$ be a circulant graph and let $H = C_{n}[k_1, k_2, ..., k_l] \bowtie K_2$. Suppose $V(G)=\{v_1, v_2, \ldots, v_n\}$ and $V(K_2)=\{0, 1\}$. For each $i \in \{1,2,\ldots,n\}$, let $u_{2i}=(v_i,0)$ and $u_{2i+1}=(v_i,1)$. The definition of the bow tie product implies the following statements:
		\begin{itemize}
			\item The vertices $u_{2i}$ and $u_{2j}$ are adjacent in $H$ if and only if the vertices $v_i$ and $v_j$ are adjacent in $G$; that is, if and only if $i-j \equiv \pm k \pmod n$ for some $k \in \{k_1, k_2, \ldots, k_p\}$. Equivalently, $u_{2i}$ is adjacent to $u_{2j}$ if and only if $2i-2j \equiv \pm k \pmod {2n}$ for some $k \in \{2k_1, 2k_2, ..., 2k_l\}$.
			\item The vertices $u_{2i+1}$ and $u_{2j+1}$ are adjacent in $H$ if and only if the vertices $v_i$ and $v_j$ are adjacent in $G$. Therefore, $u_{2i+1}$ is adjacent to $u_{2j+1}$ if and only if $2i-2j \equiv \pm k \pmod {2n}$ for some $k \in \{2k_1, 2k_2, ..., 2k_l\}$.
			\item The vertices $u_{2i+1}$ and $u_{2j}$ are adjacent in $H$ if and only if the vertices $v_i$ and $v_j$ are adjacent in $G$; that is, if and only if $i-j \equiv \pm k \pmod n$ for some $k \in \{k_1, k_2, \ldots, k_p\}$. Equivalently, $u_{2i+1}$ is adjacent to $u_{2j}$ if and only if $(2i+1)-2j \equiv \pm k \pmod {2n}$ for some $k \in \{2k_1+1, 2k_2+1, ..., 2k_l+1\}$.
		\end{itemize}
	Thus, for each $i, j \in \{0,1,2,\ldots,2n-1\}$, the vertices $u_i$ and $u_j$ are adjacent if and only if $i-j \equiv \pm k \pmod {2n}$ for some $k \in \{2k_1, 2k_2, \ldots, 2k_l, 2k_1+1, 2k_2+1, \ldots, 2k_l+1\}$.
	\end{proof}
	
	\begin{cor}
		There exist infinitely many circulant graphs with $\gamma^\infty<\theta$.
	\end{cor}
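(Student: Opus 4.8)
The plan is to exhibit an explicit infinite family by iterating the bow tie product with $K_2$, starting from a single triangle-free circulant graph that is already known to satisfy $\gamma^\infty < \theta$. Proposition \ref{Proposition:BowtieCirculant} guarantees that the bow tie product of a circulant graph with $K_2$ is again circulant, so every member of such a family is automatically circulant; the real work is to ensure that the inequality $\gamma^\infty < \theta$ is inherited at each step.

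For the base of the family I would take $G_0 = C_{18}[1,3,8]$, which appears in Table \ref{Table:Circulants} and was shown by Goddard, Hedetniemi and Hedetniemi to be a triangle-free circulant graph with $\gamma^\infty < \theta$. Because $C_{18}[1,3,8]$ is triangle-free on an even number of vertices and contains the spanning cycle $C_{18}$ (hence a perfect matching), its clique covering number is exactly $\theta(G_0) = 9 = \lceil 18/2 \rceil$. Thus $G_0$ satisfies the hypotheses $\gamma^\infty(G_0) < \theta(G_0) = \lceil |V(G_0)|/2 \rceil$ of Corollary \ref{Corollary:TriangleFreeInfinite}.

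I would then set $G_{i+1} = G_i \bowtie K_2$ and prove by induction that each $G_i$ is a triangle-free circulant graph possessing a perfect matching, with $\gamma^\infty(G_i) < \theta(G_i) = |V(G_i)|/2$. Fact \ref{Fact:BowtieTriangleFree} keeps the graphs triangle-free, and Proposition \ref{Proposition:BowtieCirculant} keeps them circulant. Since $G_i \bowtie K_2$ is obtained from $G_i$ by replacing each vertex with an independent pair and each edge with a $K_{2,2}$, a perfect matching of $G_i$ lifts to a perfect matching of $G_{i+1}$ (match the two copies of each matched pair crosswise); hence $G_{i+1}$ has a perfect matching and, being triangle-free on $|V(G_{i+1})|$ vertices, satisfies $\theta(G_{i+1}) = |V(G_{i+1})|/2 = \lceil |V(G_{i+1})|/2 \rceil$. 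Applying Corollary \ref{Corollary:TriangleFreeInfinite} to $G_i$ then yields $\gamma^\infty(G_{i+1}) < \theta(G_{i+1})$, closing the induction. As $|V(G_i)| = 18 \cdot 2^i$, the graphs $G_0, G_1, G_2, \ldots$ have pairwise distinct orders, so this is an infinite family of circulant graphs with $\gamma^\infty < \theta$.

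The delicate point — and the one I expect to require the most care — is maintaining the exact equality $\theta = \lceil n/2 \rceil$ throughout the iteration, rather than merely the inequality $\theta \geq \lceil n/2 \rceil$ available for every triangle-free graph. Corollary \ref{Corollary:TriangleFreeInfinite} can be reapplied only when this equality holds, and it is precisely the preservation of a perfect matching under the bow tie product that secures it. Everything else is bookkeeping assembled from Proposition \ref{Proposition:BowtieCirculant}, Fact \ref{Fact:BowtieTriangleFree}, and Corollary \ref{Corollary:TriangleFreeInfinite}.
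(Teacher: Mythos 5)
Your proposal is correct and follows essentially the same route as the paper, whose proof cites exactly the three results you assemble: Fact \ref{Fact:BowtieTriangleFree}, Corollary \ref{Corollary:TriangleFreeInfinite} and Proposition \ref{Proposition:BowtieCirculant}, applied iteratively to a triangle-free circulant base graph such as $C_{18}[1,3,8]$. Your explicit verification that $\theta = \lceil n/2 \rceil$ is preserved at each step (via lifting a perfect matching through the bow tie product) is a detail the paper leaves implicit, and it is a genuine and welcome strengthening of the write-up rather than a departure from its method.
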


	\begin{proof}
		This follows from Fact \ref{Fact:BowtieTriangleFree}, Corollary \ref{Corollary:TriangleFreeInfinite} and Proposition \ref{Proposition:BowtieCirculant}.
	\end{proof}


	\subsubsection{Planar graphs} \label{Section:Planar}
	
	Anderson, Barrientos, Brigham, Carrington, Vitray and Yellen \cite{anderson2007maximum} showed that any outerplanar graph has eternal domination number equal to its clique covering number. Moreover, no planar graph where $\gamma^\infty<\theta$ is known. This suggest that it might be true for general planar graphs and motivates the following question of Klostermeyer and Mynhardt.
		
	\begin{que} [\cite{klostermeyer2016protecting}] \label{Question:Planar}
		Is it true that $\gamma^\infty(G)=\theta(G)$ if $G$ is planar?
	\end{que}

	We first describe some properties of a smallest planar graph with $\gamma^\infty<\theta$, if it exists.
	
	\begin{prop}
		Let $\mathcal{F}$ be a family of graphs satisfying a hereditary property $\mathcal{P}$; i.e. if $G \in \mathcal{F}$ and $G$ satisfies property $\mathcal{P}$, then $H$ satisfies property $\mathcal{P}$ for any subgraph $H$ of $G$. Suppose $\mathcal{F}$ contains graphs $G$ such that $\gamma^\infty(G)<\theta(G)$. Then, the smallest such graph $G$ is a $2$-connected vertex critical graph.
	\end{prop}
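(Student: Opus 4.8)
The plan is to show that a smallest member $G$ of $\mathcal{F}$ with $\gamma^\infty(G)<\theta(G)$ cannot be improved either by deleting a single vertex or by splitting it at a cut vertex. Throughout I use that any subgraph of a member of $\mathcal{F}$ again satisfies $\mathcal{P}$ and so lies in $\mathcal{F}$, together with the monotonicity of $\gamma^\infty$ under induced subgraphs (Fact \ref{Fact:Spanning}(a)), the bounds $\alpha\le\gamma^\infty\le\theta$ (Fact \ref{Fact:Bounds}), and the additivity of both $\gamma^\infty$ and $\theta$ over components.

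First I would dispose of the two easy reductions. For \emph{vertex criticality}, recall that $\theta(G)-1\le\theta(G-v)\le\theta(G)$ for every $v$ (a minimum clique cover of $G$ restricts to one of $G-v$, and a minimum clique cover of $G-v$ extends to one of $G$ by adding the singleton clique $\{v\}$). If some $v$ had $\theta(G-v)=\theta(G)$, then by Fact \ref{Fact:Spanning}(a) we would get $\gamma^\infty(G-v)\le\gamma^\infty(G)<\theta(G)=\theta(G-v)$, so the smaller graph $G-v\in\mathcal{F}$ would also satisfy $\gamma^\infty<\theta$, contradicting minimality; hence $\theta(G-v)=\theta(G)-1$ for all $v$, i.e.\ $G$ is vertex critical. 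For \emph{connectedness}, if $G$ were disconnected then $\gamma^\infty$ and $\theta$ would add over its components while $\gamma^\infty(C)\le\theta(C)$ holds for each component $C$; a strict inequality in the totals then forces some component to satisfy $\gamma^\infty(C)<\theta(C)$, again a smaller counterexample.

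Now suppose, for contradiction, that the connected vertex-critical graph $G$ has a cut vertex $v$. Let $A_1,\dots,A_m$ ($m\ge2$) be the components of $G-v$ and set $B_i=G[A_i\cup\{v\}]$. I would first pin down the clique-cover arithmetic: since $\theta(G-v)=\sum_i\theta(A_i)$ and $G$ is vertex critical, $\theta(G)=\sum_i\theta(A_i)+1$; moreover if some branch had $\theta(B_j)=\theta(A_j)$ we could combine a minimum cover of $B_j$ with minimum covers of the remaining $A_i$ to cover all of $G$ with only $\theta(G-v)=\theta(G)-1$ cliques, a contradiction, so $\theta(B_i)=\theta(A_i)+1$ for every $i$. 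Each $A_i$ and each $B_i$ is a proper subgraph of $G$ lying in $\mathcal{F}$, so by minimality $\gamma^\infty(A_i)=\theta(A_i)$ and $\gamma^\infty(B_i)=\theta(B_i)=\gamma^\infty(A_i)+1$: in every branch, restoring the cut vertex costs exactly one extra guard. At this point Fact \ref{Fact:Spanning}(a) already yields the free bound $\gamma^\infty(G)\ge\gamma^\infty(G-v)=\sum_i\gamma^\infty(A_i)=\theta(G)-1$, so it remains only to gain a single further guard, i.e.\ to prove $\gamma^\infty(G)\ge\theta(G)$, which will contradict $\gamma^\infty(G)<\theta(G)$ and finish the proof.

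The heart of the argument is therefore the following \emph{vertex-sum} lower bound: if $G$ is the union of two subgraphs $X$ and $Y$ whose only common vertex is $v$ (with no edges between $X-v$ and $Y-v$), then $\gamma^\infty(G)\ge\gamma^\infty(X)+\gamma^\infty(Y)-1$. Granting this, I would apply it with $X=B_1$ and $Y=G-A_1$ (the union of the other branches glued at $v$) and iterate: using $\gamma^\infty(B_i)=\gamma^\infty(A_i)+1$ at each step the $-1$ is absorbed, and the estimates telescope to $\gamma^\infty(G)\ge\sum_i\gamma^\infty(A_i)+1=\theta(G)$, as required. To prove the vertex-sum bound I would argue from the attacker's side, interleaving an optimal attack sequence on $X$ with one on $Y$ and exploiting that the only channel between the two sides is $v$, which carries at most one guard. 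The delicate point, and the step I expect to be the main obstacle, is controlling \emph{reinforcement} across the cut: a guard may pass from $Y$ into $X$ (or back) only by occupying $v$, and while it is in transit the side it left is one guard short. Because the defender must keep a dominating set on all of $G$ at every step, neither side can be abandoned to feed the other, so the attacker can time its threats on the depleted side to force the defender to commit $\gamma^\infty(X)$ guards to $X$ and $\gamma^\infty(Y)$ guards to $Y$ simultaneously, with only the guard on $v$ shared between the two counts---which is exactly the $-1$. Making this timing argument rigorous, in particular formalising the sense in which the attacker can ``force'' a side to its full eternal-domination requirement while the cut vertex is being contested, is where the real work lies.
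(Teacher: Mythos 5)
Your preliminary reductions are correct and coincide with the paper's: vertex criticality via Fact \ref{Fact:Spanning}, the reduction to connected graphs, the clique-cover arithmetic giving $\theta(B_i)=\theta(A_i)+1$ for every branch, and the minimality consequences $\gamma^\infty(A_i)=\theta(A_i)$ and $\gamma^\infty(B_i)=\theta(B_i)=\gamma^\infty(A_i)+1$. But the proof as submitted is incomplete, and the missing piece is not a detail: your vertex-sum inequality $\gamma^\infty(X\cup_v Y)\ge\gamma^\infty(X)+\gamma^\infty(Y)-1$ is never proved, and it carries the entire weight of the proposition. Moreover, the sketch you give for it does not close. If the attacker never attacks $v$, then no guard can ever cross between the two sides (a guard moves only to an attacked vertex), so the game does decouple; but then the $X$-side guards only have to answer attacks on $V(X)\setminus\{v\}$ and keep $V(X)\setminus\{v\}$ dominated, and this restricted task can genuinely be done with fewer than $\gamma^\infty(X)$ guards: for $X=P_3$ with $v$ an endpoint, one guard suffices although $\gamma^\infty(P_3)=2$. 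So confining attacks to one side does \emph{not} force the defender to commit $\gamma^\infty(X)$ guards there. To gain the missing guard the attacker must attack $v$ itself, and every attack on $v$ is exactly an opportunity for the defender to answer from the other side, i.e., to rotate the one-guard deficiency back and forth through the cut vertex and reset the attacker's progress. Already in $P_5=P_3\cup_v P_3$ one sees that attacks confined to either side lose for the attacker, and the win requires coordinated attacks on both sides together with $v$; ruling out the rotating defence in general is the real theorem, and your proposal names this difficulty (``where the real work lies'') without overcoming it.

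It is worth comparing this with how the paper reaches the contradiction, because the routes differ in form. The paper proves no standalone two-summand inequality. From $\gamma^\infty(G)\ge\sum_i\gamma^\infty(G_i)$, $\theta(G)\le\sum_i\theta(G_i)+1$ and $\gamma^\infty(G)<\theta(G)$ it extracts the equalities $\gamma^\infty(G)=\sum_i\gamma^\infty(G_i)$ and $\theta(G)=\sum_i\theta(G_i)+1$, and then concludes that some single branch satisfies $\gamma^\infty(G_i+\{v\})=\gamma^\infty(G_i)$ and $\theta(G_i+\{v\})=\theta(G_i)+1$, so that $G_i+\{v\}$ is a smaller member of $\mathcal{F}$ with $\gamma^\infty<\theta$, contradicting minimality of $G$. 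Observe that this existence claim is precisely the contrapositive of the special case of your lemma that you actually use: by your own minimality argument every branch satisfies $\gamma^\infty(B_i)=\gamma^\infty(A_i)+1$, and the paper's claim says this is incompatible with $\gamma^\infty(G)=\sum_i\gamma^\infty(A_i)$. So you have correctly isolated the crux of the argument --- the very step the paper dispatches in the single sentence ``This implies that there exists $i$\dots'' --- but isolating it is not proving it. As it stands, your proposal establishes everything except that step, and therefore does not constitute a proof.
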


	\begin{proof}
		It is clear that $G$ is a vertex-critical graph; otherwise, by Fact \ref{Fact:Spanning}, $G$ contains a proper induced subgraph which is also in $\mathcal{F}$ with $\gamma^\infty<\theta$ (contradiction). Suppose $G$ is not $2$-connected and let $v$ be a cut vertex of $G$, that is a vertex such that $G-\{v\}$ has $k$ components $\{G_1, G_2, \ldots, G_k\}$ where $k\geq2$. By the minimality of $G$, $\gamma^\infty(G_i)=\theta(G_i)$ for each $i \in \{1,2,\ldots,k\}$. Since $\gamma^\infty(G) \geq \sum_{i=1}^k \gamma^\infty(G_i)$, $\theta(G) \leq \sum_{i=1}^k \theta(G_i)+1$ and $\gamma^\infty(G)<\theta(G)$, we conclude that $\gamma^\infty(G) = \sum_{i=1}^k \gamma^\infty(G_i)$ and $\theta(G) = \sum_{i=1}^k \theta(G_i)+1$. This implies that there exists $i \in \{1,2,\ldots,k\}$ such that $\gamma^\infty(G_i)=\gamma^\infty(G_i+\{v\})$ and $\theta(G_i+\{v\})=\theta(G_i)+1$. In this case, $G_i+\{v\}$ is a smaller graph in $\mathcal{F}$ with $\gamma^\infty<\theta$, which is a contradiction.
	\end{proof}
	
	Since planarity is a hereditary property, the smallest planar graph with $\gamma^\infty<\theta$, if it exists, is a $2$-connected vertex-critical graph. Observe that none of the graphs on $11$ vertices or less satisfying $\gamma^\infty<\theta$ we found (Table \ref{Appendix:GammaInftyTheta}) are planar, hence we have the following observation.

	\begin{obs} \label{Observation:Planar}
		There are no planar graphs on $11$ vertices or less with $\gamma^\infty<\theta$.
	\end{obs}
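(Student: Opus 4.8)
The plan is to reduce the claim to a finite check against the complete list of small graphs with $\gamma^\infty < \theta$ that has already been assembled. By Propositions \ref{Proposition:SmallestGraph} and \ref{Proposition:GammaInftyEqualTheta} and the extended search described in Section \ref{Section:GeneralGraphs}, the graphs on at most $11$ vertices satisfying $\gamma^\infty < \theta$ are exactly those recorded (in Graph6 format) in Table \ref{Appendix:GammaInftyTheta}. Any planar graph on at most $11$ vertices with $\gamma^\infty < \theta$ is in particular a graph on at most $11$ vertices with $\gamma^\infty < \theta$, so it must occur in that list. Thus it suffices to verify that none of the graphs in Table \ref{Appendix:GammaInftyTheta} is planar, which I would do by applying a standard planarity-testing routine to each graph.

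To make the densest cases checkable by hand, and thereby independent of a computer, I would use the elementary fact that a simple planar graph on $n \ge 3$ vertices has at most $3n-6$ edges. The two $10$-vertex graphs $G_1$ and $G_2$ of Figure \ref{Figure:SmallestCounterExample} have $26$ and $27$ edges respectively, both exceeding $3 \cdot 10 - 6 = 24$, so neither is planar; similarly the complement of the Gr\"otzsch graph has ${11 \choose 2} - 20 = 35$ edges, which exceeds $3 \cdot 11 - 6 = 27$, so it too is non-planar. For any graph remaining in the list whose edge count does not already violate this bound, non-planarity can be certified by exhibiting a subdivision of $K_5$ or $K_{3,3}$, as guaranteed by Kuratowski's theorem.

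Combining these verifications gives the Observation: since planarity is hereditary, a planar graph on at most $11$ vertices with $\gamma^\infty < \theta$ would appear in Table \ref{Appendix:GammaInftyTheta}, and every graph there has been shown to be non-planar. The main obstacle here is not analytic but rests on the earlier enumeration: the argument is exactly as reliable as the completeness of Table \ref{Appendix:GammaInftyTheta}, so the real work lies in the guarantee, from the search of Section \ref{Section:GeneralGraphs}, that no graph on at most $11$ vertices with $\gamma^\infty < \theta$ was missed. Granting that, the planarity step is routine.
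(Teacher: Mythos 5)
Your proposal is correct and takes essentially the same route as the paper: the paper also obtains Observation \ref{Observation:Planar} by noting that the computed list in Table \ref{Appendix:GammaInftyTheta} contains every graph on at most $11$ vertices with $\gamma^\infty<\theta$ and that none of those graphs is planar, so the result is exactly as reliable as the completeness of that enumeration. Your hand-checkable certificates (the $3n-6$ edge bound for $G_1$, $G_2$ and the Gr\"otzsch complement, and Kuratowski subdivisions for any sparser entries) are a harmless elaboration of the planarity check rather than a different argument.
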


	We also considered planar graphs of higher orders ($12$ and $13$) in our search and we used plantri \cite{brinkmann2007fast, brinkmann2007plantri} (version $5.2$) to generate them. Due to the limitations of plantri, we only considered $3$-connected planar graphs and obtained the following observation.
	
	\begin{obs} \label{Observation:3ConnectedPlanar}
		There are no $3$-connected planar graphs on $13$ vertices or less with $\gamma^\infty<\theta$.
	\end{obs}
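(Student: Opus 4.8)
The statement is a finite computational claim, so my plan is to verify it exhaustively rather than analytically. By Observation~\ref{Observation:Planar}, every planar graph---and in particular every $3$-connected planar graph---on at most $11$ vertices already satisfies $\gamma^\infty=\theta$, so the only genuinely new cases are the orders $n=12$ and $n=13$. The first step is therefore to generate, using plantri, the complete list of $3$-connected planar graphs on $12$ and on $13$ vertices, and then to certify $\gamma^\infty(G)=\theta(G)$ for every graph $G$ in these two lists.

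The key to making the search tractable is Fact~\ref{Fact:Bounds}, which sandwiches $\alpha(G)\le\gamma^\infty(G)\le\theta(G)$. For each generated graph I would first compute $\alpha(G)$ (as $\omega(\overline{G})$) and $\theta(G)$ (by the maximal-clique plus integer-programming method of Section~\ref{Section:Algorithms}). Whenever $\alpha(G)=\theta(G)$ the sandwich immediately forces $\gamma^\infty(G)=\theta(G)$, and the graph is discarded with no further work. Only the graphs with $\alpha(G)<\theta(G)$ require the exact value of $\gamma^\infty(G)$, which is then computed with Algorithm~$1$; confirming $\gamma^\infty(G)=\theta(G)$ for each of these (equivalently, that none attains $\gamma^\infty<\theta$) completes the verification. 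I would note that the criticality reduction that pruned the general-graph search leading to Table~\ref{Table:Data} is unavailable here, since deleting a vertex from a $3$-connected graph need not leave a $3$-connected graph; the $\alpha<\theta$ filter is essentially the only pruning at our disposal within the $3$-connected class.

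The main obstacle is computational cost on two fronts. First, the number of $3$-connected planar graphs grows rapidly with $n$, so the order-$13$ list is already very large; second, Algorithm~$1$ is exponential, since the auxiliary arc-coloured digraph $D$ has one vertex for every $k$-vertex dominating set of $G$, and this digraph must be built and repeatedly pruned for every graph surviving the filter. The filtering step is precisely what keeps the overall computation feasible: the expensive eternal-domination routine is invoked only on the comparatively few graphs for which the cheap parameters $\alpha$ and $\theta$ fail to decide the question.

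Finally, I would flag the one genuine limitation of this approach. Because plantri is efficient only for generating $3$-connected planar graphs, the conclusion is necessarily restricted to the $3$-connected case and does not settle Question~\ref{Question:Planar} for all planar graphs on $12$ and $13$ vertices; extending the verification to $2$-connected (but not $3$-connected) planar graphs of these orders would require a different generator and a substantially larger computation.
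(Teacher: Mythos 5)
Your proposal is correct and would establish the observation: plantri generation at orders $12$ and $13$, the $\alpha<\theta$ filter from Fact \ref{Fact:Bounds}, Algorithm $1$ on the survivors, and the appeal to Observation \ref{Observation:Planar} for orders up to $11$ is exactly the paper's pipeline. The genuine difference is the pruning step you reject. The paper runs Algorithm $1$ only on the \emph{vertex-critical} graphs with $\alpha<\theta$ ($878$ graphs at order $12$ and $2475$ at order $13$, versus your $745440$ and $6774391$; see Table \ref{Table:3ConnectedPlanar}), and it justifies this not inside the $3$-connected class but via its proposition on hereditary families: planarity (not $3$-connected planarity) is hereditary, so a \emph{minimum-order} planar graph with $\gamma^\infty<\theta$ must be $2$-connected and vertex-critical; since orders up to $11$ are clear, any $12$-vertex $3$-connected counterexample would be such a minimum-order graph. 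Hence your assertion that the criticality reduction is ``unavailable'' because vertex deletion destroys $3$-connectivity is too strong --- the paper's argument never needs the vertex-deleted subgraph to remain $3$-connected. Where your caution does have bite is at order $13$: the paper's pruning is airtight there only if no $12$-vertex planar counterexample of \emph{any} connectivity exists, yet only $3$-connected graphs were generated at order $12$; a non-vertex-critical $13$-vertex $3$-connected counterexample would merely force, after deleting a suitable vertex, a $12$-vertex planar counterexample that is guaranteed $2$-connected but possibly not $3$-connected, which that search could not have seen. Your unpruned computation is immune to this subtlety; its price is invoking the exponential Algorithm $1$ roughly three orders of magnitude more often --- still feasible on $13$-vertex graphs, where the dominating-set digraph is small, but this is precisely where the paper's key lemma buys its large saving.
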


	\begin{table}[h!]
		\small
		\begin{center}
			\caption{Number of $3$-connected planar graphs on $n$ vertices.}
			\label{Table:3ConnectedPlanar}
			\begin{tabularx}{1\textwidth} { 
					| >{\centering\arraybackslash}X 
					| >{\centering\arraybackslash}X 
					| >{\centering\arraybackslash}X
					| >{\centering\arraybackslash}X
					| >{\centering\arraybackslash}X | }
				\hline
				$n$ & Total & $\alpha<\theta$ & Vertex-Critical \& $\alpha<\theta$ & Vertex-Critical \& $\gamma^\infty<\theta$ \\
				\hline
				\hline
				10 & 32300 & 2773 & 14 & 0 \\
				11 & 440564 & 25771 & 74 & 0 \\
				12 & 6384634 & 745440 & 878 & 0 \\
				13 & 96262938 & 6774391 & 2475 & 0 \\
				\hline
			\end{tabularx}
		\end{center}
	\end{table}


	\subsubsection{Claw-free graphs} \label{Section:ClawFree}
	
	A \emph{claw-free graph} is a graph that does not contain a claw as an induced subgraph. In particular, any graph with independence number $2$ is claw-free. The complements of the Mycielski graphs $M_k$ described in Section \ref{Section:PreliminaryResults} have independence number $2$, eternal domination number $3$ and clique covering number $k$ for $k \geq 4$. Consequently, there are infinitely many claw-free graphs with $\gamma^\infty<\theta$.


	\subsubsection{Cubic graphs} \label{Section:Cubic}
	
	A \emph{cubic graph} is a graph in which all vertices have degree $3$. Using NAUTY along with Algorithm $1$, we generated the set of cubic graphs on fewer than $18$ vertices and obtained the following observation.
	
	\begin{obs}
		There are no cubic graph on $n \leq 16$ vertices with $\gamma^\infty<\theta$.
	\end{obs}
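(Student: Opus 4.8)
The plan is to verify the observation by an exhaustive but carefully pruned computation, structured so that the expensive eternal-domination routine (Algorithm $1$) is invoked as rarely as possible. First I would observe that every cubic graph has even order: summing degrees gives $3n = 2|E(G)|$, so $n$ must be even, and it suffices to treat $n \in \{4,6,8,10,12,14,16\}$. Next, since $\gamma^\infty$ is additive over connected components (as noted at the start of Section \ref{Section:PreliminaryResults}) and $\theta$ is too (no clique meets two components), any disconnected cubic graph with $\gamma^\infty < \theta$ would contain a connected cubic component with the same property on fewer vertices; hence I may restrict to \emph{connected} cubic graphs, which I would generate for each admissible order with NAUTY.

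The decisive reduction is supplied by Fact \ref{Fact:Bounds}. Because $\alpha(G) \leq \gamma^\infty(G) \leq \theta(G)$, any cubic graph with $\alpha(G) = \theta(G)$ automatically satisfies $\gamma^\infty(G) = \theta(G)$ and may be set aside without running Algorithm $1$. So for each generated graph I would compute $\alpha$ (as the clique number of the complement) and $\theta$ (as a minimum clique cover), and retain only those with $\alpha < \theta$. Both quantities are cheap here: in a cubic graph every clique has at most four vertices, and for a triangle-free cubic graph $\theta = n - \mu$, where $\mu$ is the matching number, so the bipartite cubic graphs, being perfect (Corollary \ref{Corollary:Perfect}), are immediately discarded with $\alpha = \theta$.

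For each surviving graph it remains to decide whether $\gamma^\infty = \theta$. As $\gamma^\infty \leq \theta$ is already guaranteed, I only need to exclude $\gamma^\infty \leq \theta - 1$: I would run Algorithm $1$ a single time with $k = \theta - 1$ and confirm that the resulting digraph is null, which certifies $\gamma^\infty > \theta - 1$ and hence $\gamma^\infty = \theta$. Performing this check on every retained graph of every admissible order establishes the observation.

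The main obstacle is the cost of Algorithm $1$, whose input digraph has one vertex for each $(\theta-1)$-element dominating set of $G$. For a triangle-free cubic graph $\theta$ is close to $n/2$, so for $n = 16$ this digraph can have on the order of ${16 \choose 7}$ vertices, each carrying out-arcs and subject to the iterated pruning of Step $2$. The tractability of the whole verification therefore hinges on the $\alpha = \theta$ filter eliminating almost all of the several thousand connected cubic graphs of order at most $16$, so that the exponential computation is performed only on the small residue that survives, and only for the single parameter value $k = \theta - 1$.
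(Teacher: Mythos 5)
Your proposal is correct and takes essentially the same approach as the paper: the authors likewise generate all connected cubic graphs on at most $16$ vertices with NAUTY, discard those with $\alpha=\theta$ (which forces $\gamma^\infty=\theta$ by Fact \ref{Fact:Bounds}), and run Algorithm $1$ only on the survivors with $\alpha<\theta$ --- exactly the pipeline reflected in the columns of Table \ref{Table:Cubic}. The extra touches you add (parity of $n$, additivity over components, a single run of Algorithm $1$ at $k=\theta-1$) are harmless refinements of that same computation rather than a different route.
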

	
	\begin{table}[h!]
		\small
		\begin{center}
			\caption{Number of connected cubic graphs on $n$ vertices.}
			\label{Table:Cubic}
			\begin{tabularx}{0.8\textwidth} { 
					| >{\centering\arraybackslash}X 
					| >{\centering\arraybackslash}X 
					| >{\centering\arraybackslash}X
					| >{\centering\arraybackslash}X | }
				\hline
				$n$ & Total & $\alpha<\theta$ & $\gamma^\infty<\theta$ \\
				\hline
				\hline
				4 & 1 & 0 & 0 \\
				6 & 2 & 0 & 0 \\
				8 & 5 & 2 & 0 \\
				10 & 19 & 9 & 0 \\
				12 & 85 & 46 & 0 \\
				14 & 509 & 320 & 0 \\
				16 & 4060 & 2888 & 0 \\
				\hline
			\end{tabularx}
		\end{center}
	\end{table}
	
	
	\subsection{Domination, eternal domination and clique covering} \label{Section:DominationEternalDomination}
	
	As we have seen above, the guards must always be located on the vertices of a dominating set in a graph $G$ in order to defend a sequence of attacks on $G$. With this in mind, many researchers showed interest in characterizing graphs for which the domination number is equal to the eternal domination number. Klostermeyer and Mynhardt \cite{klostermeyer2020eternal} posed the following question.
	
	\begin{que} [\cite{klostermeyer2020eternal}] \label{Question:GammaSetsGeneral}
		Let $G$ be a graph with $\gamma(G)=\gamma^\infty(G)$. Is any minimum dominating set of $G$ an eternal dominating set of $G$?
	\end{que}
	
	The answer to Question \ref{Question:GammaSetsGeneral} is no. We state this in the following proposition.
	
	\begin{prop}
		For any integer $k \geq 2$, there exists a graph $G$ such that $\gamma(G)=\gamma^\infty(G)=k$ having minimum dominating sets which are not eternal dominating sets.	
	\end{prop}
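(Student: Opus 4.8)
The plan is to isolate the whole difficulty in a single small \emph{seed} graph and then amplify it by disjoint unions with complete graphs. Call $H$ a seed if $\gamma(H)=\gamma^\infty(H)$ and $H$ has a minimum dominating set that is not an eternal dominating set. Granting a seed with $\gamma(H)=\gamma^\infty(H)=k_0$, for each $k\ge k_0$ I would set
\[
G \;=\; H \,\sqcup\, K_{m_1}\,\sqcup\,\cdots\,\sqcup\,K_{m_{k-k_0}},\qquad m_1,\dots,m_{k-k_0}\ge 1 .
\]
Since $\gamma$ and $\gamma^\infty$ are both additive over connected components (additivity of $\gamma^\infty$ is recorded at the start of Section~\ref{Section:PreliminaryResults}) and $\gamma(K_m)=\gamma^\infty(K_m)=1$, this gives $\gamma(G)=\gamma^\infty(G)=k$. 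A dominating set of a disjoint union is eternal if and only if its restriction to each component is eternal, because guards never move between components and the attacker may confine all of its attacks to one component. Hence the set consisting of the non-eternal minimum dominating set of $H$ together with one vertex of each $K_{m_i}$ is a minimum dominating set of $G$ that is not eternal, and letting the $m_i$ vary over the positive integers yields infinitely many pairwise non-isomorphic examples for each fixed $k$.

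It then remains only to produce a seed, and the natural place to look is among the graphs with $\gamma^\infty<\theta$ constructed in Section~\ref{Section:GeneralGraphs}. The reason is that if $H$ admitted a clique cover into $\theta(H)=\gamma^\infty(H)=\gamma(H)$ cliques, then a minimum dominating set that is a transversal of such a cover is defended forever simply by playing the game independently in each clique (the strategy behind $\gamma^\infty\le\theta$ in Fact~\ref{Fact:Bounds}); so there is essentially no room for a non-eternal minimum dominating set when $\theta=\gamma^\infty$. I would therefore aim for a seed with
\[
\gamma(H)=\gamma^\infty(H)=3<\theta(H)=4 ,
\]
the smallest conceivable size in view of Proposition~\ref{Proposition:SmallestGraph}. (A seed with $\gamma=\gamma^\infty=2$ cannot exist: by Fact~\ref{Fact:Bounds} it has $\alpha\le2$, and a short argument using the fact that $\gamma^\infty=\theta$ whenever $\theta\le3$ forces $\theta=2$, after which the clique-cover strategy defends every minimum dominating set. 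So the amplification supplies all $k\ge 3$, and the smallest value of $k$ is confirmed directly by the computation of Section~\ref{Section:Algorithms}.)

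With such an $H$ in hand, a minimum dominating set carries only $3$ guards but must meet $\theta(H)=4$ cliques, so it necessarily leaves some clique of a minimum clique cover unoccupied. The trap to engineer is then the following: the attacker attacks a vertex whose only guarded neighbour is a guard that is the \emph{sole} occupant of some clique, forcing that guard out of its clique and leaving a vertex of that clique undominated. One verifies that a particular minimum dominating set admits such a forcing sequence by tracking the finitely many guard configurations over a bounded number of moves, exactly as in the proof of Proposition~\ref{Proposition:SmallestGraph}. Simultaneously one exhibits a \emph{different} minimum dominating set from which the three guards can answer every attack; this certifies $\gamma^\infty(H)\le 3$, and with $\gamma^\infty(H)\ge\gamma(H)=3$ (every eternal dominating set is in particular a dominating set) we conclude $\gamma^\infty(H)=3=\gamma(H)$.

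The main obstacle is precisely the construction and certification of the seed. One must arrange a single graph in which three guards suffice to defend some optimal configuration — an \emph{upper} bound $\gamma^\infty\le\gamma$ that here cannot be read off a clique cover, since $\theta>\gamma^\infty$, and so demands an explicit eternal strategy or an application of Algorithm~$1$ — while a \emph{different} configuration of the same size is beaten by the attacker. Reconciling these opposing demands under the extra constraint $\gamma=\gamma^\infty$ is delicate: for instance the graphs $G_1,G_2$ of Section~\ref{Section:GeneralGraphs} already realise $\gamma^\infty<\theta$ but fail here because $\gamma=2<\gamma^\infty=3$, so one cannot simply reuse them. This balance is exactly what the large-scale search of Section~\ref{Section:Algorithms} is well suited to settle; once a seed is certified, the disjoint-union amplification completes the proof and answers Question~\ref{Question:GammaSetsGeneral} in the negative.
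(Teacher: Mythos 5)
Your amplification step (disjoint unions with cliques) is sound, but the heart of your proposal rests on a false premise, and that premise derails the whole construction. You claim that if $\gamma(H)=\gamma^\infty(H)=\theta(H)$ then ``there is essentially no room for a non-eternal minimum dominating set,'' because a minimum dominating set that is a transversal of a minimum clique cover can be defended clique-by-clique. But nothing forces a minimum dominating set to be a transversal of a clique cover: it may place two guards in one clique of the cover and none in another, and then the clique-cover strategy says nothing about it. The paper's own seed is exactly such a graph: the $5$-vertex ``house'' (a $5$-cycle $u_0u_1u_2u_3u_4$ plus the chord $u_1u_4$, Figure \ref{Figure:GammaInfinitySets}) has $\gamma=\gamma^\infty=\theta=2$ (clique cover $\{u_0,u_1,u_4\}$, $\{u_2,u_3\}$), yet the minimum dominating set $\{u_1,u_4\}$ is not eternal: after an attack on $u_0$, whichever guard answers leaves $u_2$ or $u_3$ undominated. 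Consequently your parenthetical claim that a seed with $\gamma=\gamma^\infty=2$ cannot exist is false --- it contradicts the very statement being proved, which asserts examples for all $k\ge 2$ including $k=2$, while your construction could at best cover $k\ge 3$.

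The error is not confined to the case $k=2$; it sends you hunting for a seed that likely does not exist. A graph $H$ with $\gamma(H)=\gamma^\infty(H)<\theta(H)$ is precisely a counterexample to the $\gamma$-$\theta$ Conjecture (Conjecture \ref{Conjecture:GammaTheta}), which is open and which the paper verifies for all graphs of order at most $11$: among the $56$ graphs of order $\le 11$ with $\gamma^\infty<\theta$, none satisfies $\gamma=\gamma^\infty$ (Table \ref{Table:GammaTheta} and the discussion following it). So the computational search you propose to lean on is guaranteed to return nothing in the range where it has been run, and producing your seed by any means would settle an open conjecture in the negative. The correct move is the opposite of your intuition: take a seed with $\gamma=\gamma^\infty=\theta$ whose bad minimum dominating set is simply misaligned with every minimum clique cover --- the house graph --- and then grow it for larger $k$, as the paper does by attaching a caterpillar of pendant pairs (which also keeps the examples connected; your clique-union amplification would work as well once the correct seed is used, but only yields disconnected graphs).
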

	
	\begin{proof}
	Consider the graph $G_1$ in Figure \ref{Figure:GammaInfinitySets}. It can be easily checked that $\gamma(G_1)=\gamma^\infty(G_1)=\theta(G_1)=2$. Moreover, the set $\{u_1, u_4\}$ is a dominating set of $G$; however, if the guards are located on the vertices $u_1$ and $u_4$, then any response to an attack on the vertex $u_0$ leaves either the vertex $u_2$ or the vertex $u_3$ undominated on the next turn. Figure \ref{Figure:GeneralGammaInfinitySets} shows a generalisation of the graph in Figure \ref{Figure:GammaInfinitySets} where $\gamma(G_2)=\gamma^\infty(G_2)=k$. The set $\{u_1, u_4\} \cup \{v_0, v_1, \ldots, v_{k-3}\}$ is a dominating set of size $k$, but any response to an attack on vertex $u_0$ leaves one of the vertices $u_2, u_3$ or $w_0$ undominated on the next turn.
	\end{proof}
	
	\begin{figure}[h!]
		\centering
		\begin{subfigure}[b]{0.45\textwidth}
			\centering
			\begin{tikzpicture}
			\begin{pgfonlayer}{nodelayer}
				\node [style=vertex] (0) at (0, 3) {$u_0$};
				\node [style=vertex] (1) at (-1, 2) {$u_1$};
				\node [style=vertex] (2) at (-1, 0) {$u_2$};
				\node [style=vertex] (3) at (1, 0) {$u_3$};
				\node [style=vertex] (4) at (1, 2) {$u_4$};
			\end{pgfonlayer}
			\begin{pgfonlayer}{edgelayer}
				\draw (1) to (2);
				\draw (4) to (3);
				\draw (0) to (1);
				\draw (3) to (2);
				\draw (1) to (4);
				\draw (0) to (4);
			\end{pgfonlayer}
		\end{tikzpicture}
												
		\caption{A graph $G_1$ with $\gamma(G_1)=\gamma^\infty(G_1)=2$ having more minimum dominating sets than eternal dominating sets.}
		\label{Figure:GammaInfinitySets}	
		\end{subfigure}
		\hfill
		\begin{subfigure}[b]{0.45\textwidth}
			\centering
\begin{tikzpicture}
	\begin{pgfonlayer}{nodelayer}
		\node [style=vertex] (0) at (-2.5, 3) {$u_0$};
		\node [style=vertex] (1) at (-3.5, 2) {$u_1$};
		\node [style=vertex] (2) at (-3.5, 0) {$u_2$};
		\node [style=vertex] (3) at (-1.5, 0) {$u_3$};
		\node [style=vertex] (4) at (-1.5, 2) {$u_4$};
		\node [style=vertex] (5) at (-0.5, 3) {$v_0$};
		\node [style=vertex] (6) at (-0.5, 2) {$w_0$};
		\node [style=vertex] (7) at (0.5, 3) {$v_1$};
		\node [style=vertex] (8) at (0.5, 2) {$w_1$};
		\node [style=vertex] (9) at (1.5, 3) {$v_2$};
		\node [style=vertex] (10) at (1.5, 2) {$w_2$};
		\node [style=vertex] (11) at (3.5, 3) {\tiny $v_{k-3}$};
		\node [style=vertex] (12) at (3.5, 2) {\tiny $w_{k-3}$};
		\node [style=none] (13) at (2, 3) {};
		\node [style=none] (14) at (3, 3) {};
		\node [style=none] (15) at (2, 2) {};
		\node [style=none] (16) at (3, 2) {};
		\node [style=none] (17) at (2.5, 2) {$...$};
		\node [style=none] (18) at (2.5, 3) {$...$};
	\end{pgfonlayer}
	\begin{pgfonlayer}{edgelayer}
		\draw (1) to (2);
		\draw (4) to (3);
		\draw (0) to (1);
		\draw (3) to (2);
		\draw (1) to (4);
		\draw (0) to (4);
		\draw (0) to (5);
		\draw (5) to (6);
		\draw (5) to (7);
		\draw (7) to (8);
		\draw (7) to (9);
		\draw (9) to (10);
		\draw (9) to (13.center);
		\draw (11) to (14.center);
		\draw (11) to (12);
	\end{pgfonlayer}
\end{tikzpicture}
								
		\caption{A graph $G_2$ with $\gamma(G_2)=\gamma^\infty(G_2)=k$ having more minimum dominating sets than eternal dominating sets.}
		\label{Figure:GeneralGammaInfinitySets}
		\end{subfigure}
		
		\caption{Two graphs having more minimum dominating sets than eternal dominating sets.}
		\label{Figure:GammaSetsGammaInfinitySets}
	\end{figure}
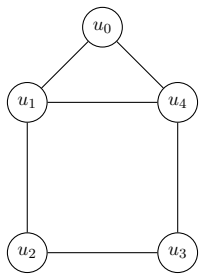
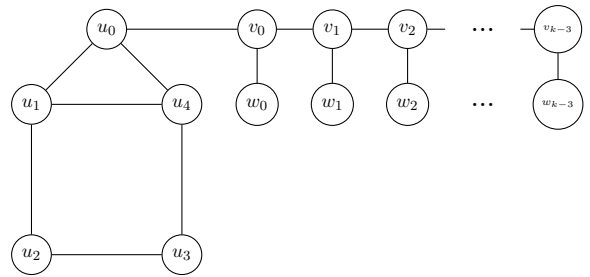
	
	A question of Klostermeyer and MacGillivray, which was later stated as a conjecture by Klostermeyer and Mynhardt, is the following.
	
	\begin{con}[$\gamma-\theta$ Conjecture \cite{klostermeyer2016protecting, klostermeyer2020eternal}] \label{Conjecture:GammaTheta}
		For any graph $G$, $\gamma(G)=\gamma^\infty(G) \iff \gamma(G)=\theta(G)$.
	\end{con}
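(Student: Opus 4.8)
The equivalence splits into one trivial implication and one hard one, and the plan is to isolate exactly where the difficulty lies. First I would record the refined chain $\gamma(G) \le \alpha(G) \le \gamma^\infty(G) \le \theta(G)$: the two outer inequalities are Fact~\ref{Fact:Bounds}, while $\gamma(G) \le \alpha(G)$ holds because a maximum independent set is maximal, hence dominating. The reverse implication of the conjecture is then immediate, since $\gamma(G) = \theta(G)$ collapses the whole chain and in particular forces $\gamma(G) = \gamma^\infty(G)$.

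For the forward implication the same chain does the first piece of work: if $\gamma(G) = \gamma^\infty(G)$, then $\gamma$, $\alpha$ and $\gamma^\infty$ are squeezed to a common value, so the hypothesis secretly carries the extra identity $\gamma(G) = \alpha(G)$. Consequently the conjecture is equivalent to the implication $\gamma(G) = \alpha(G) = \gamma^\infty(G) \Rightarrow \theta(G) = \alpha(G)$; that is, to the statement that no graph satisfies $\gamma = \alpha = \gamma^\infty < \theta$. This reformulation locates the content: the bare condition $\alpha = \gamma^\infty < \theta$ is attainable (the graph $G_1$ of Figure~\ref{Figure:SmallestCounterExample} has $\gamma(G_1) = 2 < \alpha(G_1) = 3 = \gamma^\infty(G_1) < \theta(G_1) = 4$), so any proof must genuinely use the extra identity $\gamma = \alpha$, which fails for $G_1$. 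The reduced statement is also a close relative of Question~\ref{Conjecture:AlphaTheta}, now with $\gamma = \alpha$ imposed in place of triangle-freeness.

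I would then attack the reduced statement through a counterexample $G$ of minimum order with $\gamma(G) = \alpha(G) = \gamma^\infty(G) = k < \theta(G)$. Because $\gamma = \alpha$, there is a set $S$ that is simultaneously a maximum independent set and a minimum dominating set, and by the forcing device used in the proof of Proposition~\ref{Proposition:SmallestGraph} (the attacker successively attacks all of $S$) the guards may be assumed to start on $S$, with every reachable configuration a minimum dominating set of size $k$. From the surviving family of configurations produced by Algorithm~$1$, the goal is to manufacture a clique cover of size $k$: since every clique meets $S$ in at most one vertex, any clique cover has at least $\alpha = k$ cliques, so a cover of size $k$ would be a partition into exactly $k$ cliques and would contradict $\theta(G) > k$. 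A parallel line of attack is to mimic the reductions used elsewhere in the paper and show that a minimum counterexample must be $2$-connected and vertex critical with respect to $\theta$, then exploit the rigid independent-dominating structure forced by $\gamma = \alpha$.

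The hard part will be the clique-cover extraction. In an eternal strategy the guards are not confined to fixed cliques --- this roaming is precisely what allows $\gamma^\infty < \theta$ in general --- so the vertices occupied by a single guard over the course of play need not induce a clique, and there is no canonical way to tag a transition with ``which guard moved''. The entire burden of the argument is to show that the hypothesis $\gamma = \alpha$ forbids this roaming and pins each guard to a clique. A secondary obstacle is that, unlike $\theta$, neither $\gamma$ nor $\alpha$ is monotone under vertex deletion or edge addition (Fact~\ref{Fact:Spanning} controls only $\gamma^\infty$), so the criticality machinery that drives the order-$\le 9$ and planar arguments does not transfer cleanly; a genuinely new invariant seems to be required, which is presumably why the conjecture is here only verified computationally through order $11$.
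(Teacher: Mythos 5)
The statement you set out to prove is Conjecture~\ref{Conjecture:GammaTheta}: it is an \emph{open conjecture}, and the paper does not prove it. The paper's only contribution is computational --- the observation following the conjecture, supported by Table~\ref{Table:GammaTheta}, that no counterexample exists on $n \le 11$ vertices. Your proposal is likewise not a proof, and you concede this in your final paragraph. What you do establish is correct but is precisely the known, easy part: the chain $\gamma \le \alpha \le \gamma^\infty \le \theta$ (with $\gamma \le \alpha$ because a maximum independent set is maximal, hence dominating) gives the backward implication immediately, and shows that the forward implication is equivalent to the nonexistence of graphs with $\gamma = \alpha = \gamma^\infty < \theta$. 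This reformulation is sound (and your sanity check on $G_1$ is consistent with the paper: Table~\ref{Table:GammaTheta} reports that none of the $56$ graphs with $\gamma^\infty < \theta$ on at most $11$ vertices satisfy $\gamma = \gamma^\infty$, so indeed $\gamma(G_1) = 2 < 3 = \alpha(G_1) = \gamma^\infty(G_1)$). But the entire mathematical content of the conjecture lies in the step you defer: showing that $\gamma = \alpha = \gamma^\infty = k$ forces a clique cover of size $k$. Your proposal offers no mechanism for this --- no argument that pins each guard to a clique, and no invariant that survives the passage to a minimum counterexample --- so the gap is not a fixable detail; it is the whole theorem.

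Two smaller corrections to your closing remarks. First, $\alpha$ \emph{is} monotone (non-increasing) under both vertex deletion and edge addition, and $\gamma$ is non-increasing under edge addition; the genuine obstruction to the criticality machinery of Section~\ref{Section:MainResults} is only that $\gamma$ can increase when a vertex is deleted, so a minimum counterexample to this conjecture need not inherit $\gamma = \alpha = \gamma^\infty$ on its vertex-deleted subgraphs. Second, be careful with the claim that the attacker ``may force the guards onto $S$'': attacking the vertices of a maximum independent set one by one forces a guard onto each attacked vertex at the moment of the attack, but it does not force the final configuration to equal $S$, since guards may subsequently vacate earlier attacked vertices; the paper's own use of this device in Proposition~\ref{Proposition:SmallestGraph} is only to normalize the \emph{initial} configuration up to the attacker's choice, not to confine all reachable configurations to minimum dominating sets, as your plan would require.
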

	
	Our computer search yields the following observation.
	
	\begin{obs}
		There are no counterexample to the $\gamma-\theta$ conjecture of order $n \leq 11$.
	\end{obs}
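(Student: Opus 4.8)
The plan is to reduce the biconditional in Conjecture \ref{Conjecture:GammaTheta} to a single finite check carried out over the graphs already enumerated in Section \ref{Section:GeneralGraphs}. The starting observation is that the three parameters are always nested as $\gamma(G) \leq \gamma^\infty(G) \leq \theta(G)$: the upper inequality is Fact \ref{Fact:Bounds}, and the lower one holds because, as noted in Section \ref{Section:PreliminaryResults}, the guards must occupy a dominating set at every stage of the game, so at least $\gamma(G)$ guards are required.

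Given this chain, one implication of the conjecture comes for free. If $\gamma(G) = \theta(G)$, then the sandwich $\gamma(G) \leq \gamma^\infty(G) \leq \theta(G) = \gamma(G)$ forces $\gamma^\infty(G) = \gamma(G)$; hence the implication $\gamma = \theta \Rightarrow \gamma = \gamma^\infty$ holds for \emph{every} graph, with no restriction on the order. Consequently a counterexample of any order must witness the other implication failing, that is, it must satisfy $\gamma(G) = \gamma^\infty(G)$ together with $\gamma(G) \neq \theta(G)$. Since $\gamma^\infty(G) \leq \theta(G)$, the latter is equivalent to $\gamma^\infty(G) < \theta(G)$, so a counterexample is precisely a graph with
\[
    \gamma(G) = \gamma^\infty(G) < \theta(G).
\]

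I would then restrict attention to $n \leq 11$ and split into two ranges. For $n \leq 9$, Proposition \ref{Proposition:GammaInftyEqualTheta} gives $\gamma^\infty(G) = \theta(G)$ for every graph, so the displayed condition $\gamma^\infty < \theta$ is never met and there is nothing to check. For $n \in \{10, 11\}$, the complete list of graphs with $\gamma^\infty < \theta$ was already produced in Section \ref{Section:GeneralGraphs} and recorded in Table \ref{Appendix:GammaInftyTheta}. The remaining step is to compute $\gamma(G)$ for each graph on that list, using the $k$-subset domination test described in Section \ref{Section:Algorithms}, and to confirm that $\gamma(G) < \gamma^\infty(G)$ in every case. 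This rules out the only possible form of a counterexample and establishes the observation.

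The content of the argument is therefore not analytic but resides entirely in the completeness and correctness of the earlier enumeration: the main point to be careful about is that Table \ref{Appendix:GammaInftyTheta} really does list \emph{every} graph of order $10$ or $11$ with $\gamma^\infty < \theta$, so that the domination-number check cannot overlook a potential counterexample. Granting that enumeration, the verification reduces to a short and routine finite computation.
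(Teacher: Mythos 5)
Your proposal is correct and takes essentially the same route as the paper: the paper likewise treats the implication $\gamma=\theta \Rightarrow \gamma=\gamma^\infty$ as automatic from $\gamma \le \gamma^\infty \le \theta$ and verifies the other direction computationally, explicitly noting that among the $56$ graphs of order at most $11$ with $\gamma^\infty<\theta$ (Table \ref{Appendix:GammaInftyTheta}) none satisfies $\gamma=\gamma^\infty$. Your refinement of invoking Proposition \ref{Proposition:GammaInftyEqualTheta} for $n\le 9$ and then checking only the listed graphs for $n\in\{10,11\}$ is precisely the finite check the paper's computation carries out.
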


	\begin{table}[h!]
		\small
		\begin{center}
			\caption{Number of connected graphs on $n$ vertices with $\gamma=\alpha$, $\gamma=\gamma^\infty$ and $\gamma=\theta$.}
			\label{Table:GammaTheta}
			\begin{tabularx}{0.8\textwidth} { 
					| >{\centering\arraybackslash}X 
					| >{\centering\arraybackslash}X 
					| >{\centering\arraybackslash}X
					| >{\centering\arraybackslash}X
					| >{\centering\arraybackslash}X | }
				\hline
				$n$ & Total & $\gamma=\alpha$ & $\gamma=\gamma^\infty$ & $\gamma=\gamma^\infty=\theta$ \\
				\hline
				\hline
				5 & 21 & 6 & 5 & 5 \\
				6 & 112 & 24 & 22 & 22 \\
				7 & 853 & 88 & 67 & 67 \\
				8 & 11117 & 524 & 358 & 358 \\
				9 & 261080 & 4515 & 2265 & 2265 \\
				10 & 11716571 & 73515 & 23394 & 23394 \\
				11 & 1006700565 & 2324209 & 396755 & 396755 \\
				\hline
			\end{tabularx}
		\end{center}
	\end{table}

	The computation in Table \ref{Table:GammaTheta} was performed on a cluster which required approximately $85$ CPU days. We found $56$ graphs (listed in Table \ref{Appendix:GammaInftyTheta} in the appendix) with $\gamma^\infty<\theta$ among which there are $55$ graphs with $\alpha=\gamma^\infty<\theta$ and none with $\gamma=\gamma^\infty<\theta$. 

	
	\newpage
	\section{Acknowledgements}
	All the computations in this paper were run on a PowerEdge R$7425$ server equipped with two AMD EPYC processors and totalling $64$ threaded CPU cores. This machine was purchased using NSERC funding by Wendy Myrvold. We thank Wendy for her help with the computations. \newline
	
	\noindent We acknowledge the support of the Natural Sciences and Engineering Research Council of Canada (NSERC), PIN $04459, 253271$. \newline
	
	\noindent Cette recherche a \'{e}t\'{e} financ\'{e}e par le Conseil de recherches en sciences naturelles et en g\'{e}nie du Canada (CRSNG), PIN $04459, 253271$.

	\begin{center}
		\includegraphics[width=2.5cm]{NSERC_BLACK.jpg}
	\end{center}

	
	\newpage

	
	\newpage
	\section*{Appendix}
	
	All the graphs in this appendix are listed in Graph6 format.
	
	\begin{table}[h!]
		\small
		\begin{center}
			\caption{List of critical graphs with $\alpha<\theta$.}
			\label{Appendix:Critical}
			\begin{tabular}{|p{0.5cm} | p{14.5cm}|}
				\hline
				$n$ & List of graphs \\
				\hline
				\hline
				$5$ & 
				\verb|DUW| \\
				\hline
				$7$ & 
				\verb|FCptO	FCxv?	FUzro| \\
				\hline
				$8$ & 
				\verb|GCrbuW	GCpveg	GCxvcw	GEhuSw| \\
				\hline
				$9$ & 
				\verb|H?bBVbQ	H?bBTjQ		H?bBThY		H?bBTiU		H?bDJqU		H?bF`xw		H?bDjpw		H?bfBqU		H?bbVaU		H?bbV_]		H?bvbro		H?rF`zo		H?q`qjo		H?o~fbo		HCQ`faw		HCQeJaL		HCrfRym		HCrbvW}		HCrUrze		HCpvfim		HCpvexy		HCpvVW}		HCpvUzq		HCpvUzU		HCpvRym		HCpunrM		HCpunp]		HCrJvi]		HCzbvZe		HCxvfri		HCxvfpy		HCxvez[		HCxvezM		HCvdjrM		HEjfaxu		HEhuTxm		HEhvTy{		HUzvvx}| \\
				\hline
			\end{tabular}
		\end{center}
	\end{table}

	\begin{table}[h!]
		\small
		\begin{center}
			\caption{List of connected graphs with $\gamma^\infty<\theta$.}
			\label{Appendix:GammaInftyTheta}
			\begin{tabular}{|p{0.5cm} | p{14.5cm}|}
				\hline
				$n$ & List of graphs \\
				\hline
				\hline
				$10$ & \verb|IEhbtj{ro		IEhbtn{ro| \\
				\hline
				$11$ & 
				\vspace{-18pt}				
				\begin{verbatim}
					JQyurj]yt|?		JEhbtj{rvu?		JEhbtj{rvx?		JEhbtj{rvf?		JEhbtj{ruv?		JEhbtj{rvT_		JEhbtj{rtt_		JEhbtj{rrt_		JEhbtj{rv}?		JEhbtj{rv|?		JEhbtj{rvv?		JEhbtj{rv^?		JEhbtj{rvx_		JEhbtj{rvt_		JEhbtj{rvl_		JEhbtj{rv\_		JEhbtj{rvf_		JEhbtj{rtv_		JEhbtj{rv~?		JEhbtj{rv|_		JEhbtj{rvv_		JEhbtj{rv~_		JEhbtnm~E|?		JEhbtnm~FZ?		JEhbtnm~D]_		JEhbtnm~@}_		JEhbtnN~Fu?		JEhbtnN~Bv?		JEhbtnN~Fw_		JEhbtnN~Fe_		JEhbtnN~Eu_		JEhbtnN~Bu_		JEhbtnN~Ef_		JEhbtnN~F}?		JEhbtnN~Fv?		JEhbtnN~F{_		JEhbtnN~Fu_		JEhbtnN~F]_		JEhbtnN~Ff_		JEhbtnN~Ev_		JEhbtnN~Bv_		JEhbtnN~F~?		JEhbtnN~F}_		JEhbtnN~Fv_		JEhbtnN~F~_		JEhvUtn~D{_		JEhutz{xr\_		JEhru^u~Fj?		JEhru]v~Et_		JEjbtnN~Dm_		JEnfbz\zbv?		JCXetqu|uz?		JCXetq}vVm?		JCXetq}|uz?		
				\end{verbatim} \\
				\hline
			\end{tabular}
		\end{center}
	\end{table}
	
	\begin{table}[h!]
		\small
		\begin{center}
			\caption{List of triangle-free graphs with $\gamma^\infty<\theta$.}
			\label{Appendix:TriangleFree}
			\begin{tabular}{|p{0.5cm} | p{14.5cm}|}
				\hline
				$n$ & List of graphs \\
				\hline
				\hline
				$13$ & 
				\verb|L?`@F?M]DgYOFo L?`DAboU`w@{hS L?`@?boNAsLGBw L?`@?boNAsOwYO L?`@?boNAs@{os| 
				\verb|L?`@Cbod`w@{YS L?BDB?{{AsRGBs L?`@C`wl?{DgQs L?BDB?{Ucq^?Fo L?`@F@wlEcBoBo| 
				\verb|L?`@F@wlEcBoFo L?`@F@wlEcBgFo L?`@F?kQ_}YSlC| \\
				\hline
			\end{tabular}
		\end{center}
	\end{table}


	\newpage
	\begin{table}[h!]
		\small
		\begin{center}
			\caption{List of maximal triangle-free graphs with $\gamma^\infty<\theta$.}
			\label{Appendix:MaximalTriangleFree}
			\begin{tabular}{|p{0.5cm} | p{14.5cm}|}
				\hline
				$n$ & List of graphs \\
				\hline
				\hline
				$13$ & 
				\verb|L?`DAboU`w@{hS	L?BDB?{{AsRGBs	L?BDB?{Ucq^?Fo	L?`@F?kQ_}YSlC	L?`@?boNAsOwYO| \\
				\hline
				$14$ & 
				\verb|M?`@F?kQckBwsglC? M?`@?boNAs@{oshQ?| \\
				\hline
				$15$ & 
				\vspace{-18pt}
				\begin{verbatim}
				N??EFBOK_}DsrC~?^_? N??EFBOK_ZbwJgrC^_? N?AAF@SBo}TSm_{CN_? N?AA@aoqTsVO^?BwGl_
				N?AADB_HdsN_VOE[B{? N?AADB_HeobMm_VOB{? N?AADB_HaqbM^?m_B{? N?ABB?WwGNN_eoFs^@?
				N?ABAaQFbgTG}?JWPX? N??CBBOk@\HqfO^_@}? N?`@F?kQcKBwsglCN@? N??ED?qw@{JgRW~?By?
				N??ED?qwAaHkjA~?By? N??ED?qwAaHkJa~?By? N??ED?qwAaPdRW~?By? N??ED?qwAyJgRW~?By?
				N??ED?qwEXHkN_~?By? N??ED?qwEXHkVOFs^_? N??ED?qwAZHkN_~?By? N??CB@OeOmCuiI~?No?
				N?AAD@WUDD_}N_VQ^?_ N??CFBOBw}Ds^_~?No? N??EDBo{@{JgFoB{^_? N?AA@BOHn_N_m_uS@|?
				N??CFBOK_}BwJgrCNo? N??CB@OKCyTcrC^_@}? N??ED@_MdoRC~?FwGj_ N??EDbGl?]EsLg~?HIo
				N?AA@B_@zwVOuOFw^A? N?AAF?e{EpN_m_Fo@y? N?AA@Bo{@{JgeoB{@}? N??ED@_Nfo^?VOfG?|_
				N??ED@_NayRc~?Fw^_? N??ED@_NayRcN_~?B{? N?AA@Boy?^FoVOeoAy? N?AADb_RO^Bs^?m_UQ?
				N?AADb_RTc~?N_BwBs? N?AA@b_HaYPWFguCN_? N??CB?Xs?]cufOxGNo? N?AA@BoZ?^QYm_VOFo?
				N??CABoyBKRHBwDsVg? N?AA@?O{BwVOuOyG@}? N?AA@?O{BwRWigRW@}? N?AA@?O{@kJgZGaw@N?
				N?AA@?O{@kJgigB{XL? N?AA@?O{@{JgeoyG@}? N?AA@?O{@{JgZGaw@}? N?AA@?O{@{XKVOig@}?
				N?AA@?O{?}DsRWaw@}? N??E@b_sEYBsJc~?@}? N?AAD@O{Ai^?N_ii@}? N?AAD@O{AiN_N_B{TT?
				N?AAD@O{AyN_m_ig@}? N??ED?WHiJJIMaxG^_? N?ABA_oebgTGqGBwP\? N?ABA_oedQXC^?BwGf?
				N?AA@b_qQYAu^?m_@{? N?AA@b_qO^Bs^?m_Eq? N?AA@b_qOl`{^?m_^?? N?AA@b_qOl`{^?m_N_?
				N??EDaKIaYMCN_tCET? N?AA@BoJ_^YI^?eoVO? N?AA@?WwGNN_m_uON_? N?AA@?WwGNN_m_uOBy?
				N?AA@?WwGNN_m_uO^@? N?AA@b_RRgBsN_aqUQ? N?AA@b_RSUEoFg}?XG_ N?AA@b_RSUUQ\AFg^?? 
				N??CFBOKeWbwJg^_^_? N??CFBOKeW`mFoJgNo? N??ED?qN_}HkrG~?^_? N??ED?qN_}HkrGVO^_?
				N??ED?qSROQ`FobK^_? N??ED?qSO}HkJc~?^_? N??ED?qSO}HkJcrG^_? N??ED?qSO}HkJcbK^_?
				N??ED@OAw^FoVOjG]a? N?AA@B_{BwVOuOFw@}? N?AA@B_{@{JgeoB{B{? N??EDaKRR`]?N_Br^_?
				N??CB@OyDPRgBwDuNo? N?AAF@Sy?mtSm_FoN_? N?AA@AOwFoRWJgRWBy? N?AA@AOwDsZGZGFs@}?
				N?AA@ao{@kGkqQB{VO_ N?AAD@Wgno^?N_VS?|_ N??CEBoy?^Ay~?NoNo? N??CBBOyBKRHDsJg^_?
				N?AADBOQRwDsm_FoEq? N?AAD?oqN_VOLcVO@|? N?AAD?oqJgVOBwVOPT? N??CFBOQ_}BwJglC^_?
				N??CFBOQ_}DslC^_^_? N??ED?qEqyHkxCN_^_? N??ED?qEo}HkxC~?^_? N??ED?qEo}WeRW~?^_?
				N??ED?qEqZHkN_xC^_? N?AA@B_}BwFoVOeoB{? N?AA@B_}@{FoVOeoB{? N?AA@B_}@{JgeoFwB{?
				N?AA@AO{BwRWJgRW@}? N?AA@AO{DsZGFoZG@}? N?AA@AO{CuFoVOZG@}? N?AA@AO{BHVOVOeq@}?
				N?AA@AO{BHFoVOB{RX? N?AA@BO{FoJgeoJg@}? N?AA@BO{@{JgeoJg@}? N??CFBO{?}DsB{~?No?
				N?`@?boNAs[G`oBwBo_ N?`@?boNAsLG`o`o@{? N?`@?boNAsLG`o`oN?W N?`@?boNAsSgooBo?Fw
				N?AA@B_\BwVOuOA{B{? N?AA@B_\BwVOasuOB{? N?AA@B_\DSPYTQuOB{? N?AA@B_\@[YH}?asB{?
				N?AA@B_\@[YHasVOB{? N?AA@B_\CU~?N_VOB{? N?AA@B_\CUvON_VOB{? N?AA@B_\CUvOuON_B{?
				N?AA@B_\CUvOuOFwB{? N?AA@B_\CUfoVOeoB{? N??ED@o]@sJGBw~?Of_ N?AA@_owBwVOFcqW@}?
				N?AA@BO}FoJgeoJg?~? N?AA@_goW]As}?^?VO? N?AA@_goW]As^?m_N_? N?AA@_goW]As^?m_Fq?
				N?AA@_goW]As^?m_]@? N?AA@_goW]As^?m_YD? N?AA@_goW]As^?m_[@_ N?AA@b@BpLZ@^?m_Fs?
				N?AA@b@Bo\N_m_{EFs? N?AA@b@Bv_n_m_NgFs? N?AA@BOX@\@]VO}?RY? N?AAF@ScfoTSZCFoBw?
				N?AAF@Sc`kBwigZC@N? N??ED?ZDp{[cVOyC^_? N?AA@_o{FoFoRWaw@}? N?AA@_o{@kGkB{qYVO_
				N?AA@_o{AJN_qWB{VP? N??ED?ZFp{[cVO~?^_? N??FEaK[N_BoJO~?@z? N?AA@?WeF@Cueo^?Gm?
				N??CB@OeRcCuFobY^_? N?AADB_\FoVOVOA{B{? N?AADB_\BwVOVOA{B{? N?AA@B_HbwVOVOFwZI?
				N?AA@B_HbwVOFwFwZI? N?AA@B_HbwVOE[FwZI? N?AA@B_HaI^?uSmaB{? N?AA@B_HaIN_FwuSVP?
				N?AA@B_HaIN_EMeoN_? N?AA@B_HaIN_EMeoRW? N?AA@B_HaIN_EMeoB{? N?AA@B_HaIOW}?FwBF_
				N?AA@B_HaIRWFwuSVP? N?AA@B_HaIB{uSmaB{? N?AA@B_HaiTP}?uSB{? N?AA@B_HaiTP^?FwZI?
				N?AA@B_HaiTPm_FwZI? N?AA@B_HayRW}?FwZI? N?AA@B_HayRW^?FwZI? N?AA@B_H_rjgm_FwZI?
				N?AADAo{AJFoVOB{VP? N??ED@_{@{FoVOfG@}? N??ED@_{@{JgfGB{^_? N??ED@_{@{JgfGB{B{?
				N?AA@_o}@{^?RWaw?}_ N??CFBOQdguaFoJg^_? N?AADB_kdsN_RSFwB{? N?AADB_k_NHi^?Fw^@?
				N?ABAaQT?]N_}?iSDM?
				\end{verbatim} \\ 
				\hline
			\end{tabular}
		\end{center}
	\end{table}

\begin{thebibliography}{100}
	
	\bibitem{alon1992probabilistic} \textsc{N. Alon and J. H. Spencer}. The probabilistic method. With an appendix by Paul Erdős. \textit{Wiley-Interscience Series in Discrete Mathematics and Optimization}, John Wiley \& Sons, (1992).
 
	\bibitem{anderson2007maximum} \textsc{M. Anderson, C. Barrientos, R. C. Brigham, J. R. Carrington, R. P. Vitray and J. Yellen}. Maximum-demand graphs for eternal security. \textit{J. Combin. Math. Combin. Comput.} 61 (2007), 111–127.
 
 	\bibitem{brinkmann2007plantri} \textsc{G. Brinkmann. and B. D. McKay}. The program \textbf{plantri}. Available at \url{https://users.cecs.anu.edu.au/~bdm/plantri/}
 	
 	\bibitem{brinkmann2007fast} \textsc{G. Brinkmann and B. D. McKay}. Fast generation of planar graphs. \textit{MATCH Commun. Math. Comput. Chem.} 58 (2007), no. 2, 323–357. 
 	
 	\bibitem{burger2004infinite} \textsc{A. P. Burger, E. J. Cockayne, W. R. Grundlingh, C. M. Mynhardt, J. H. van Vuuren and W. Winterbach}. Infinite order domination in graphs. \textit{J. Combin. Math. Combin. Comput.} 50 (2004), 179–194. 
 	
 	\bibitem{chvatal1974minimality} \textsc{V. Chv{\'a}tal}. The minimality of the Mycielski graph. \textit{Graphs and Combinatorics (Proc. Capital Conf., George Washington Univ., Washington, D.C., 1973), Lecture Notes in Math.} 406 (1974), 243–246, Springer, Berlin.
 	
 	\bibitem{descartes1947three} \textsc{B. Descartes}. A three colour problem. \textit{Eureka} 9 (1947), 21.
 	
 	\bibitem{descartes1954solution} \textsc{B. Descartes}. Solution to advanced problem no. 4526. \textit{Amer. Math. Monthly} 61 (1954), 352.
 	
 	\bibitem{erdHos1976asymptotic} \textsc{P. Erd{\H{o}}s, D. J. Kleitman and B. L. Rothschild}. Asymptotic enumeration of $K_n$-free graphs. \textit{Colloquio Internazionale sulle Teorie Combinatorie (Rome, 1973)}, 2 (1976), 19–27. 
 	
 	\bibitem{goddard2005eternal} \textsc{W. Goddard, S. M. Hedetniemi and S. T. Hedetniemi}. Eternal security in graphs. \textit{J. Combin. Math. Combin. Comput.} 52 (2005), 169–180.
 	
 	\bibitem{grtschel1988geometric} \textsc{M. Grötschel, L. Lovász and A. Schrijver}. Geometric algorithms and combinatorial optimization. Springer-Verlag, (1988).
 	
	\bibitem{klostermeyer2007complexity} \textsc{W. F. Klostermeyer}. Complexity of eternal security. \textit{J. Combin. Math. Combin. Comput.} 61 (2007), 135–140. 	
 	
 	\bibitem{klostermeyer2016dynamic} \textsc{W. F. Klostermeyer, M. Lawrence and G. MacGillivray}. Dynamic dominating sets: the eviction model for external domination. \textit{J. Combin. Math. Combin. Comput.} 97 (2016), 247–269. 
 	
 	\bibitem{klostermeyer2007eternal} \textsc{W. F. Klostermeyer and G. MacGillivray}. Eternal security in graphs of fixed independence number. \textit{J. Combin. Math. Combin. Comput.} 63 (2007), 97–101. 
 	
 	\bibitem{klostermeyer2009eternal} \textsc{W. F. Klostermeyer and G. MacGillivray}. Eternal dominating sets in graphs. \textit{J. Combin. Math. Combin. Comput.} 68 (2009), 97–111. 
 	
 	\bibitem{klostermeyer2015domination} \textsc{W. F. Klostermeyer and C. M. Mynhardt}. Domination, eternal domination and clique covering. \textit{Discuss. Math. Graph Theory} 35 (2015), no. 2, 283–300. 
 	
 	\bibitem{klostermeyer2016protecting} \textsc{W. F. Klostermeyer and C. M. Mynhardt}. Protecting a graph with mobile guards. \textit{Appl. Anal. Discrete Math.} 10 (2016), no. 1, 1–29. 
 	
	\bibitem{klostermeyer2020eternal} \textsc{W. F. Klostermeyer and C. M. Mynhardt}. Eternal and secure domination in graphs. \textit{Topics in domination in graphs, Dev. Math.} 64 (2020), 445–478, Springer, Cham.
		
	\bibitem{lovasz1972characterization} \textsc{L. Lov{\'a}sz}. A characterization of perfect graphs. \textit{J. Combin. Theory Ser. B} 13 (1972), 95–98. 
 	
	\bibitem{lovasz1972normal} \textsc{L. Lov{\'a}sz}. Normal hypergraphs and the perfect graph conjecture. \textit{Discrete Math.} 2 (1972), no. 3, 253–267.  	
	
	\bibitem{mckay2014practical} \textsc{B. D. McKay and A. Piperno}. Practical graph isomorphism, II. \textit{J. Symbolic Comput.} 60 (2014), 94–112.
	
	\bibitem{mycielski1955coloriage} \textsc{J. Mycielski}. Sur le coloriage des graphes. \textit{Colloq. Math.} 3 (1955), 161–162.  
 	
	\end{thebibliography}
\end{document}